%%%%%%%%%%%%%%%%%%%%%%%%%%%%%%%%%%%%%%%%%%%%%%%%%%%%%%%%%%%%%%%%%%%%%%%%%
% Homology Smale-Barden manifolds with K-contact and Sasakian structures
%   V. Mu\~{n}oz, J-A. Rojo, A. Tralle
%%%%%%%%%%%%%%%%%%%%%%%%%%%%%%%%%%%%%%%%%%%%%%%%%%%%%%%%%%%%%%%%%%%%%%%%%
%%%%%%%%%%%%%%%%%%%%%%%%%%%%%%%%%%%%%%%%%%%%%%%%%%%%%%%%%%%%%%%%%%%%%%

\documentclass[12pt,reqno]{amsart}
\usepackage{latexsym,amsmath,amsfonts,amscd,amssymb}
\usepackage{graphics}

\setlength{\oddsidemargin}{15pt} \setlength{\evensidemargin}{15pt}
\setlength{\textwidth}{420pt} \setlength{\textheight}{630pt}
\setlength{\topmargin}{0pt}

\setlength{\parskip}{.15cm} \setlength{\baselineskip}{.5cm}
\usepackage{tikz-cd}

\newcommand{\inc}{\hookrightarrow}
\newcommand{\la}{\langle}
\newcommand{\ra}{\rangle}
\newcommand{\x}{\times}
\newcommand{\ox}{\otimes}
\newcommand{\too}{\longrightarrow}
\newcommand{\bd}{\partial}

\newcommand{\SC}{{\mathcal{C}}}
\newcommand{\SD}{{\mathcal{D}}}

\newcommand{\ZZ}{\mathbb{Z}}
\newcommand{\CC}{\mathbb{C}}
\newcommand{\CP}{\mathbb{C}P}
\newcommand{\RR}{\mathbb{R}}
\newcommand{\TT}{\mathbb{T}}
\newcommand{\QQ}{\mathbb{Q}}
\newcommand{\id}{\operatorname{id}}

\newcommand{\U}{\operatorname{U}}
\newcommand{\SO}{\operatorname{SO}}
\newcommand{\GL}{\operatorname{GL}}

\DeclareMathOperator{\lcm}{lcm}

\DeclareMathOperator{\Id}{Id}

\DeclareMathOperator{\End}{End}

\newcommand{\cO}{\mathcal O}
\newcommand{\PP}{\mathbb P}

\newtheorem{theorem}{Theorem}
\newtheorem{proposition}[theorem]{Proposition}
\newtheorem{lemma}[theorem]{Lemma}
\newtheorem{definition}[theorem]{Definition}

\newtheorem{corollary}[theorem]{Corollary}
\newtheorem{remark}[theorem]{Remark}

\title[Homology Smale-Barden which are K-contact but not Sasakian]{Homology Smale-Barden manifolds 
with K-contact but not Sasakian structures}

\author[V. Mu\~{n}oz]{Vicente Mu\~{n}oz}
\address{Departamento de Algebra, Geometr\'{\i}a y Topolog\'{\i}a, Universidad de M\'alaga,
Campus de Teatinos, s/n, 29071 Malaga, Spain}
%\address{Facultad de Ciencias Matem\'aticas, Universidad
%Complutense de Madrid, Plaza de Ciencias 3, 28040 Madrid, Spain}
%\address{Instituto de Ciencias Matem\'aticas (CSIC-UAM-UC3M-UCM),
%C/ Nicol\'as Cabrera 15, 28049 Madrid, Spain}
\email{vicente.munoz@uma.es}

\author[J.A. Rojo]{Juan Angel  Rojo}
\address{Facultad de Ciencias Matem\'aticas, Universidad
Complutense de Madrid, Plaza de Ciencias 3, 28040 Madrid, Spain}
\address{Instituto de Ciencias Matem\'aticas (CSIC-UAM-UC3M-UCM),
C/ Nicol\'as Cabrera 15, 28049 Madrid, Spain}
\email{jarcarulli@ucm.es}

\author[A. Tralle]{Aleksy Tralle}
\address{Department of Mathematics and Computer Science, University of Warmia
and Mazury, S\l\/oneczna 54, 10-710, Olsztyn, Poland}
\email{tralle@matman.uwm.edu.pl}

\subjclass[2010]{53C25, 53D35, 57R17, 14J25}

\keywords{Sasakian, K-contact, Seifert bundle, Smale-Barden manifold}

\begin{document}

\begin{abstract}
Koll\'ar has found  subtle obstructions to the existence of Sasakian structures on $5$-dimensional manifolds. 
In the present article we develop methods of using these obstructions to distinguish K-contact manifolds from 
Sasakian ones. In particular, we find the first example of a closed $5$-manifold $M$ with $H_1(M,\ZZ)=0$ which is 
K-contact but which carries no semi-regular Sasakian structures. 
\end{abstract}

\maketitle

%%%%%%%%%%%%%%%%
\section{Introduction}\label{sec:1}
%%%%%%%%%%%%%%%%

Sasakian geometry has become an important and active subject, especially after the appearance of
the fundamental treatise of Boyer and Galicki \cite{BG}. Chapter 7 of this
book contains an extended discussion of the topological problems in the theory of Sasakian, and, more generally,
K-contact manifolds. These are odd-dimensional analogues to K\"ahler and symplectic manifolds, respectively. 

The precise definition is as follows.
Let $(M,\eta)$ be a co-oriented contact manifold with a contact form
$\eta\in \Omega^1(M)$, that is $\eta\wedge (d\eta)^n>0$ everywhere, with $\dim M=2n+1$. 
We say that $(M,\eta)$ is {\em K-contact} if there is an
endomorphism $\Phi$ of $TM$ such that:
 \begin{itemize}
\item $\Phi^2=-\Id + \xi\otimes\eta$, where $\xi$ is the Reeb
vector field of $\eta$ (that is $i_\xi \eta=1$, $i_\xi (d\eta)=0$),
\item the contact form $\eta$ is compatible with $\Phi$ in the sense that
$d\eta (\Phi X,\Phi Y)\,=\,d\eta (X,Y)$, for all vector fields $X,Y$,
\item $d\eta (\Phi X,X)>0$ for all nonzero $X\in \ker \eta$, and
\item the Reeb field $\xi$ is Killing with respect to the
Riemannian metric defined by the formula
 $g(X,Y)\,=\,d\eta (\Phi X,Y)+\eta (X)\eta(Y)$.
\end{itemize}
In other words, the endomorphism $\Phi$ defines a
complex structure on $\SD=\ker \eta$ compatible with $d\eta$, hence
$\Phi$ is orthogonal with respect to the metric
$g|_\SD$. By definition, the Reeb vector
field $\xi$ is orthogonal to $\ker \eta$, and it is a 
Killing vector field.

Let $(M,\eta,g,\Phi)$ be a K-contact manifold. Consider the contact cone as the Riemannian manifold
$C(M)=(M\times\mathbb{R}^{>0},t^2g+dt^2)$.
One defines the almost complex structure $I$ on $C(M)$ by:
 \begin{itemize}
\item $I(X)=\Phi(X)$ on $\ker\eta$,
\item $I(\xi)=t{\partial\over\partial t},\,I(t{\partial\over\partial t})=-\xi$, for the Killing vector field $\xi$ of $\eta$.
\end{itemize}
We say that $(M,\eta,\Phi,g,I)$ is  {\it Sasakian} if $I$ is integrable.
Thus, by definition, any Sasakian manifold is K-contact.

There is much interest on constructing K-contact manifolds which do not admit Sasakian structures.
The odd Betti numbers up to dimension $n$ of Sasakian $2n+1$-manifolds must be even. 
The parity of $b_1$ was used to produce the first examples of K-contact manifolds with no Sasakian structure
\cite[example 7.4.16]{BG}. More refined tools are needed in the case of even Betti numbers. The 
cohomology algebra of a Sasakian manifold satisfies a hard Lefschetz property \cite{CNY}. Using
it examples of K-contact non-Sasakian manifolds are produced in \cite{CNMY} in dimensions $5$ and $7$. These
examples are nilmanifolds with even Betti numbers, so in particular they are not simply connected.

The fundamental group can also be used to construct K-contact non-Sasakian manifolds.
Fundamental groups of Sasakian manifolds are called Sasaki groups, and satisfy
strong restrictions. Using this it is possible to construct (non-simply connected) compact 
manifolds which are K-contact but not Sasakian \cite{Chen}. Also it has been used to
provide an example of a solvmanifold of dimension $5$ which satisfies the hard Lefschetz 
property and which is K-contact and not Sasakian \cite{CNMY2}.

When one moves to the case of simply connected manifolds, K-contact non-Sasakian 
examples of any dimension $\geq 9$ were constructed in \cite{HT} using the evenness of the third Betti
number of a compact Sasakian manifold. Alternatively, using the hard Lefschetz property
for Sasakian manifolds there are examples \cite{Lin} of simply connected K-contact non-Sasakian
manifolds of any dimension $\geq 9$.

In \cite{Tievsky} and in \cite{BFMT} the rational homotopy type of Sasakian manifolds is studied.
In \cite{BFMT} it is proved that all higher order Massey products for simply connected Sasakian manifolds vanish,
although there are Sasakian manifolds with non-vanishing triple Massey products.
This yields examples of simply connected K-contact non-Sasakian manifolds in dimensions $\geq 17$. 
However, Massey products are not suitable for the analysis of lower dimensional manifolds.

The problem of the existence of simply connected K-contact non-Sasakian compact manifolds 
(open problem 7.4.1 in \cite{BG}) is still open in dimension $5$. It was solved for dimensions $\geq 9$  
in \cite{CNY,CNMY,HT} and for dimension $7$ in \cite{MT} by a combination of various techniques 
based on the homotopy theory and symplectic geometry. In the least possible dimension the problem appears to 
be much more difficult. Here one
has to use the arguments of \cite{Kollar} which give subtle obstructions associated to the
classification of K\"ahler surfaces.
By definition, a simply connected compact oriented $5$-manifold is called a {\it Smale-Barden manifold}. 
These manifolds are classified topologically by $H_2(M,\mathbb{Z})$ and the second Stiefel-Whitney class. 
Chapter 10 of the book by Boyer and Galicki is devoted to a description of some Smale-Barden manifolds which 
carry Sasakian structures. The following problem is still open (open problem 10.2.1 in \cite{BG}). 

{\it Do there exist Smale-Barden manifolds which carry K-contact but do not carry Sasakian structures?}

In \cite{Kim} it is claimed a solution of this question, but unfortunately the main result of that paper is flawed,
as we explain in Remark \ref{rem:Kim}.

In the present paper we make the first step towards a positive answer for the above question.
A {\it homology Smale-Barden manifold} is a compact $5$-dimensional manifold with $H_1(M,\mathbb{Z})=0$. 
A Sasakian structure is regular if the leaves of the Reeb flow are a folation by circles with the
structure of a circle bundle over a smooth manifold. The Sasakian structure is quasi-regular if the foliation
is a Seifert circle bundle over a (cyclic) orbifold. It is semi-regular if this foliation has only locus of non-trivial
isotropy of codimension $2$, that is, if the base orbifold is a topological manifold. Any manifold
admitting a Sasakian structure has also a quasi-regular Sasakian structure. Semi-regularity is 
only a small extra requirement. With this notions, our main result is:

\begin{theorem}\label{thm:main} 
There exists a homology Smale-Barden manifold which admits a semi-regular K-contact structure but which 
does not carry any semi-regular Sasakian structure.
\end{theorem}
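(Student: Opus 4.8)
The plan is to realize $M$ as the total space of a Seifert $S^1$-bundle over a symplectic $4$-orbifold whose underlying space is a smooth manifold, and to obstruct the Sasakian case via Kollár's constraints on Seifert presentations over algebraic surfaces. First I would recall the orbifold dictionary. By the Boyer--Galicki correspondence, a closed $5$-manifold $M$ carries a semi-regular K-contact structure exactly when $M$ is the total space of a Seifert $S^1$-bundle $\pi\colon M\to X$ with $X$ a closed $4$-orbifold whose underlying space $|X|$ is a smooth manifold, whose branch locus is a disjoint union of embedded surfaces $D_i\subset |X|$ with multiplicities $m_i\geq 2$, and whose (rational) Euler class lies in $H^2_{orb}(X;\RR)$ and is represented by an orbifold symplectic form; and $M$ carries a semi-regular Sasakian structure exactly when one can take $|X|$ to be a smooth projective surface, the $D_i$ smooth algebraic curves, and the Euler class a Hodge (ample) class. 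It therefore suffices to exhibit one $M$ admitting a presentation of the first kind but none of the second.

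Next I would record Kollár's homology computation. For a semi-regular Seifert bundle $\pi\colon M\to X$ with $H_1(|X|;\ZZ)=0$ and $H_1(M;\ZZ)=0$, the Gysin sequence of $\pi$ (together with Kollár's analysis of the torsion coming from the branch divisors) yields
\[
H_2(M;\ZZ)\;\cong\;\ZZ^{\,b_2(|X|)-1}\;\oplus\;\bigoplus_i\big(\ZZ/m_i\big)^{2g(D_i)},
\]
so that $H_2(M;\ZZ)_{\mathrm{tors}}\cong\bigoplus_i(\ZZ/m_i)^{2g(D_i)}$, where $g(D_i)$ is the genus of $D_i$; I would cite this as Kollár's structure theorem. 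In the Sasakian case the $D_i$ are smooth disjoint algebraic curves on a fixed smooth projective surface, which severely restricts the possible pairs $(m_i,g(D_i))$.

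With this in hand the construction proceeds as follows. I would build a symplectic $4$-manifold $X$ whose underlying space is a genuine smooth manifold (for instance obtained from a suitable Kähler surface by a Gompf symplectic sum or by controlled blow-ups that break integrability while keeping $H_*$ and the relevant classes under control), carrying a disjoint collection of embedded symplectic surfaces $D_i$ of prescribed genera and pairwise coprime multiplicities $m_i$, together with a positive (symplectic) rational Euler class. Choosing the Seifert invariants so that the presentation of $H_1(M)$ in terms of the Seifert data vanishes produces a semi-regular K-contact $M$ with a prescribed value $b_2(M)=b_2(|X|)-1$ and a prescribed torsion group $T=\bigoplus_i(\ZZ/m_i)^{2g(D_i)}$. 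The numerical data are to be chosen precisely so that $T$ cannot be written as $\bigoplus_j(\ZZ/m'_j)^{2g'_j}$ realized by disjoint smooth curves $D'_j$ on any smooth projective surface $X'$ with $b_2(X')=b_2(M)+1$.

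The main obstacle is exactly this last nonexistence statement. Suppose $M$ admitted a semi-regular Sasakian structure, over a smooth projective surface $X'$ with disjoint smooth branch curves $D'_j$ of multiplicities $m'_j$. Matching $H_2(M;\ZZ)_{\mathrm{tors}}$ forces, up to reindexing, an isomorphism $T\cong\bigoplus_j(\ZZ/m'_j)^{2g'_j}$; since the exponents are even and the decomposition into $p$-primary parts is rigid, the shape of $T$ together with the coprimality pins down how many curves of each genus must occur. A contradiction then comes from surface geography: the $D'_j$ are pairwise disjoint, so adjunction gives $2g'_j-2=(K_{X'}+D'_j)\cdot D'_j$ and the classes $[D'_j]$ span a sublattice of $H^2(X';\ZZ)$ on which the intersection form is diagonal, bounding their number by $b_2(X')=b_2(M)+1$; and since fixing $b_2(X')$ together with the Hodge index theorem, Noether's inequality and the Bogomolov--Miyaoka--Yau inequality bounds all Chern numbers of $X'$, it bounds $\sum_j g'_j$, which the torsion $T$ is designed to exceed. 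Care is needed to verify that the symplectic side is genuinely realizable — positivity of the Euler class and the vanishing $H_1(M)=0$ — which is routine but must be checked, and to note why "semi-regular" is essential: for a merely quasi-regular Sasakian structure the base is a cyclic orbifold surface and Kollár's formula picks up extra torsion from the orbifold points, so the bound above no longer applies; this is consistent with the fact, recalled above, that every Sasakian manifold admits a quasi-regular Sasakian structure.
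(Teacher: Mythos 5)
Your overall architecture coincides with the paper's: build a Seifert bundle over a symplectic $4$-manifold carrying many disjoint symplectic surfaces with pairwise coprime multiplicities (powers $p^i$ of a fixed prime), use Koll\'ar's homology formula to encode the genera and the disjointness in $H_2(M;\ZZ)_{\mathrm{tors}}$, and then argue that no smooth K\"ahler surface can carry the forced configuration of disjoint complex curves. The first two steps are essentially Corollary \ref{cor:last} and Corollary \ref{cor:Kollar}, and your remark on why semi-regularity is needed (extra torsion from orbifold points in the merely quasi-regular case) matches the paper's own caveat.

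The genuine gap is in the final obstruction step. You claim that fixing $b_2(X')$ and invoking the Hodge index theorem, Noether's inequality and Bogomolov--Miyaoka--Yau ``bounds all Chern numbers of $X'$'' and hence bounds $\sum_j g'_j$. Bounding $K_{X'}^2$ and $c_2(X')$ does not bound the genera of disjoint curves spanning $H^2(X';\QQ)$: by adjunction a curve $D$ with $D^2=-m<0$ has $2g-2=K\cdot D-m$, and in the identity $K_{X'}^2=\sum_i (2g_i-2-D_i^2)^2/D_i^2$ the single positive term $(2g_1-2-m_1)^2/m_1$ (coming from the unique curve of positive self-intersection allowed by Hodge index) can absorb arbitrarily large negative contributions by letting $m_1$ grow, so geography alone yields no contradiction --- it only forces $m_1=D_1^2\geq 2g_1+3$, and even that requires $g_1\leq 3$. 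The paper's actual argument (Theorem \ref{thm:4g+5}) must go much further: it uses the section of $\cO(D_1)|_{D_1}$ to build a Lefschetz pencil, blows up the base locus to get a genus-$g$ fibration over $\PP^1$ containing all the remaining $D_i$ in fibers, verifies relative minimality, and then applies Xiao's slope inequality $4-4/g\leq K^2_{\tilde S/\PP^1}/\chi_\pi$ to conclude $b\leq 2g+3$, whence $36\leq 9$, a contradiction. This is also why the symplectic construction in Section \ref{sec:5} is so delicate: the genera must all lie in $\{1,2,3\}$ with at least two of them $\geq 2$, a constraint your proposal does not identify and which your ``design $T$ to exceed the bound'' heuristic would not produce. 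As written, your final step fails and the missing idea is precisely the fibration/slope-inequality argument.
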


In order to put our result into a general context, it is worth recalling Koll\'ar's obstructions 
to Sasakian structures \cite{Kollar}. If a $5$-dimensional manifold $M$  has a Sasakian structure, then it has
a quasi-regular Sasakian structure. Then it is a Seifert bundle structure over a K\"ahler orbifold
$X$ with isotropy locus a collection of complex curves. If $H_1(M,\ZZ)=0$, then these curves span $H_2(X,\QQ)$. 
The second homology $H_2(M,\ZZ)$ allows to 
recover the genus of these curves,  if they are disjoint. In \cite{Kollar}, $5$-manifolds $M$ are constructed
which are Seifert bundles over $4$-orbifolds $X$ with isotropy being surfaces not satisfying the adjunction equality,
hence $X$ cannot be K\"ahler and $M$ cannot be Sasakian. 

To produce K-contact $5$-dimensional manifolds we need to produce symplectic $4$-dimensional orbifolds with 
suitable symplectic surfaces spanning the second homology. Such K-contact $5$-manifold cannot admit a Sasakian structure if we prove
that such configuration of surfaces (genus, disjointness condition and spanning the second homology) cannot
be produced for a K\"ahler orbifold with complex curves. We propose the following conjecture:

{\it 
  There does not exist a K\"ahler manifold or a K\"ahler orbifold $X$ with $b_1=0$ and with
 %$b_2\geq 2$ having $b_2$ 
  disjoint complex curves spanning $H_2(X,\QQ)$, all of genus $g\geq 1$.
}

We give the first result in this direction (Theorem \ref{thm:4g+5}). Our construction of a K-contact $5$-manifold
which does not admit a Sasakian structure relies on producing a symplectic $4$-manifold with disjoint
symplectic surfaces spanning the second homology,
of genus $g\geq 1$, but also with genus $g\leq 3$, to fit with our needs in Theorem \ref{thm:4g+5}.
This is the content of the delicate construction in Section \ref{sec:5}.

Finally, our examples are K-contact manifolds which are quasi-regular 
but do not admit a regular K-contact structure (Remark \ref{rem:suggestion}). There are 
examples of Sasakian quasi-regular manifolds in \cite{BGe} which do not admit
regular structures. These are spin Smale-Barden manifolds with $H_2(X,\ZZ)$ torsion and 
non-zero, and by Remark \ref{rem:e2}, they can not admit regular Sasakian (or K-contact)
structures.

%To the awareness of the authors, this is the first construction of a K-contact manifold which is quasi-regular 
%and does not admit a regular K-contact structure.
%Previous explicit constructions of quasirregular K-contact manifolds also admit regular K-contact structures for two reasons. 
%Most of these manifolds are given by circle bundles over smooth symplectic manifolds (and hence they are regular).
%Other examples are quasi-regular but the manifolds also admit regular K-contact structures 
%(orbi-bundles over weighted projective spaces whose total space is the sphere).

\medskip

\noindent\textbf{Acknowledgements.} We are grateful Jaume Amor\'os and Charles Boyer for useful conversations.
We also thank Dominic Joyce, Simon Donaldson and Ivan Smith for their comments.
Thanks also to the referee for a careful reading and correcting an erroneous statement.
The first and second authors were partially supported by %Project MICINN (Spain) % MTM2010-17389. 
Project MINECO (Spain) MTM2015-63612-P.
The second author acknowledges financial support by the International PhD program La Caixa-Severo Ochoa.
%The third author was partly supported by the ESF Research Networking Programme CAST (Contact and Symplectic Topology).

%%%%%%%%%%%%%%%%%%%%%%%%%%%%%%%%%%%%%%%%%%%%%%%%%%%%%%%%%%%%%%%
\section{$4$-dimensional cyclic orbifolds}\label{sec:2}
%%%%%%%%%%%%%%%%%%%%%%%%%%%%%%%%%%%%%%%%%%%%%%%%%%%%%%%%%%%%%%%

An $n$-dimensional (differentiable) orbifold is a space endowed with an atlas $\{(\tilde U_{\alpha},\phi_{\alpha},\Gamma_{\alpha})\}$, 
where $\tilde U_\alpha\subset\RR^n$, $\Gamma_{\alpha} < \GL(\RR^n)$ is a finite group acting linearly, and 
$\phi_{\alpha}:\tilde U_{\alpha} \to U_{\alpha} \subset X$ is a $\Gamma_{\alpha}$-invariant map which induces a 
homeomorphism $\tilde U_{\alpha}/\Gamma_\alpha \cong U_{\alpha}$ onto an open set $U_\alpha$ of $X$. 
There is also a condition of compatibility 
of charts: for each point $p \in U_{\alpha} \cap U_{\beta}$ there is some $U_\gamma\subset 
U_{\alpha} \cap U_{\beta}$ with $p \in U_\gamma$, monomorphisms
$\imath_{\gamma\alpha}: \Gamma_\gamma\inc \Gamma_\alpha$,
$\imath_{\gamma\beta}: \Gamma_\gamma\inc \Gamma_\beta$, and 
open embeddings $f_{\gamma\alpha}:\tilde U_\gamma \to \tilde U_\alpha$, 
$f_{\gamma\beta}:\tilde U_\gamma \to \tilde U_\beta$, which satisfy $\imath_{\gamma\alpha}(g)(f_{\gamma\alpha}(x))=
f_{\gamma\alpha}(g(x))$ and $\imath_{\gamma\beta}(g)(f_{\gamma\beta}(x))=
f_{\gamma\beta}(g(x))$, for $g\in \Gamma_\gamma$. 

As the groups $\Gamma_\alpha$ are finite, we can arrange (after a suitable conjugation) that $\Gamma_\alpha <
\operatorname{O}(n)$. The orbifold is {\em orientable} if all $\Gamma_\alpha < \SO(n)$ and the embedings $f_{\gamma\alpha}$ preserve orientation. 
Note that for any point $x\in X$, we can arrange always a chart $\phi:\tilde U\to U$ with $\tilde U\subset \RR^n$ is a 
ball centered at $0$ and $\phi(0)=x$, and $\tilde U/\Gamma\cong U$, with $\Gamma<\SO(n)$. In this case, we call $\Gamma$
the {\em isotropy group} at $x$. A {\em cyclic} orbifold has all isotropy groups which are cyclic groups $\Gamma\cong \ZZ_m$,
and $m=m(x)$ is the order of the isotropy at $x$. In this paper, all we shall work exclusively with $4$-dimensional cyclic oriented
orbifolds, which we shall address just as orbifolds. 

Let $X$ be such an orbifold. Take $x\in X$ and a chart $\phi:\tilde U\to U$ around $x$. Let 
$\Gamma=\mathbb{Z}_m<\SO(4)$ be the isotropy group. Then $U$ is homeomorhic to an open neighbourhood of 
$0\in \mathbb{R}^4/\mathbb{Z}_m$. A matrix of finite order in $\SO(4)$ is conjugate to a diagonal matrix in $\U(2)$ of
the type $(\exp(2\pi i j_1/m),\exp(2\pi i j_2/m))=(\xi^{j_1},\xi^{j_2})$, where $\xi=e^{2\pi i/m}$. Therefore we can
suppose that $\tilde U\subset \CC^2$ and $\Gamma=\ZZ_m=\la \xi\ra\subset \U(2)$ acts on $\tilde U$ as
 \begin{equation} \label{action}
   \xi \cdot (z_1,z_2) := (\xi^{j_1} z_1,\xi^{j_2}z_2).
 \end{equation}
Here $j_1,j_2$ are defined modulo $m$.
As the action is effective, we have $\gcd(j_1,j_2,m)=1$. Let us list the possible local models for an action given by the
formula (\ref{action}). 

We call $x\in X$ a regular point if $m(x)=1$, otherwise we call it a (non-trivial) isotropy point. We say that $D\subset X$ is an
isotropy surface of multiplicity $m$ if $D$ is closed, and there is a dense open subset $D^\circ\subset D$ which is a surface
and $m(x)=m$, for $x\in D^\circ$.
From the topological point of view, we call $x\in X$ a smooth point if a neighbourhood of $x$ is 
homeomorphic to a ball in $\RR^4$, and singular otherwise. Clearly a regular point is smooth, but not conversely as we
shall see next.

\begin{proposition} \label{prop:models}
Let $X$ be a (cyclic, oriented, $4$-dimensional) orbifold and $x\in X$ with local model $\CC^2/\ZZ_m$. Then there
are at most two isotropy surfaces $D_i$, with multiplicity $m_i | m$, through $x$. If there are two such
surfaces $D_i,D_j$, then they intersect transversely and $\gcd(m_i,m_j)=1$. The fundamental group of the link of
$x$ has order $d$ with $(\prod m_i)  d=m$, the product over all $m_i$ such that $x\in D_i$. 
So the point is smooth if and only if\/ $\prod m_i=m$.
\end{proposition}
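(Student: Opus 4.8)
The plan is to work entirely inside the local chart $U=\CC^2/\ZZ_m$ with the diagonal action (\ref{action}), $\xi\cdot(z_1,z_2)=(\xi^{j_1}z_1,\xi^{j_2}z_2)$, $\gcd(j_1,j_2,m)=1$, and to reduce every assertion to an elementary computation with roots of unity.

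First I would locate the isotropy. If $z_1\neq 0$ and $z_2\neq 0$, then an element $\xi^k$ fixing $(z_1,z_2)$ satisfies $\xi^{kj_1}=\xi^{kj_2}=1$, hence $m\mid k\gcd(j_1,j_2)$, which together with $\gcd(j_1,j_2,m)=1$ forces $\xi^k=1$. So all points with non-trivial isotropy lie on the two coordinate axes $D_1=\{z_1=0\}$ and $D_2=\{z_2=0\}$, giving at most two isotropy surfaces through $x$. On the dense open set $\{z_2\neq 0\}$ of $D_1$ the isotropy group is $\{\xi^k:\xi^{kj_2}=1\}$, cyclic of order $m_1:=\gcd(j_2,m)$; symmetrically $D_2$ has multiplicity $m_2:=\gcd(j_1,m)$, and both divide $m$. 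Note that $m_i>1$ exactly when $D_i$ is a genuine isotropy surface. If $m_1,m_2>1$ the surfaces are the coordinate axes, hence meet transversally at $x$; and a prime dividing $\gcd(m_1,m_2)$ would divide $j_1$, $j_2$ and $m$, contradicting effectiveness, so $\gcd(m_1,m_2)=1$.

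For the link I would factor the action. Let $H\leq\ZZ_m$ be the subgroup generated by $\{\xi^k:\xi^{kj_1}=1\}$ and $\{\xi^k:\xi^{kj_2}=1\}$, that is, by all elements having a fixed point on the unit sphere $S^3\subset\CC^2$. A short gcd computation shows that these two cyclic subgroups have coprime orders $m_2$ and $m_1$ and trivial intersection, the first acting by primitive $m_2$-th roots of unity on $z_2$ alone and the second by primitive $m_1$-th roots on $z_1$ alone; hence $|H|=m_1m_2$ and the polynomial map $(z_1,z_2)\mapsto(w_1,w_2):=(z_1^{m_1},z_2^{m_2})$ has the $H$-orbits as fibres, so it induces a homeomorphism $\CC^2/H\cong\CC^2$. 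In the coordinates $(w_1,w_2)$ the quotient group $\ZZ_m/H$ has order $d:=m/(m_1m_2)$ and, by another order computation, acts linearly by $(w_1,w_2)\mapsto(\zeta^{a}w_1,\zeta^{b}w_2)$ with $\zeta=e^{2\pi i/d}$ and $\gcd(a,d)=\gcd(b,d)=1$, hence freely on $S^3$. Therefore $\CC^2/\ZZ_m$ is homeomorphic to the cone on the lens space $L(d;a,b)$; its link $L$ is $L(d;a,b)$, and $\pi_1(L)\cong\ZZ_d$ has order $d$. Since $m_i=1$ precisely when $D_i$ is not an isotropy surface, this reads $d\cdot\prod m_i=m$, the product over those $i$ with $x\in D_i$. (One may instead invoke Armstrong's theorem on fundamental groups of orbit spaces to identify $\pi_1(L)$ with $\ZZ_m/H$ at once.)

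Finally, via the equivariant radial retraction a punctured neighbourhood of $x$ deformation retracts onto $L$. If $d>1$ then $\pi_1(L)=\ZZ_d$ is non-trivial, whereas a punctured ball in $\RR^4$ is simply connected, so $x$ is not smooth. If $d=1$ then $\CC^2/\ZZ_m\cong\CC^2/\ZZ_1=\CC^2\cong\RR^4$ by the factorization above, so $x$ is smooth. Hence $x$ is smooth if and only if $d=1$, that is, if and only if $\prod m_i=m$. I expect the only delicate part to be the bookkeeping in the factorization step --- checking that $H$ splits as the asserted direct product and that the residual action is by primitive $d$-th roots of unity --- which amounts to repeated use of $m_1=\gcd(j_2,m)$, $m_2=\gcd(j_1,m)$ and $\gcd(j_1,j_2,m)=1$; the topological input (equivariant retraction, cone structure, $\pi_1$ of a lens space) is standard.
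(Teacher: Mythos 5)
Your proof is correct and follows essentially the same route as the paper's: identify the non-regular locus with the two coordinate axes, read off the multiplicities as $\gcd$'s, quotient first by the subgroup generated by the two stabilizers to get $\CC^2$ again via $(z_1,z_2)\mapsto(z_1^{m_1},z_2^{m_2})$, and then observe that the residual $\ZZ_d$-action is free on $S^3$, so the link is a lens space with $\pi_1$ of order $d$. The only difference is presentational: you run the two-step quotient uniformly, whereas the paper splits the same computation into five cases according to which of $m_1,m_2,d$ equal $1$.
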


\begin{proof}
For an action given by (\ref{action}), we 
set $m_1:=\gcd(j_1,m)$, $m_2:=\gcd(j_2,m)$. Note that $\gcd(m_1,m_2)=1$, so we can
write $m_1m_2 d=m$, for some integer $d$. Put $j_1=m_1 e_1$, $j_2=m_2 e_2$, $m=m_1 c_1=m_2 c_2$.
Clearly $c_1=m_2d$ and $c_2=m_1d$ and $d=\gcd(c_1,c_2)$.

We have five cases:

\begin{enumerate}
\item[(a)] $x$ is an isolated singular point. This corresponds to $m_1=m_2=1$. As $\gcd(j_1,m)=\gcd(j_2,m)=1$, 
the only fixed point is $(0,0)$ since any power of $\xi$ rotates both copies of $\mathbb{C}$ non trivially. 
In this case the quotient space is singular, and the singularity is a cone over a lens space ${S}^3/\mathbb{Z}_m$,
which is the link of the origin. Note that $d=m$.

\item[(b)] Two isotropy surfaces and $x$ is a smooth point, $m_1,m_2>1$, $d=1$. 
Let us see that the action is equivalent to the product of one action on each factor $\mathbb{C}$. 
In this case $c_2=m_1$ and $c_1=m_2$. So $\gcd (c_1,c_2)=1$ and $m=c_1c_2$. The action is given by
$\xi \cdot (z_1,z_2) := (\exp(2\pi i e_1/c_1)z_1,\exp(2\pi i e_2/c_2)z_2)$. We see that 
 $$
 \begin{array}{ccc}
 \xi^{c_1}\cdot (z_1,z_2)=(z_1, \exp(2 \pi i c_1 e_2/c_2) z_2),  \\
 \xi^{c_2}\cdot (z_1,z_2)=(\exp(2 \pi i c_2 e_1/c_1) z_1, z_2),
 \end{array} 
 $$
so the surfaces $D_1=\{(z_1,0)\}$ and $D_2=\{(0,z_2)\}$ have isotropy groups $\la \xi^{c_1}\ra=\mathbb{Z}_{m_1}$ 
and $\la\xi^{c_2}\ra=\mathbb{Z}_{m_2}$, respectively. In this case $m=m_1m_2$, $d=1$.

Note that $\ZZ_m=\la \xi^{c_1}\ra \times \la\xi^{c_2}\ra$ if and only if $d=\gcd(c_1,c_2)=1$. 
In this case the action of $\mathbb{Z}_m$ decomposes as the product of the actions of 
$\mathbb{Z}_{m_2}$ and $\mathbb{Z}_{m_1}$ on each of the factors $\CC$. The quotient space is
$\CC^2/\ZZ_m\cong \CC/\ZZ_{m_2} \x \CC/\ZZ_{m_1}$, which is homeomorphic to $\CC\x\CC$, and hence $x$ is a
smooth point (its link is $S^3$).

\item[(c)] Two isotropy surfaces intersect at $x$ and $x$ is a singular point. In this case $d=\gcd(c_1,c_2)>1$ and 
$m_1,m_2>1$. Now $\la\xi^{c_1},\xi^{c_2}\ra= \la\xi^{d}\ra= \mathbb{Z}_{m'}$ with $d\,m'=m$. 
As $m'=m_1m_2$, case (b) applies to the action of $\xi^d$ and the quotient space is
$\CC^2/\ZZ_{m'}\cong \CC/\ZZ_{m_2} \times \CC/\ZZ_{m_1}$, which is homemorphic to a ball in 
$\CC^2$ via the map $(z_1,z_2)\mapsto (w_1,w_2)=(z_1^{m_2},z_2^{m_1})$. The points of 
$D_1=\{(w_1,0)\}$ and $D_2=\{(0,w_2)\}$ define two surfaces intersecting transversely, and with multiplicities
$m_1,m_2$, respectively. 
 
Now $\xi$ acts on $\CC^2/\ZZ_{m'}$ by the formula $\xi \cdot(w_1,w_2)=(\xi^{m_2j_1} w_1,\xi^{m_1j_2}w_2)=
(\exp(2\pi ie_1/d) w_1, \exp(2\pi ie_2/d) w_2)$, where $\gcd(e_1,d)=\gcd(e_2,d)=1$. Therefore this
action falls into case (a). The quotient is therefore $\CC^2/\la \xi\ra \cong ( \CC/\ZZ_{m_2} \times \CC/\ZZ_{m_1})/\ZZ_d$,
the point $x$ has as link a lens space ${S}^3/\mathbb{Z}_d$, and the images of $D_1$ and $D_2$ are the points
with non-trivial isotropy, with multiplicities $m_1,m_2$, respectively.

\item[(d)]  One isotropy surface and $x$ is a smooth point. In this case $m_2=1$ and $m_1=m$. As $d=1$, this is basically
as case (b). The action is $\xi \cdot (z_1,z_2)=(z_1,\exp(2\pi j_2/m)  z_2)$. There is only one surface 
$D_1=\{(z_1,0)\}$ with non-trivial isotropy $m$, and all its points have the same 
isotropy. The quotient $\CC^2/\ZZ_m =\CC \x (\CC/\ZZ_m)$ is topologically smooth.

\item[(e)]  One isotropy surface and $x$ is a singular point. In this case $m_2=1$, $m_1d=m$ and $d>1$. This is 
basically as case (c). Now $c_2=m$ and $c_1=d$. Let $dm'=m$ so $m'=m_1$. The quotient space 
$\CC^2/\ZZ_{m'}\cong \CC \times \CC/\ZZ_{m_1}$ is homemorphic to a ball in 
$\CC^2$ and the points of $D_1=\{(z_1,0)\}$ define a surface with isotropy $m_1$.
Now for the quotient $\CC^2/\ZZ_m=(\CC \times (\CC/\ZZ_{m_1}))/\ZZ_d$, the image of $D_1$ consists of
points with isotropy $m_1$, except for the origin which has isotropy $m=m_1d$. The link around $x$ is
the lens space $S^3/\ZZ_d$, hence it is singular. The rest of the points of $D_1$ are smooth.

\end{enumerate}
\end{proof}

\begin{definition} \label{def:orb-smooth}
 We say that an orbifold $X$ is smooth if all its points are smooth. That is, all points of $X$
fall into cases (b) or (d) in Proposition \ref{prop:models}. This is equivalent to $X$ being a topological manifold.
\end{definition}

Note that in case (b), we can change the generator $\xi=e^{2\pi i/m}$ of $\ZZ_{m}$ to 
$\xi'=\xi^k$ for $k$ such that $k e_i \equiv 1 \pmod{m_i}$, $i=1,2$,
so that $\xi' \cdot (z_1,z_2)=(\exp(\frac{2 \pi i}{m_2})z_1, \exp(\frac{2 \pi i}{m_1})z_2)$. 
With this new generator, the action has model $\CC^2$ with the action 
$\xi\cdot (z_1,z_2)=(\xi^{m_1}z_1, \xi^{m_2} z_2)$, $\xi=e^{2\pi i/m}$. Similar remark applies to case (d).

Note that according to Definition  \ref{def:orb-smooth}, a smooth orbifold is not a smooth manifold. However,
there is a mechanism to produce a smooth orbifold from a smooth manifold. This is a standard
result, but we include the proof since we have not found it in the literature.

\begin{proposition} \label{prop:smooth->orb}
 Let $X$ be a smooth (oriented) $4$-manifold with embedded surfaces $D_i$ intersecting transversely, and 
coefficients $m_i>1$ such that
$\gcd(m_i,m_j)=1$ if $D_i$, $D_j$ intersect, then there is a smooth orbifold $X$ with isotropy surfaces $D_i$ of
multiplicities $m_i$.
\end{proposition}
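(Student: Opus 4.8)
The plan is to exhibit an explicit orbifold atlas on the topological space $X$, using three kinds of chart according to how many of the surfaces $D_i$ pass through a point. We assume, as we may (it holds for a configuration in general position, and can always be arranged by an arbitrarily small isotopy), that at most two of the $D_i$ meet at any point. Around $x\in X$ choose a coordinate ball $U\subset X$ so small that $U$ meets only the $D_i$ through $x$, with complex coordinates $(z_1,z_2)$ on $U$ in which $D_i\cap U=\{z_2=0\}$ if $x$ lies on exactly one surface $D_i$, and $D_i\cap U=\{z_2=0\}$, $D_j\cap U=\{z_1=0\}$ if $x$ lies on two surfaces $D_i,D_j$. Then take: the trivial chart $(U,\id,1)$ if $U$ meets no surface; the chart $\tilde U\subset\CC^2$ with $\Gamma=\ZZ_{m_i}=\la\xi\ra$ acting by $\xi\cdot(w_1,w_2)=(w_1,\xi w_2)$ and $\phi(w_1,w_2)=(w_1,w_2^{m_i})$ read in $(z_1,z_2)$, if $U$ meets only $D_i$ --- this is exactly the local model of case (d) of Proposition \ref{prop:models}; and the chart $\tilde U\subset\CC^2$ with $\Gamma=\ZZ_{m_im_j}$, which is cyclic because $\gcd(m_i,m_j)=1$, acting by the model of case (b) of Proposition \ref{prop:models} with $\{m_1,m_2\}=\{m_i,m_j\}$, if $U$ meets $D_i$ and $D_j$. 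In every case $\phi$ is a $\Gamma$-invariant branched covering inducing a homeomorphism $\tilde U/\Gamma\cong U$, and all the groups lie in $\U(2)\subset\SO(4)$. By construction every point of $X$ falls into case (b) or case (d) of Proposition \ref{prop:models}, so --- once the atlas is shown to be coherent --- Definition \ref{def:orb-smooth} gives that the resulting orbifold is smooth, with isotropy surfaces exactly the $D_i$ of prescribed multiplicities $m_i$.

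The substantive point is the compatibility of the charts. Given $p\in U_\alpha\cap U_\beta$, choose a small ball $U_\gamma\ni p$ inside $U_\alpha\cap U_\beta$ that meets only the surfaces through $p$; it carries one of the three models above, with $\Gamma_\gamma$ equal to $1$, $\ZZ_{m_i}$, or $\ZZ_{m_im_j}$. Because $U_\alpha$ (and $U_\beta$) was chosen to meet no surface other than those through its centre, every $D_k$ through $p$ passes through the centre of $U_\alpha$ as well, so $\Gamma_\gamma$ sits inside $\Gamma_\alpha$ (and inside $\Gamma_\beta$) as the obvious subgroup: this gives the monomorphisms $\imath_{\gamma\alpha},\imath_{\gamma\beta}$. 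For the embedding $f_{\gamma\alpha}\colon\tilde U_\gamma\to\tilde U_\alpha$, note that both $\phi_\gamma$ and $\phi_\alpha|_{\phi_\alpha^{-1}(U_\gamma)}$ present $U_\gamma$ as a branched cover with ramification locus the coordinate hyperplanes and branching orders the $m_k$ of the $D_k$ through $p$. Over the complement of the ramification these are honest covering maps, and since near $p$ they realize the \emph{same} subgroup (the cyclic one) of the fundamental group of the complement, the lifting criterion produces an open embedding of $\tilde U_\gamma$ minus its ramification onto a connected piece of $\tilde U_\alpha$ minus its ramification, compatible with the $\Gamma$-actions via $\imath_{\gamma\alpha}$; one then checks this extends to a smooth open embedding $f_{\gamma\alpha}$ across the coordinate hyperplanes, equivariance following by continuity. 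The map $f_{\gamma\beta}$ is obtained the same way.

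I expect the real difficulty to lie in this last extension together with the attendant bookkeeping: a lift $f_{\gamma\alpha}$ is determined only up to composition with an element of $\Gamma_\alpha$, so the various $f_{\gamma\alpha}$, $f_{\gamma\beta}$ and the refinement maps into common finer charts must be chosen consistently, and one must verify that the lifts extend \emph{smoothly} --- not merely continuously --- over the branch loci, which is where the monomial form of the model maps $\phi$ (and, at a double point, the coprimality $\gcd(m_i,m_j)=1$ that makes the two cyclic branchings combine into a single cyclic quotient) is essential. A cleaner route, which avoids choosing lifts chart by chart, is to build $X$ globally: take disjoint tubular neighbourhoods of the $D_i$, form the fibrewise $m_i$-th power cover of each on its oriented (hence complex) normal disk bundle, take iterated fibre products over the loci where these neighbourhoods overlap, and glue the result to the trivial cover over $X\setminus\bigcup D_i$; the universal property of the fibre products supplies the transition data functorially and one recovers the atlas above. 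Either way the topological space underlying the smooth orbifold is $X$ itself, so $X$ is a topological manifold, consistent with Definition \ref{def:orb-smooth}.
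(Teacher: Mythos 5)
Your overall strategy is the same as the paper's (local models of types (b)/(d) of Proposition \ref{prop:models}, glued into an atlas), and you correctly locate the crux: whether the lifted transition maps extend \emph{smoothly} across the branch locus. But with the charts as you set them up --- arbitrary smooth coordinates $(z_1,z_2)$ adapted to the $D_i$ --- this step genuinely fails, so the gap you flag is not mere bookkeeping. If two adapted coordinate systems around a point of $D_i$ are related by $(z_1,z_2)\mapsto (F(z_1,z_2),G(z_1,z_2))$ with $G$ vanishing on $\{z_2=0\}$, then by Hadamard's lemma $G=a\,z_2+b\,\bar z_2+O(|z_2|^2)$ with $b$ in general nonzero (the normal derivative of a diffeomorphism need not be complex-linear). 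The induced map on the $m$-fold branched covers sends $w_2$ to an $m$-th root of $a\,w_2^m+b\,\bar w_2^m+\cdots$, and already for $m=2$, $a=1$, $b=\epsilon\neq 0$ the function $(w_2^2+\epsilon\bar w_2^2)^{1/2}=r\,(e^{2i\theta}+\epsilon e^{-2i\theta})^{1/2}$ is homogeneous of degree one but not linear, hence not even differentiable at $w_2=0$. So the atlas you build is only a topological (indeed Lipschitz) orbifold atlas; the lifting-criterion argument produces the transition maps off the branch locus but cannot make them smooth across it.

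The paper avoids this by not allowing arbitrary adapted coordinates: it fixes a Riemannian metric (standard near the points of $D_i\cap D_j$) and builds every chart near $D_i$ from the exponential map of the normal bundle $N_{D_i}$. Then the changes of charts in the normal direction are fibrewise rotations $(w_1,w_2)\mapsto(\varphi(w_1),h(w_1)w_2)$ with $h$ valued in $S^1$ and depending only on the base variable, and these do lift smoothly and equivariantly to $(z_1,z_2)\mapsto(\varphi(z_1),h(z_1)^{1/m}z_2)$ after choosing an $m$-th root of $h$ on the (simply connected) chart domain. This is exactly your parenthetical ``cleaner route'' via the fibrewise $m_i$-th power cover of the oriented normal disc bundle; had you carried that out as the main argument rather than as a one-sentence aside, the proof would close. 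As written, the main line of your argument asserts a smooth extension that is false for the charts you chose.
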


\begin{proof}
We consider $X$ with its atlas as smooth manifold. We start by fixing a Riemannian metric such that in a neighbourhood of 
the (finitely many) points which are in the intersection of two of the $D_i$'s, it is standard, that is, 
for $x\in D_i\cap D_j$ there is a chart $f:B_{\epsilon}(0)\x B_{\epsilon}(0) \subset \RR^2\x\RR^2 \to U$, with $f(0,0)=x$,
$D_i\cap U=f(B_{\epsilon}(0) \x \{0\})$, $D_j\cap U=f(\{0\}\x B_{\epsilon}(0))$, 
and $g$ is the standard metric on $U$.

Now let $x\in X$ be a point. If $x$ does not lie in any $D_i$, take a smooth chart 
$f: B_\epsilon(0)\subset \RR^4 \to U$ to a neighbourhood 
$U$ of $x$ not touching any $D_i$. Then we consider the orbifold chart $( B_\epsilon(0),f,\{1\})$.

If $x$ lies in only one $D=D_i$ with $m=m_i$, take a chart as follows. 
Take a small neighbourhood $V\subset D$ of $x$, and by using coordinates we identify
$V\subset \RR^2$. Consider the exponential map
from the normal bundle (on $V$) $N_{D}$ to $X$, $\exp: N_{D}\to X$. For small $\epsilon>0$, $\exp: N_D^\epsilon=\{(x,v)| x\in V,
v\in (T_xD)^\perp, |v|<\epsilon\} \to X$ is a diffeomorphism onto its image. 
Trivialize the normal bundle, so that $N_D^\epsilon \cong V\x B_\epsilon(0)$. 
This gives a smooth chart $f:V\x B_\epsilon(0) \to U$, $f(w_1,w_2)=\exp_{w_1}(w_2)$,
with coordinates $(w_1,w_2)$. 
We define the following orbifold chart: consider $\tilde U=V \x B_{\epsilon}(0)$ and 
$\phi: \tilde U=V\x B_{\epsilon}(0) \to U$, by
$\phi(z_1,z_2) = f (z_1, re^{2\pi m i\theta})$, for $z_2=re^{2\pi i\theta}$.
The action of $\ZZ_{m}$ is given by
$\xi\cdot (z_1,z_2)=(z_1,\xi z_2)$, $\xi=e^{2\pi i /m}$. This defines a chart $(\tilde U, \phi, \ZZ_{m})$ at $x$.

If $x$ lies in the intersection of two surfaces, say $D_1,D_2$, with coefficients $m_1,m_2$, then $\gcd(m_1,m_2)=1$,
by assumption. Take small neighbourhoods $V_1\subset D_1$, $V_2\subset D_2$ of $x$, which we identify
with balls $B_\epsilon(0)\subset \RR^2$. Consider a smooth chart 
$f:B_{\epsilon}(0)\x B_{\epsilon}(0) \subset \RR^2\x\RR^2 \to U$, with $f(0,0)=x$,
$D_1\cap U=f(B_{\epsilon}(0) \x \{0\})$, $D_2\cap U=f(\{0\}\x B_{\epsilon}(0))$, 
and $g$ is the standard metric on $U$. We define the orbifold chart as follows:
consider $\tilde U=B_{\epsilon}(0) \x B_{\epsilon}(0)$ and
$\phi: \tilde U \to U$, $\phi(z_1,z_2) = \phi(r_1e^{2\pi i\theta_1}, r_2e^{2\pi i\theta_2})
 = f(r_1e^{2\pi im_2\theta_1}, r_2e^{2\pi im_1\theta_2})$.
The action of $\ZZ_m= \ZZ_{m_2}\x \ZZ_{m_1}$, $m=m_1m_2$, 
is given by $\xi \cdot (z_1,z_2)=(\xi^{m_1} z_1,\xi^{m_2} z_2)$, where $\xi=e^{2\pi i/m}$.
Then $(\tilde U, \phi,\ZZ_m)$ is a chart at $x$.

It is easy to see that these charts are compatible with the $\ZZ_m$-actions. We only need to check this near a surface $D$, where the change of charts are those of the respective normal bundle $N_D$. So, if the changes of charts for the smooth structure of $X$ near the $D$ are of the form $(w_1, w_2) \mapsto (w_1',w_2')=(\varphi(w_1), h(w_1)w_2)$, for some smooth maps
$\varphi:V_\alpha\subset \CC \to V_\beta \subset \CC$, 
$h:V_\alpha \to S^1$, then the changes of charts for the orbifold structure of $X$ are of 
the form $(z_1, z_2) \mapsto (z_1',z_2')=(\varphi(z_1), h(z_1)^{1/m} z_2)$, for some $m$-th root of $h$, and so they are smooth and $\ZZ_m$-equivariant.
\end{proof}

Note that we have not introduced the freedom of choosing coefficients $j_i$ for each $D_i$. 
We claim that one can always arrange so that the local actions are as above.
First take a neighborhood that intersects just one isotropy surface $D_i$. As $\gcd(j_i,m_i)=1$, changing the generator of $\ZZ_{m_i}$, we can arrange that the action of $\ZZ_{m_i}$ has $j_i=1$, so it is given by $\xi \cdot (z_1,z_2)=(z_1,\xi z_2)$.
Finally, whenever $D_i,D_j$ intersect, as $\gcd(m_i,m_j)=1$, we can arrange simultaneously $j_i=m_i, j_j=m_j$
at $x$ (by (b) of Proposition \ref{prop:models}). This means that using a different set of $j_i$ does not
change the resulting orbifold. 

Also a smooth (cyclic, oriented) $4$-orbifold $X$ can be converted into a smooth manifold with the 
same underlying space such that the
isotropy surfaces are embedded submanifolds intersecting transversely. As we shall not use this 
construction, we do not include the proof.

\medskip

Let $X$ be an orbifold with atlas $\{(\tilde U_\alpha,\phi_\alpha,\Gamma_\alpha)\}$. An orbi-tensor on $X$ is a
collection of tensors $T_\alpha$ on each $\tilde U_\alpha$ which are $\Gamma_\alpha$-equivariant, and which
agree under the changes of charts. In particular, we have orbi-differential forms $\Omega_{orb}^p(X)$, orbi-Riemannian
metrics $g$, and orbi-almost complex structures $J$. The exterior differential, covariant derivatives, Lie bracket, 
Nijenhuis tensor, etc, are defined in the usual fashion.

\begin{definition} \label{def:orb-sympl}
 A symplectic orbifold $(X,\omega)$ is an orbifold $X$ with a $\omega\in \Omega^2_{orb}(X)$ such that 
$d\omega=0$ and $\omega^n>0$, where $2n=\dim X$.

An almost K\"ahler orbifold $(X,J,\omega)$ consists of an orbifold $X$, and orbi-almost complex structure $J$ and
an orbi-symplectic form $\omega$ such that $g(u,v)=\omega(u,Jv)$ defines an orbi-Riemannian metric.

A K\"ahler orbifold is an almost K\"ahler orbifold satisfying the integrability condition that the Nijenhuis tensor $N_J=0$.
This is equivalent to requiring that the changes of charts are biholomorphisms of open sets of $\CC^n$.
\end{definition}

The following (presumably well-known) result will be useful in the following. It allows to have a nice local 
picture of the intersection of two symplectic surfaces in a symplectic $4$-manifold.

\begin{lemma} \label{lem:symplectic orthogonal}
Let $(X,\omega)$ be a symplectic 4-manifold, and suppose that $S, N \subset X$ are symplectic surfaces 
intersecting transversely and positively. Then we can perturb $S$ (small in the $C^0$-sense) 
so that $S$ is symplectic, $S$ and $N$ 
intersect $\omega$-orthogonally, and the perturbation only changes $S$ near the points of intersection with $N$. 

Moreover, once we perturb $S$, there are Darboux coordinates $(z,w)$ near all the intersection points of $N$ and $S$ 
in which $N=\{z=0\}$ and $S=\{w=0\}$.
\end{lemma}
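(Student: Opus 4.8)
The plan is to work one intersection point at a time: after putting $\omega$ into Darboux form adapted to $N$, I would represent $S$ as a graph and then flatten that graph near the point, keeping it symplectic; this single construction yields both assertions of the lemma.

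First I would record that, since $\dim S+\dim N=\dim X$ and the intersection is transverse, $S\cap N$ is discrete (finite if $S,N$ are compact), so the whole argument can be carried out in pairwise disjoint neighbourhoods of its points, $S$ being left untouched elsewhere; this already takes care of the clause ``the perturbation only changes $S$ near the points of intersection''. Fix $p\in S\cap N$. By Weinstein's symplectic neighbourhood theorem (equivalently, a relative Darboux theorem for the symplectic submanifold $N$) there are coordinates $(z,w)=(x_1+iy_1,\,x_2+iy_2)$ on a neighbourhood $V$ of $p$, with $p\leftrightarrow 0$, $\omega=dx_1\wedge dy_1+dx_2\wedge dy_2$ and $N\cap V=\{w=0\}$. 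Since $T_pN$ is then the $z$-plane, and the $\omega$-orthogonal complement of a symplectic subspace is again symplectic with $T_pX=T_pN\oplus (T_pN)^{\perp_\omega}$, one computes $(T_pN)^{\perp_\omega}=\{z=0\}$; thus ``$S$ meets $N$ $\omega$-orthogonally at $p$'' is exactly the condition $T_pS=\{z=0\}$. Transversality lets us write $S$ near $p$ as a graph $z=\sigma(w)$ with $\sigma(0)=0$; setting $A_0=d_0\sigma$, the pull-back of $\omega$ to this graph is $(1+\det D\sigma)\,dx_2\wedge dy_2$, so $S$ symplectic means $\det D\sigma\neq-1$, while $S$ meeting $N$ positively at $p$ gives $\det A_0>-1$.

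Now I would build the perturbation. Fix $\lambda>0$ small, and on $B_\lambda=\{|w|<\lambda\}$ replace $\sigma$ by a function $\tilde\sigma$ that coincides with $\sigma$ for $|w|$ near $\lambda$ and vanishes identically on a smaller ball $B_{a\lambda}$, letting the perturbed surface $S'$ be the graph of $\tilde\sigma$ glued to the unperturbed $S$ outside $V$. Explicitly, put $\tilde\sigma(w)=\phi(|w|/\lambda)\,A_0w$ for $|w|\le b\lambda$, where $\phi\colon[0,\infty)\to[0,1]$ is smooth, non-decreasing, $\equiv 0$ near $0$ and $\equiv 1$ near $b$, and $\tilde\sigma(w)=A_0w+\psi(|w|/\lambda)\,(\sigma(w)-A_0w)$ for $b\lambda\le|w|\le\lambda$, with $\psi$ a cut-off from $0$ to $1$ on $[b,1]$, the matching at the junctions arranged in the obvious way. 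On the outer annulus $\sigma(w)-A_0w=O(|w|^2)$ forces $D\tilde\sigma=A_0+O(\lambda)$, so $\det D\tilde\sigma=\det A_0+O(\lambda)>-1$ for $\lambda$ small; on the inner ball a direct computation gives $\det D\tilde\sigma=\det A_0\cdot\phi(\rho)\bigl(\phi(\rho)+\rho\,\phi'(\rho)\bigr)$ with $\rho=|w|/\lambda$, and since $\phi(\rho)(\phi(\rho)+\rho\phi'(\rho))\ge 0$ this is automatically $>-1$ when $\det A_0\ge 0$. The one delicate case is $\det A_0\in(-1,0)$: then one needs $\phi(\rho)(\phi(\rho)+\rho\phi'(\rho))<1/|\det A_0|$ throughout, which is possible exactly because $1/|\det A_0|>1$. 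Indeed $\phi(\rho)(\phi(\rho)+\rho\phi'(\rho))=\tfrac{1}{2\rho}\tfrac{d}{d\rho}\bigl(\rho^2\phi(\rho)^2\bigr)$, and since $\int_a^b 2\rho\cdot\tfrac{1}{|\det A_0|}\,d\rho=\tfrac{b^2-a^2}{|\det A_0|}>b^2$ once $a$ is small, there is room to choose $\rho\mapsto \rho^2\phi(\rho)^2$ increasing from $0$ to $b^2$ with derivative $<2\rho/|\det A_0|$ — concretely one concentrates the transition of $\phi$ near a sufficiently small radius. This makes $S'$ a symplectic surface with $|\tilde\sigma-\sigma|=O(\lambda)$, hence $C^0$-small, agreeing with $S$ off a neighbourhood of $p$ and still meeting $N$ transversely and positively in exactly the old intersection points.

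Finally I would read off the ``moreover'': on $B_{a\lambda}$ one has $\tilde\sigma\equiv 0$, so in the coordinates $(z,w)$ we get $N=\{w=0\}$, $S'=\{z=0\}$ and $\omega=dx_1\wedge dy_1+dx_2\wedge dy_2$, and interchanging the names $z\leftrightarrow w$ (the standard form is symmetric in the two symplectic pairs) gives precisely the asserted Darboux chart with $N=\{z=0\}$ and $S=\{w=0\}$. The main obstacle in the whole argument is the point flagged above: flattening the graph of $\sigma$ near $p$ drags its differential through a path of matrices joining $A_0$ to $0$, and one must keep this path inside $\{\det>-1\}$ for the graph to remain symplectic — which is exactly where the freedom to localise the cut-off near a small radius is used; everything else is bookkeeping with Weinstein's theorem and the graph formula for $\omega$.
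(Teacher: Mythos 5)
Your proof is correct, and it follows the same overall strategy as the paper: restrict to disjoint Darboux neighbourhoods of the (discrete) intersection points, write $S$ as a graph over the complementary symplectic plane, and flatten that graph with a radial cutoff, the only real issue being that the cutoff's radial derivative can destroy symplecticity when the linear part of the graph has negative determinant in $(-1,0)$. Where you diverge is in how that issue is resolved. The paper keeps the full graph function $az+b\bar z+g(z)$ under a single cutoff $\rho\bigl((|z|/\epsilon)^{2\alpha}\bigr)$ and kills the derivative term by taking the exponent $\alpha$ small, so that $2\alpha\rho\rho'(|z|/\epsilon)^{2\alpha}<\delta$ uniformly; the nonlinear remainder is absorbed into an $O(\epsilon)$ error. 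You instead split off the linear part $A_0w$, handle the nonlinear remainder on an outer annulus by its quadratic vanishing, and on the inner ball compute the determinant exactly as $\det A_0\cdot\phi(\rho)\bigl(\phi(\rho)+\rho\phi'(\rho)\bigr)=\det A_0\cdot\tfrac{1}{2\rho}\tfrac{d}{d\rho}\bigl(\rho^2\phi^2\bigr)$, then use the integral bound $\int_a^b 2\rho/|\det A_0|\,d\rho>b^2$ for small $a$ to show a cutoff with the required pointwise bound exists. Your exact identity makes the positivity condition completely transparent and quantifies precisely how much room there is, at the cost of an extra interpolation region; the paper's reparametrized cutoff is shorter but hides the mechanism in the choice of $\alpha$. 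One cosmetic remark: rather than producing $\phi$ by extracting a square root of a chosen $h(\rho)=\rho^2\phi(\rho)^2$ (which requires a word about smoothness at the left endpoint), it is cleaner to pick $\phi$ directly with its transition concentrated near a small radius and verify the bound on $\phi(\phi+\rho\phi')$; this does not affect the validity of the argument.
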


\begin{proof}

We can arrange that the intersection becomes orthogonal after a small symplectic
isotopy around the intersection point. Suppose we are working in a Darboux chart $(z,w)$
with symplectic form $\omega=-\frac{i}{2} (dz\wedge d\bar z+dw\wedge d\bar w)$, let $N$
be given by the equation $N=\{z=0\}$, and $S$ be given as the graph of a map
$w=az+b\bar{z}+ g(z)$, where $a,b\in \CC$, $|g(z)|\leq C|z|^2$, $|\partial_z g(z)| + |\partial_{\bar{z}} g(z)| \leq C|z|$. 
The condition for $S$ to be symplectic and intersect positively $N$ is that $|a|^2-|b|^2 + 1 > 0$.

One can deform $S$ locally to 
 $$
  S'= \left\{ \left(z,\rho \left( ({|z|}/{\epsilon})^{2\alpha}\right) (az+b\bar z+g(z) ) \right) \right\},
  $$
for some $\epsilon>0$ and $\alpha>0$ to be determined later, where $\rho(t)$ is a bump function which is $0$ on 
$[0,1]$ and $1$ on $[2,\infty)$. Clearly $S'$ intersects $N$ at $(0,0)$ orthogonally with
respect to $\omega$. An easy calculation gives that
\begin{align} \label{eqn:eqn}
 \omega|_{S'} &= - \frac{i}2 (dz\wedge d\bar{z}+dw\wedge d\bar w)  \\
  &= \left(1+ \left(2\alpha\rho\rho' \cdot \left(\frac{|z|}{\epsilon}\right)^{2\alpha} 
+ \rho^2\right)(|a|^2-|b|^2) + O
 \left(\frac{|z|^{2\alpha+1}}{\epsilon^{2\alpha}}+|z| \right) \right) \frac{-i dz\wedge d\bar{z}}2,  \nonumber
\end{align}
where $\rho=\rho(({|z|}/{\epsilon})^{2\alpha})$, 
$\rho'=\rho' (({|z|}/{\epsilon})^{2\alpha})$. Now
 $$
  2\alpha\rho\rho' \cdot \left(\frac{|z|}{\epsilon}\right)^{2\alpha} \leq  \alpha C \, 2^{2\alpha+1}<\delta,
 $$
for any small $\delta>0$, by choosing $\alpha$ small enough. If $|a|^2-|b|^2 \geq 0$, clearly
  $$
 1+ \left(2\alpha\rho\rho' \cdot \left(\frac{|z|}{\epsilon}\right)^{2\alpha} + \rho^2 \right)(|a|^2-|b|^2)  > 0.
$$
If $0>|a|^2-|b|^2 >-1$, then
  $$
 1+ \left(2\alpha\rho\rho' \cdot \left(\frac{|z|}{\epsilon}\right)^{2\alpha} + \rho^2 \right) (|a|^2-|b|^2) 
 > 1 +  (\delta+1) (|a|^2-|b|^2)>0,
$$
choosing $\delta>0$ small enough. The error term in (\ref{eqn:eqn}) is $O(\epsilon)$, so 
it can be neglected for $\epsilon$ small enough. This completes the proof that $S'$ is a symplectic surface.
Clearly, close to $(0,0)$, $S'$ is given by the equation $w=0$.
\end{proof}

For constructing symplectic orbifolds, we have the following result. Again, this seems to be fairly well-known, but
we have not been able to find a proof in the literature.

\begin{proposition} \label{prop:orb->symp}
Let $X$ be a symplectic smooth (oriented) $4$-manifold with symplectic surfaces 
$D_i$ intersecting transversely and positively, and coefficients $m_i>1$ such that
$\gcd(m_i,m_j)=1$ if $D_i$, $D_j$ intersect. 
Then there is a smooth symplectic orbifold $X$ with 
isotropy surfaces $D_i$ of multiplicities $m_i$.
\end{proposition}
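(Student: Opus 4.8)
The plan is to combine Proposition \ref{prop:smooth->orb}, which already produces the smooth orbifold structure $X$ with isotropy surfaces $D_i$ of multiplicities $m_i$, with an explicit construction of a compatible orbi-symplectic form. The orbifold charts are already fixed: away from the $D_i$ they are ordinary smooth charts with trivial group, along a single $D_i$ they are of the form $\phi(z_1,z_2)=f(z_1,r e^{2\pi i m_i\theta})$ with $\ZZ_{m_i}$ acting by $\xi\cdot(z_1,z_2)=(z_1,\xi z_2)$, and at a crossing $D_i\cap D_j$ they are $\phi(z_1,z_2)=f(r_1 e^{2\pi i m_j\theta_1},r_2 e^{2\pi i m_i\theta_2})$ with $\ZZ_{m}=\ZZ_{m_j}\x\ZZ_{m_i}$ acting by $\xi\cdot(z_1,z_2)=(\xi^{m_i}z_1,\xi^{m_j}z_2)$. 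What I need is a closed $2$-form $\omega_\alpha$ on each $\tilde U_\alpha$ which is $\Gamma_\alpha$-invariant, which agrees under changes of charts, and which is nondegenerate; then $\{\omega_\alpha\}$ is by definition an orbi-symplectic form.

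First I would fix the local picture near the isotropy surfaces. By Lemma \ref{lem:symplectic orthogonal}, after a $C^0$-small perturbation of the $D_i$ supported near the crossing points (which does not affect the transversality, positivity, or coprimality hypotheses), there are Darboux coordinates near each crossing point $D_i\cap D_j$ in which $D_i=\{z=0\}$, $D_j=\{w=0\}$ and $\omega$ is standard; and near a point of $D_i$ away from crossings one has standard coordinates with $D_i=\{w=0\}$. Now I transport the smooth symplectic form to the orbifold charts. On a chart $(\tilde U,\phi,\ZZ_{m})$ along a single $D_i$, in the model $\phi(z_1,z_2)=f(z_1, r e^{2\pi i m\theta})$, pulling back the standard form $-\tfrac i2(dz_1\wedge d\bar z_1+dw\wedge d\bar w)$ in the target gives, writing $w=z_2^{m}$ in complex notation (so $dw=m z_2^{m-1}dz_2$), the form $-\tfrac i2\bigl(dz_1\wedge d\bar z_1 + m^2|z_2|^{2(m-1)}dz_2\wedge d\bar z_2\bigr)$. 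This is a smooth, closed, $\ZZ_m$-invariant $2$-form on $\tilde U$ which is nondegenerate away from $\{z_2=0\}$ but degenerate on the core $D_i$ itself. To fix this, I replace the factor $m^2|z_2|^{2(m-1)}$ by a positive function $\psi(|z_2|^2)$ that equals $1$ near $z_2=0$ and equals $m^2|z_2|^{2(m-1)}$ for $|z_2|\geq\epsilon$, and set $\omega=-\tfrac i2(dz_1\wedge d\bar z_1 + \psi(|z_2|^2)\, dz_2\wedge d\bar z_2)$. This $\omega$ is closed (any form $g(|z_2|^2)\,dz_2\wedge d\bar z_2$ is closed on $\CC$), $\ZZ_m$-invariant, nowhere degenerate, and agrees with the pulled-back form in the overlap region $|z_2|\geq\epsilon$. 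The crossing charts are handled the same way in both variables simultaneously, using $\gcd(m_i,m_j)=1$ only to know that the charts glue as smooth orbifold charts (that was already done in Proposition \ref{prop:smooth->orb}).

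The remaining step is to patch these local forms into a single global orbi-symplectic form. On the open set $X\setminus\bigcup D_i$ I just use $\omega$ itself (in the trivial-group charts). Near each $D_i$ I have the modified form $\omega_{D_i}$ constructed above, which agrees with $\omega$ outside an $\epsilon$-tube. I glue with a partition of unity $\{\chi_0,\chi_i\}$ subordinate to the cover by $X\setminus\bigcup D_i$ and the tubular orbifold neighbourhoods $U_i$ of the $D_i$, chosen so that $\chi_i\equiv 1$ on the $\epsilon/2$-tube; the resulting orbi-$2$-form $\Omega=\chi_0\,\omega+\sum_i\chi_i\,\omega_{D_i}$ is a well-defined $\Gamma_\alpha$-invariant form on each chart. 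It is closed provided I arrange the gluing to respect closedness: the cleanest way is to note that in each tube $\omega_{D_i}-\phi^*\omega=d\beta_i$ for a $\ZZ_{m_i}$-invariant $1$-form $\beta_i$ supported in the region $|z_2|\leq\epsilon$ (since both are closed and the tube retracts onto $D_i$, and the difference vanishes near $D_i$ and outside the tube one can take $\beta_i$ compactly supported by a relative Poincaré lemma argument), and then replace $\Omega$ by $\phi^*\omega + d(\sum_i \chi_i\beta_i)$ after choosing the $\chi_i$ to depend only on the tube coordinate so the support conditions are preserved; this form is manifestly closed and equals the nondegenerate $\omega_{D_i}$ near each $D_i$ and $\omega$ away from all tubes. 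Nondegeneracy on the (compact) transition annuli follows because the correction $d(\sum\chi_i\beta_i)$ can be made $C^0$-small there by shrinking $\epsilon$, exactly as in the error-term estimate in the proof of Lemma \ref{lem:symplectic orthogonal}.

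The main obstacle is precisely this last gluing: one must modify the pulled-back symplectic form to be nondegenerate along the orbifold locus $D_i$ — where $w=z_2^{m_i}$ collapses the form — while keeping it closed and keeping it equal to the honest $\phi^*\omega$ on chart overlaps, so that the pieces fit together into a single orbi-tensor. All of this is a routine Moser/partition-of-unity argument once the local models are pinned down by Lemma \ref{lem:symplectic orthogonal}, but it is the only place where care is genuinely needed; everything else is bookkeeping with the charts already set up in Proposition \ref{prop:smooth->orb}.
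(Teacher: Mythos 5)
Your overall strategy matches the paper's in outline --- take the orbifold atlas from Proposition \ref{prop:smooth->orb}, normalize $\omega$ near the $D_i$, transport it to the orbifold charts, and interpolate with a cut-off --- and you correctly identify the crux. But there are genuine gaps. First, a chart inconsistency: the charts of Proposition \ref{prop:smooth->orb} are modulus-preserving, $\phi(z_1,z_2)=f(z_1,re^{2\pi i m\theta})$, under which $-\tfrac i2\,dw\wedge d\bar w$ pulls back to $m\,r\,dr\wedge d\vartheta=-\tfrac{im}{2}dz_2\wedge d\bar z_2$, already smooth and nondegenerate; your computation $dw=mz_2^{m-1}dz_2$ belongs to the different chart $w=z_2^m$. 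With the paper's charts the issue is not degeneracy but that the chart map fails to be smooth at $r=0$, so the pullback of a general (non-normalized) form need not be smooth; with your chart it is degeneracy. Your diagnosis and the $\psi$-fix address only the second situation, so you must commit to one convention and treat its actual defect.

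Second, and more seriously: your corrected local model $-\tfrac i2(dz_1\wedge d\bar z_1+\psi(|z_2|^2)\,dz_2\wedge d\bar z_2)$ lives in a single Darboux chart, but Lemma \ref{lem:symplectic orthogonal} only supplies Darboux coordinates near the finitely many intersection points. Along the rest of the closed surface $D_i$ the transition functions $(z_1,z_2)\mapsto(\varphi(z_1),h(z_1)^{1/m}z_2)$ introduce $dz_1$-cross-terms, so your local forms do not agree on overlaps, and averaging closed forms with a partition of unity destroys closedness. The missing ingredient is a globally defined closed $2$-form on the whole tube around $D_i$ agreeing with $\omega$ along $D_i$; the paper builds it as $\Omega=r\,dr\wedge\eta-\tfrac12 r^2F+\omega|_{D_i}$ from a connection $1$-form $\eta$ on the normal bundle $N_{D_i}$ with curvature $F$, and only then passes to orbifold coordinates. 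Relatedly, your $\beta_i$ cannot be compactly supported in $\{|z_2|\le\epsilon\}$, since $\omega_{D_i}-\phi^*\omega$ has nonzero integral over the normal disc (Stokes); the usual cut-off $d(\chi_i\beta_i)$ still works, but then the nondegeneracy estimate on the transition annulus must compare $d\chi_i\wedge\beta_i$ against the coefficient $m^2\epsilon^{2(m-1)}$ of your degenerate $\phi^*\omega$ there, which is not achieved merely ``by shrinking $\epsilon$''; one needs the inner transition radius of $\psi$ to be of order $\epsilon^m$ or smaller. The paper sidesteps both problems because its comparison form is the honest $\omega$ and its primitive satisfies $|\beta|\le Cr^2$.
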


\begin{proof}
By Lemma \ref{lem:symplectic orthogonal}, we can assume that the surfaces $D_i$ intersect orthogonally.
As in the proof of Proposition \ref{prop:smooth->orb}, we start by fixing a metric. We do this as follows. First 
at each point at an intersection $D_i\cap D_j$, fix a Darboux chart $f: B_{\epsilon}(0) \x
B_{\epsilon}(0)\to U$ with $D_i\cap U=f(B_{\epsilon}(0) \x\{0\})$ and $D_j\cap U=f(\{0\}\x
B_{\epsilon}(0))$. %This is possible because the interesection is positive and transverse.
Take a standard metric on $U$, and the corresponding almost complex structure $J_U$ on $U$. 
Fix now compatible almost complex structures $J_i$ on each $D_i$ (that is, $J_i:T_xD_i\to T_xD_i$
at each $x\in D_i$), which agree on $U$ with $J_U$. 
The normal bundle $N_{D_i}$ over $D_i$ is a symplectic bundle. Take a Riemannian metric
on $N_{D_i}$ compatible with its symplectic structure, and define a Riemannian metric on each
$T_x X=T_x D_i\oplus N_{D_i,x}$, $x\in D_i$, by declaring the direct sum orthogonal. We extend this metric $g$ on
$\bigcup D_i$ to a Riemannian metric on the whole of $X$ compatible with the symplectic form. 
This produces an almost K\"ahler structure on the whole of $X$ for which each $D_i$ is a $J$-invariant surface.

Now we use this metric $g$ for producing the atlas of Proposition \ref{prop:smooth->orb} that gives
$X$ the structure of a smooth orbifold. Let us now construct the orbifold symplectic form. We need first to modify
$\omega$ to a nearby $\omega'$ as follows. 

Let $D=D_i$ be one of the isotropy surfaces. On $N_{D}^\epsilon$ we have a radial coordinate $r$, 
and an angular coordinate $\theta$, well-defined in every chart up to addition of a function on $D$. 
By construction, we have $\omega=\omega|_D  + r\, dr\wedge d\theta$ along $D$.
For the bundle $N_D\to D$, consider a connection $1$-form $\eta \in \Omega^1(N_D - D)$, and let $F=d\eta \in \Omega^2(N_D)$ be its
curvarture. Thus  $\Omega=r dr\wedge \eta -\frac12 r^2 F+\omega|_D$, is a closed form on $N_D$ that 
coincides with $\omega$ along $D$. 
In the last expression, $\omega|_D$ stands for the pull-back of $\omega|_D$ by the bundle projection. 
Now $|\Omega-\omega|\leq Cr$, where $C$ is a constant independent of $r$. On $N_D^\epsilon$, 
$\Omega-\omega$ is closed so (being zero on $D$) it is exact, say $\Omega-\omega = d\beta$.

We can choose the $1$-form  $\beta$ so that it satisfies $|\beta|\leq Cr^2$, by the
usual standard procedure to produce a primitive of an exact form. Indeed, if 
$\Omega-\omega=\alpha_0 \wedge dr + \alpha_1$, one takes 
$\beta= \int_0^r \alpha_0 dr$, 
%{\color{green}(see \cite[p.\ 34]{BOTT-TU}) . 
%This $\beta$ is continuous, and smooth outside $\{r=0\}$} {\color{red} 
which is smooth (see \cite[p.\ 542]{Gompf}).

We also arrange the $1$-form $\eta$ to be equal to $d\theta$ on $U\cap N_D^\epsilon$, so that $F=0$ 
on $U\cap N_D^\epsilon$ and so
$\Omega=\omega$ on $U\cap N_D^\epsilon$.These forms $\Omega$'s for the different $D$'s paste to a globally defined 
$\Omega$ on a neighbourhood of $\bigcup D_i$.

Take a cut-off function $\rho:[0,\epsilon]\to [0,1]$ with $\rho(r)\equiv 1$ for $r\in [0,\frac13 \epsilon]$,
and $\rho(r)\equiv 0$ for $r\in [\frac23\epsilon,\epsilon)$, and $|\rho'|\leq C/\epsilon$. Hence
$\omega'=\omega+d(\rho\beta)$ satisfies that it is equal to $\Omega$ for $|r|\leq \frac13\epsilon$,
equal to $\omega$ for $|r|\geq \frac23\epsilon$, and $|\omega'-\omega| =
|d(\rho\beta)|=|d\rho \wedge \beta+ \rho \wedge d\beta| \leq C\epsilon$. This 
produces a globally defined $2$-form $\omega'$ on $X$. For $\epsilon$ small enough, $\omega'$ is symplectic.

Now let us define our orbi-symplectic form. Take first a point $x$ in some $D=D_i$ and not in $U$. We have
smooth coordinates $(w_1,w_2)$, $w_2=re^{2\pi i\theta}$, and 
orbifold coordinates $(z_1,z_2)$, $z_1=w_1$ and $z_2=re^{2\pi i\vartheta}$, $\theta=m\vartheta$. The action is $\xi\cdot (z_1,z_2)=(z_1,\xi z_2)$.
Here $\omega'=\Omega=\alpha +r\, dr\wedge d\theta+r\, dr\wedge \gamma$, where $\alpha$ is a $2$-form
and $\gamma$ is a $1$-form, and both $\alpha$ and $\gamma$ are invariant in the fiber direction, in particular $\SO(2)$-equivariant (recall that the connection $1$-form is $\eta=d\theta +\gamma$). 

We set, in the orbifold coordinates $(z_1,r, \vartheta)$,
 $$
 \hat\omega=\alpha + m \, r\, dr\wedge d\vartheta+r\, dr\wedge \gamma.
 $$
This is closed, smooth, symplectic and $\ZZ_m$-invariant. Moreover, $\hat\omega$ agrees with the pull-back of $\omega'$ via the orbifold chart $(z_1,z_2) \mapsto (w_1,w_2)$, and this implies that $\hat\omega$ is invariant by the orbifold change of charts.

Finally, on $U$, we take smooth coordinates $(w_1,w_2)$,  $w_1=r_1e^{2\pi i\theta_1}$, 
$w_2=r_2e^{2\pi i\theta_2}$, and orbifold coordinates are 
$z_1=r_1e^{2\pi i\vartheta_1}$,  $z_2=r_2e^{2\pi i\vartheta_2}$, with $\theta_1=m_2\vartheta_1$, $\theta_2=m_1
 \vartheta_2$. Here $\omega'=r_1\, dr_1\wedge d\theta_1+r_2\, dr_2\wedge d\theta_2$. We set
 $$
  \hat\omega= m_2 r_1\, dr_1\wedge d\vartheta_1+ m_1 r_2\, dr_2\wedge d\vartheta_2\, ,
 $$
which defines an orbifold symplectic form on $U$. 
\end{proof}

\begin{remark} \label{rem:omega'}
%{\color{green}
%We observe that in the proof of Proposition \ref{prop:orb->symp}, $[\omega']=[\omega]$.This
%is checked by integrating along any oriented surface $S\subset X$. Take $S$ to intersect transversely
%all $D_i$. Let $S_\delta$ to be $S$ less small $\delta$-balls around the intersections $S\cap D_i$. 
%Then $\la [\omega']-[\omega],[S]\ra= \int_S d(\rho\beta)=\lim \int_{S_\delta} d(\rho\beta)=
%\lim \int_{\bd S_\delta} \rho\beta =\lim \int_{\bd S_\delta} \beta =0$, since $|\beta|\leq Cr^2$.
%Note that $\beta$ is not $C^\infty$ at $r=0$, that is the reason for this indirect argument.}
Consider the orbifold forms $(\Omega_{orb}(X),d)$. Their cohomology is denoted $H_{orb}^*(X)$. 
This is isomorphic to the usual De Rham cohomology \cite[p.\ 8]{CFM}, 
$H^*_{orb}(X)\cong H_{DR}^*(X)$.  The isomorphism can be explicitly constructed as follows: take
a smooth map $\varphi: X \to X$ such that it is the identity off a neighbourhood of $\bigcup D_i$,
and contracts radially a smaller neighbourhood of each $D_i$ to $D_i$, followed by a map that
contracts a neighbourhood of each point in an intersection $D_i\cap D_j$ to the point. Then
the map $\varphi^*: \Omega^*_{orb}(X) \to \Omega^*(X)$ gives the isomorphism
$\varphi^*:H^*_{orb}(X)\to H_{DR}^*(X)$.

The orbifold form $\hat\omega$ defines a class in $H^2_{orb}(X)$
and this is $[\hat\omega]=[\omega']$ under the above isomorphism. 
\end{remark}

\begin{lemma} \label{lem:admits-aK}
Let $(X,\omega)$ be a symplectic orbifold. Then $(X,\omega)$ admits the structure of an almost K\"ahler orbifold.
\end{lemma}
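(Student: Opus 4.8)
The plan is to mimic the classical manifold argument (a symplectic manifold carries a compatible almost complex structure, via contractibility of the space of compatible complex structures on a symplectic vector space), but carried out $\Gamma_\alpha$-equivariantly in each orbifold chart and then patched with a partition of unity. First I would recall the linear statement: on a fixed symplectic vector space $(V,\omega_0)$ the space $\mathcal{J}(V,\omega_0)$ of $\omega_0$-compatible complex structures is non-empty and contractible; moreover, given any inner product $g_0$ on $V$ there is a canonical $\omega_0$-compatible $J$ built from $g_0$ by the polar decomposition of the skew map $A$ defined by $\omega_0(u,v)=g_0(Au,v)$, namely $J=(\sqrt{A^tA})^{-1}A$, and this construction is natural with respect to any linear isomorphism preserving both $\omega_0$ and $g_0$. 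Naturality is the key point that makes it work equivariantly.

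Next I would build the metric. Using an orbifold partition of unity subordinate to the atlas $\{(\tilde U_\alpha,\phi_\alpha,\Gamma_\alpha)\}$ (average any chart-wise partition of unity over $\Gamma_\alpha$ to make it equivariant, then transport to $X$), I assemble a global orbi-Riemannian metric $g$ on $X$: in each chart $\tilde U_\alpha$ pick any $\Gamma_\alpha$-invariant metric (average an arbitrary one over the finite group $\Gamma_\alpha$), and glue these via the partition of unity; the result is a collection of $\Gamma_\alpha$-equivariant tensors $g_\alpha$ agreeing under changes of charts, i.e. an orbi-metric in the sense defined before Definition \ref{def:orb-sympl}. Now apply the polar-decomposition construction chart-by-chart to the pair $(\omega_\alpha,g_\alpha)$ on $\tilde U_\alpha$. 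By naturality of the construction under isomorphisms preserving $\omega$ and $g$: (i) since $g_\alpha$ is $\Gamma_\alpha$-invariant and $\omega_\alpha$ is $\Gamma_\alpha$-invariant (it is an orbi-form), the resulting $J_\alpha$ is $\Gamma_\alpha$-equivariant; (ii) since the changes of charts $f_{\gamma\alpha}$ pull back $\omega_\alpha$ to $\omega_\gamma$ and $g_\alpha$ to $g_\gamma$, they intertwine $J_\alpha$ and $J_\gamma$. Hence the $J_\alpha$ define a global orbi-almost complex structure $J$ on $X$.

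Finally I would check compatibility: by construction $g_\alpha(u,v)=\omega_\alpha(u,J_\alpha v)$ on each chart (this is exactly what the polar decomposition arranges, after possibly replacing $g_\alpha$ by the associated $J_\alpha$-Hermitian metric $\tilde g_\alpha(u,v)=\tfrac12(g_\alpha(u,v)+g_\alpha(J_\alpha u,J_\alpha v))$, which still satisfies $\tilde g_\alpha(u,v)=\omega_\alpha(u,J_\alpha v)$ and is again positive definite and equivariant), and $\omega_\alpha(u,J_\alpha u)>0$ for $u\neq 0$; so $(X,J,\omega)$ with the orbi-metric $g$ given locally by $\tilde g_\alpha$ is an almost Kähler orbifold in the sense of Definition \ref{def:orb-sympl}. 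The only point requiring care — the "main obstacle," though it is more bookkeeping than genuine difficulty — is verifying that averaging over $\Gamma_\alpha$ and gluing with the equivariant partition of unity really produces tensors that agree on overlaps under the compatibility monomorphisms $\imath_{\gamma\alpha}$ and embeddings $f_{\gamma\alpha}$ of the orbifold atlas; once the metric is a bona fide orbi-tensor, the pointwise naturality of the polar decomposition does the rest automatically, with no integrability condition to worry about.
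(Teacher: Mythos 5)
Your proposal is correct and follows essentially the same route as the paper: choose an orbi-Riemannian metric, apply the canonical polar-decomposition construction $J=(\sqrt{AA^*})^{-1}A$ chart by chart, and invoke the naturality of this construction to conclude that $J$ and the associated compatible metric are genuine orbi-tensors. The paper is terser (it simply asserts the existence of an orbi-metric and the canonicity of the construction), while you spell out the equivariant partition-of-unity argument and the naturality check, but the mathematical content is the same.
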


\begin{proof}
We have to adapt the usual construction of an almost K\"ahler structure for a symplectic manifold. 
This can be found in \cite[p.\ 68]{Silva}. Choose an orbifold metric $g'$, and define the orbifold $(1,1)$-tensor
$A$ by $g'(AX,Y)=\omega(X,Y)$. Then $AA^*$ is positive definite and symmetric and hence it is has the well-defined
square root $\sqrt{AA^*}$. Thus, it is an orbifold section of $\End_{orb} (TM)$. Then  $J= (\sqrt{AA^*})^{-1} A$
is an orbifold $(1,1)$-tensor (i.e.\ $J\in \End_{orb} (TM)$) and it clearly satisfies $J^2=-\id$. Recall that
$\omega(JX,JY)=\omega(X,Y)$, and $\omega(X,JX)=g'(\sqrt{AA^*}X,X)$, so the orbi-metric $g$ 
 compatible with $(\omega, J)$ is $g(X,Y)=g'(\sqrt{AA^*}X,Y)$. 

Since this local construction of $J$ and $g$ is canonical (once we have chosen the orbi-metric $g'$), it is compatible with change of charts, so this defines an orbifold almost K\"ahler structure.
\end{proof}

\begin{proposition}\label{prop:locus-orb-K}
If $X$ is a smooth K\"ahler cyclic orbifold, then $X$ is a smooth complex manifold and $D_i$ are complex curves
intersecting transversely.
\end{proposition}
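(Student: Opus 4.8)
The statement is local on $X$, so the plan is to work in one orbifold chart at a time, integrate the orbi-complex structure there, bring the isotropy action into the linear normal form of (\ref{action}), read off both assertions from the case analysis already carried out in Proposition~\ref{prop:models}, and finally check that the complex charts so produced glue. Concretely, in a chart $(\tilde U,\phi,\ZZ_m)$ centred at $x$ the representative of $J$ is a $\ZZ_m$-invariant almost complex structure with vanishing Nijenhuis tensor, so I would first invoke Newlander--Nirenberg to make $(\tilde U,J)$ a complex manifold --- on which $\ZZ_m$ then acts by biholomorphisms, invariance of $J$ being exactly holomorphicity of the action --- and then H.\ Cartan's linearization theorem for finite groups of biholomorphisms (equivalently Bochner's averaging trick at the fixed point) to arrange, after shrinking to a $\ZZ_m$-invariant polydisc, that $\tilde U\subset\CC^2$ has the standard complex structure and $\ZZ_m$ acts $\CC$-linearly. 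The generator, having finite order, is then diagonalizable, so a final linear coordinate change puts the action in the form (\ref{action}).

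Next I would feed this linear model into Proposition~\ref{prop:models}. Since $X$ is a smooth orbifold, $x$ must fall into case (b) or (d) there, i.e.\ $m=m_1m_2$ with $\gcd(m_1,m_2)=1$ (with $m_2=1$ in case (d)) and, after the change of generator noted after Definition~\ref{def:orb-smooth}, the action is $\xi\cdot(z_1,z_2)=(\xi^{m_1}z_1,\xi^{m_2}z_2)$. The computation in the proof of Proposition~\ref{prop:models} already exhibits the invariant map $(z_1,z_2)\mapsto(w_1,w_2)=(z_1^{m_2},z_2^{m_1})$ inducing a homeomorphism of $\tilde U/\ZZ_m$ onto an open set $V\subset\CC^2$; since the invariant ring $\CC[z_1,z_2]^{\ZZ_m}$ equals $\CC[w_1,w_2]$, this is in fact a biholomorphism onto $V$ for the quotient complex structure, and it sends $D_1=\{z_2=0\}$ to the line $\{w_2=0\}$ and (in case (b)) $D_2=\{z_1=0\}$ to the line $\{w_1=0\}$. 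So locally $X$ is a smooth piece of $\CC^2$ and the $D_i$ are coordinate lines, which in case (b) meet transversely; a regular point is trivially of this form.

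To finish I would check compatibility. A change of orbifold charts is given by $\ZZ_m$-equivariant open holomorphic embeddings $\Phi$ between the $\tilde U$'s, and I would argue that each such $\Phi$ descends to a biholomorphism $\bar\Phi$ of the corresponding open sets $V$: a continuous function on one of these sets is holomorphic precisely when its pull-back to the relevant $\tilde U$ is holomorphic (one direction immediate, the other by Riemann's extension theorem across the branch locus), and this property is visibly preserved under the equivariant biholomorphism $\Phi$ and its inverse. Hence the charts $U_\alpha\cong V_\alpha$ turn the topological manifold $X$ into a complex surface in which every $D_i$ is locally a coordinate line --- so a smooth complex curve --- with transverse intersections wherever two of them meet.

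The main obstacle I anticipate is the first step: producing, in each chart, holomorphic coordinates in which the finite isotropy acts linearly, i.e.\ combining Newlander--Nirenberg with an equivariant linearization, and then correctly matching the resulting linear action with the enumerated local models (the bookkeeping of the exponents $j_i$). Everything after that is either already done in Proposition~\ref{prop:models} or formal. A shorter but less self-contained route would be to skip the explicit models: the quotient of a complex manifold by a finite holomorphic action is a normal complex space (H.\ Cartan), a normal complex surface that is a topological manifold is non-singular (Mumford), and the images of the complex curves $\{z_i=0\}$ under the finite quotient maps are complex analytic (Remmert's proper mapping theorem) and, by the local computation, smooth --- so $X$ is a complex manifold and the $D_i$ are complex curves intersecting transversely.
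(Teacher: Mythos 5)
Your proposal is correct and follows essentially the same route as the paper's proof: integrate $J$ chart by chart, linearize the finite isotropy action by the averaging trick (the paper's map $\varphi(z)=\frac1m\sum_{k=0}^{m-1}f^k(d_0f^{-k}(z))$ is exactly the Cartan--Bochner linearization you invoke), and then exhibit the quotient as an open set of $\CC^2$ via the invariant monomials $w_1,w_2$, with the $D_i$ becoming coordinate lines meeting transversely. Your added chart-compatibility check via Riemann extension and the alternative normal-surface route are extra care beyond what the paper writes down, but the substance of the argument is the same.
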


\begin{proof}
As the  almost-complex structure is integrable, we can take the orbifolds charts 
$\phi:\tilde U\to U\subset X$ to be holomorphic, with $\tilde U\subset \CC^2$. The group $\Gamma=\ZZ_m$ acts by
a biholomorphism $f:\tilde U\to \tilde U$. The map $\varphi(z)= \frac1m \sum_{k=0}^{m-1} f^k( d_0 f^{-k}(z))$
defines a new chart $\phi'=\phi\circ \varphi$, where the action of $\Gamma$ is linear,
since $\varphi(d_0f(z))=f(\varphi (z))$.

So $\Gamma<\GL(2,\CC)$ acts by complex transformations, and the quotient $\tilde U/\Gamma$ has a 
natural complex structure (that is,
the complex structure on the complement of $\bigcup D_i$ extends naturally to $\bigcup D_i$). 
The induced map $\bar\phi:\tilde U/\Gamma \to U$ is holomorphic, and thus biholomorphic since it is bijective.
These maps define an atlas as a complex manifold. Note that if $(z_1,z_2)$ are the coordinates for
an orbifold holomophic chart, with action $\xi \cdot (z_1,z_2)=(\xi^{m_2} z_1,\xi^{m_1} z_2)$, $\xi=
e^{2\pi i/m}$, $m=m_1m_2$, then $w_1=z_1^{m_1}$, $w_2=z_2^{m_2}$ define holomorphic
coordinates for the quotient. The surfaces $D_i$ are defined by the equations
$w_1=0$ or $w_2=0$ in such charts, therefore they are smooth complex curves intersecting transversely.
\end{proof}

%%%%%%%%%%%%%%%%%%%%%%%%%%%%%%%%%%%%%%%%%%%%%%%%%%%%%%%%%%%%%%%
\section{Seifert bundles}\label{sec:3}
%%%%%%%%%%%%%%%%%%%%%%%%%%%%%%%%%%%%%%%%%%%%%%%%%%%%%%%%%%%%%%%

A Seifert bundle is a space fibered by circles over an orbifold. We give a precise definition.

\begin{definition} \label{definition seifert bundle}
Let $X$ be a cyclic, oriented $n$-dimensional orbifold. A Seifert bundle over $X$ is an oriented  
$(n+1)$-dimensional manifold $M$ equipped with a smooth $S^1$-action and a 
continuous map $\pi:M \to X$ such that for an orbifold chart $(\tilde U, \phi, \ZZ_m)$, there is
is a commutative diagram
 $$
\begin{tikzcd}
(S^1 \times \tilde U)/\ZZ_{m} \arrow{r}{\cong} \arrow{d}{\pi} & \pi^{-1}(U) \arrow{d}{\pi} \\
\tilde U/\ZZ_{m} \arrow{r}{\cong} & U
\end{tikzcd}
 $$
where the action of $\ZZ_{m}$ on $S^1$ is by multiplication by $\xi=e^{2\pi i /m}$ and the top diffeomorphism is
$S^1$-equivariant.
\end{definition}

\begin{proposition} \label{prop:Seifert}
 An oriented $(n+1)$-manifold endowed with a fixed point free action of $S^1$ is a Seifert bundle over
 a cyclic, oriented $n$-orbifold.
\end{proposition}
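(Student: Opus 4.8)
The plan is to use the slice theorem for the $S^1$-action together with a careful analysis of the possible isotropy subgroups. First I would observe that since $S^1$ is abelian and acts without fixed points, the stabilizer $S^1_p$ of any point $p \in M$ is a proper closed subgroup of $S^1$, hence either trivial or a finite cyclic group $\ZZ_m$ for some $m = m(p) \geq 1$. The quotient $X := M/S^1$ is then a candidate base space, and I would aim to endow it with the structure of a cyclic oriented $n$-orbifold so that $\pi: M \to X$ becomes a Seifert bundle in the sense of Definition \ref{definition seifert bundle}. The charts of this orbifold structure will come directly from local slices.

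The key step is the local model. Fix $p \in M$ with stabilizer $\ZZ_m = \la \xi \ra \subset S^1$. By the slice theorem for compact group actions, a neighbourhood of the orbit $S^1\cdot p$ is equivariantly diffeomorphic to the associated bundle $S^1 \times_{\ZZ_m} \tilde U$, where $\tilde U$ is a $\ZZ_m$-invariant disk in the normal space $N_p := T_p M / T_p(S^1\cdot p)$, on which $\ZZ_m$ acts linearly. Since $M$ is oriented and $S^1$ preserves orientation (being connected), the induced $\ZZ_m$-action on $\tilde U \subset \RR^n$ is by orientation-preserving linear maps, so after conjugation $\ZZ_m \subset \SO(n)$. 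The action is effective on $\tilde U$: any element acting trivially on $\tilde U$ would fix a whole neighbourhood of the orbit pointwise, contradicting that all stabilizers inside that neighbourhood lie in $\ZZ_m$ — more precisely, the ineffective kernel would be a common stabilizer, and one shrinks $\tilde U$ so that the generic stabilizer in $\tilde U$ is trivial, which is possible because the subset of $\tilde U$ with larger stabilizer is a proper linear subspace for each of the finitely many subgroups of $\ZZ_m$. Setting $U := \pi(S^1\times_{\ZZ_m}\tilde U) \cong \tilde U/\ZZ_m$ and letting $\phi: \tilde U \to U$ be the quotient map gives an orbifold chart $(\tilde U, \phi, \ZZ_m)$, and the equivariant diffeomorphism $(S^1\times\tilde U)/\ZZ_m \cong \pi^{-1}(U)$ is exactly the commutative square required by Definition \ref{definition seifert bundle}.

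It remains to check that these charts form a genuine orbifold atlas, i.e.\ that they satisfy the compatibility condition on overlaps, and that $X$ is Hausdorff and second countable (inherited from $M$). For compatibility, given $p' \in U_\alpha \cap U_\beta$, one takes a slice chart $(\tilde U_\gamma, \phi_\gamma, \ZZ_{m(p')})$ around a point of $M$ over $p'$ small enough to embed inside both $\pi^{-1}(U_\alpha)$ and $\pi^{-1}(U_\beta)$; the inclusions of slices and the inclusions of stabilizers $\ZZ_{m(p')} \hookrightarrow \ZZ_{m_\alpha}$, $\ZZ_{m(p')} \hookrightarrow \ZZ_{m_\beta}$ provide the required monomorphisms $\imath_{\gamma\alpha}, \imath_{\gamma\beta}$ and equivariant open embeddings $f_{\gamma\alpha}, f_{\gamma\beta}$, since moving along the $S^1$-direction identifies nearby slices equivariantly. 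Finally, orientability of $X$ follows because the $\SO(n)$-actions and the transition maps are orientation-preserving, the latter because $M$ is oriented and the $S^1$-fibre direction carries a canonical orientation, so the induced orientation on the slices is transported consistently. The main obstacle — and the only point requiring genuine care rather than bookkeeping — is establishing effectiveness of the local $\ZZ_m$-action on the slice after possibly shrinking $\tilde U$, i.e.\ ruling out that a nontrivial subgroup of the stabilizer acts trivially on an entire neighbourhood; this is handled by the observation above that the fixed-point set of any nontrivial subgroup of $\ZZ_m$ on $N_p$ is a proper linear subspace, so a generic small disk has trivial generic stabilizer.
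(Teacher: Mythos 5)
Your proposal is correct and follows essentially the same route as the paper: the paper simply inlines the proof of the slice theorem by averaging a Riemannian metric over $S^1$ and using the exponential map on $H_p=R_p^{\perp}$ to produce the local model $(S^1\times\tilde U)/\ZZ_m$, with orientability forcing $\ZZ_m<\SO(n)$. Your extra discussion of chart compatibility and effectiveness (the latter really following from effectiveness of the global $S^1$-action, since an element acting trivially on a slice is an isometry with identity differential and hence trivial on the connected component) goes beyond what the paper records but does not change the argument.
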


\begin{proof}
 Let $M$ be a manifold  endowed with a fixed point free action of $S^1$. Then $X$ will be the space of leaves of the 
$S^1$-action. The orbifold structure on $X$ is obtained as follows. Take an auxiliary Riemannian metric $g$ and
average it over $S^1$ to make it $S^1$-invariant. For a point $p\in M$, let $O(p)$ be the orbit of $p$. Let $I(p)=\ZZ_m=
\la \xi\ra$, $\xi=e^{2\pi i/m}$, be
the isotropy of $p$. Then the action of $\xi$, say $f:M\to M$, fixes $p$ and the tangent direction $R_p$ to the orbit $O(p)$. Hence
the differential of $d_pf$ fixes the orthogonal hyperplane $H_p=R_p^{\perp}$, inducing an action of $\ZZ_m$ on it. Since
$M$ is oriented, $d_pf$ preserves orientation, so $\ZZ_m =\la d_pf\ra <\SO(n)$. 

For a small $\tilde U\subset H_p$, the exponential map and the $S^1$-action give a local diffeomorphism 
$\varphi: S^1\x \tilde U \to M$, $\varphi(u, z)=u\cdot \exp_p(z)$. On $S^1\x \{0\}$ the isotropy is $\ZZ_m$, 
hence a neighbourhood of $O(p)=\varphi(S^1\x \{0\})$ is modelled on $(S^1\x\tilde U)/\ZZ_m$. 
This action is by multiplication by $\xi$ on the $S^1$-factor, and
by the action of $d_pf$ on $\tilde U$. The space of leaves is identified with $\tilde U/\ZZ_m$, 
and $(\tilde U, \bar\varphi,\ZZ_m)$ gives the desired orbifold chart, $\bar\varphi:\tilde U \to \tilde U/\ZZ_m \subset X$.
\end{proof}

Suppose in the following that $X$ is a $4$-dimensional orbifold and $\pi: M\to X$ is a Seifert bundle over $X$.
According to the normal form of the $\ZZ_m$-action given in (\ref{action}), the open subset
$\pi^{-1}(U)\cong (S^1\times \tilde U)/\ZZ_m$
is parametrized by $(u,z_1,z_2)\in S^1\x \CC^2$, modulo the $\ZZ_m$-action $\xi\cdot (u,z_1,z_2) 
=(\xi u, \xi^{j_1} z_1, \xi^{j_2}z_2)$, for some integers $j_1,j_2$, where
$\xi=e^{2\pi i /m}$. The $S^1$-action is given by
$s\cdot (u,z_1,z_2)=(su,z_1,z_2)$, so $\ZZ_m \subset S^1$ is the isotropy group of $O(p)\subset M$, 
and the exponents $j_1,j_2$ are determined by the $S^1$-action.

We say that $\{(D_i,m_i,j_i)\}$ are the \emph{orbit invariants} of the Seifert bundle if $D_i\subset X$ are
the isotropy surfaces, with multiplicities $m_i$, and the local model around a point $p\in  
D_i^o=D_i - \bigcup_{i\neq j} (D_i \cap D_j)$ is of the form 
$(S^1\times \tilde U)/\ZZ_{m_i}$ with action $\xi\cdot (u,z_1,z_2) =(\xi u, z_1, \xi^{j_i} z_2)$, 
$D_i=\{z_2=0\}$. If the orbifold is smooth, then 
for a point $p\in D_i\cap D_j$, the local model is of the form
$(S^1\times \tilde U)/\ZZ_{m_i}$ with action $\xi\cdot (u,z_1,z_2) =(\xi u, \xi^{j_j}z_1, \xi^{j_i} z_2)$, 
$D_i=\{z_2=0\}$, $D_j=\{z_1=0\}$.

\begin{definition}
For a Seifert bundle $\pi:M\to X$, we define its Chern class as follows. Let $\mu=\ZZ_{m(X)}$, where
$m(X)=\lcm\{ m(x) \, | \, x\in X\}$. Consdier the circle fiber bundle $M/ \mu \to X$ and its 
Chern class $c_1(M/\mu)\in H^2(X,\ZZ)$. We define
 $$
 c_1(M/X)=\frac{1}{m(X)} c_1(M/\mu) \in H^2(X,\QQ).
 $$
\end{definition}

The next proposition shows that the orbit invariants determine the Seifert bundle globally when $X$ is smooth.

\begin{proposition} \label{prop:seifert-existence}
Let $X$ be an oriented $4$-manifold and $D_i \subset X$ oriented surfaces of $X$ which
intersect transversely. Let $m_i>1$ such that $\gcd(m_i,m_j)=1$ if $D_i$ and $D_j$ intersect. 
Let $0<j_i<m_i$ with $\gcd(j_i,m_i)=1$ for every $i$. Let $0<b_i<m_i$ such that $j_ib_i\equiv 1 \pmod{m_i}$.
Finally, let $B$ be a complex line bundle on $X$. Then
there is a Seifert bundle $f:M  \to X$ with orbit invariants $\{(D_i,m_i,j_i)\}$ and first Chern class 
 \begin{equation}\label{eqn:c1}
 c_1(M/X)=c_1(B) + \sum_i \frac{b_i}{m_i} [D_i]. 
 \end{equation}
The set of all such Seifert bundles forms a principal homogeneous space under $H^2(X,\ZZ)$, 
where the action corresponds to changing $B$. 
\end{proposition}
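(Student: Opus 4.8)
The plan is to identify Seifert $S^1$-bundles over $X$ with unit circle bundles of orbifold Hermitian line bundles, translate the orbit invariants into isotropy weights, construct the required orbifold line bundle by a gluing, and then read off both the Chern class and the $H^2(X,\ZZ)$-action. By Definition~\ref{definition seifert bundle} a Seifert bundle is locally $(S^1\x\tilde U)/\ZZ_m$ with $\ZZ_m$ rotating the $S^1$-factor by $\xi=e^{2\pi i/m}$; this is exactly the unit circle bundle of the orbifold line bundle $(\CC\x\tilde U)/\ZZ_m$, with $\ZZ_m$ acting on $\CC$ by the same character, the $S^1$-action being fibrewise rotation, and conversely. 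First I would record the dictionary between the orbit invariant $(D_i,m_i,j_i)$ and the isotropy weight of the corresponding orbifold line bundle $L$ along $D_i$: in the Seifert normal form the generator $\xi$ acts on the fibre coordinate $t\in\CC$ with weight $1$ and on the normal coordinate $z_2$ with weight $j_i$; passing to the generator $\xi'=\xi^{b_i}$ (legitimate since $\gcd(b_i,m_i)=1$) and using $j_ib_i\equiv1\pmod{m_i}$ one finds $\xi'$ acts on $z_2$ with weight $1$ — the normalisation of the orbifold chart of $X$ along $D_i$ used in Section~\ref{sec:2} — and on $t$ with weight $b_i$. Thus $M$ corresponds to an $L$ with isotropy weight $b_i$ along $D_i$, and the analogous computation gives weights $b_i,b_j$ in the two normal directions at a point of $D_i\cap D_j$.

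For existence I would produce $L$ directly as $L=B\otimes\bigotimes_i\SD_i^{\otimes b_i}$, where $B$ is pulled back from the underlying space and $\SD_i$ is the orbifold line bundle which is trivial on $X\setminus D_i$, has isotropy weight $1$ along $D_i$ and weight $0$ along the other $D_j$, and satisfies $\SD_i^{\otimes m_i}\cong\cO_X(D_i)$ (morally $\SD_i=\cO_X(\tfrac1{m_i}D_i)$). The crucial point is that $\SD_i$ exists: on a tubular neighbourhood of $D_i$ it is the tautological model whose local frame transforms like the normal coordinate $z_2$ (so its $m_i$-th power is $\cO_X(D_i)$ there); on $X\setminus D_i$ it is trivial (where $\cO_X(D_i)$ is also trivial, by its defining section); and these agree over the punctured tube because the restriction of the tautological model there is a flat line bundle whose first Chern class lies in $H^2$ of the unit normal circle bundle of $D_i^\circ=D_i\setminus\bigcup_{j\neq i}(D_i\cap D_j)$, a group which by the Gysin sequence equals $H^1(D_i^\circ;\ZZ)$ (since $H^2(D_i^\circ;\ZZ)=H^3(D_i^\circ;\ZZ)=0$, $D_i^\circ$ being an open surface) and is therefore torsion–free, so that flat bundle is topologically trivial and a gluing isomorphism exists. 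Near a point of $D_i\cap D_j$ the two tautological models in the two normal directions must be combined, and it is here that $\gcd(m_i,m_j)=1$ is used: it splits $\ZZ_{m_im_j}$ as $\ZZ_{m_i}\x\ZZ_{m_j}$ so that the weight-$(1,0)$ and weight-$(0,1)$ characters, and hence $\SD_i$ and $\SD_j$, make sense simultaneously. I then set $M$ to be the unit circle bundle of $L$; since the weights $b_i$ are coprime to $m_i$ the relevant characters are faithful, so $M$ is a manifold, and by the first paragraph it has orbit invariants $\{(D_i,m_i,j_i)\}$.

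To obtain~(\ref{eqn:c1}), write $\mu=\ZZ_{m(X)}$ with $m(X)=\lcm\{m_i\}$: then $M/\mu$ is the unit circle bundle of $L^{\otimes m(X)}$ (the $S^1$-fibre modulo $\mu$ being again $S^1$ via $z\mapsto z^{m(X)}$), and since $m_i\mid m(X)$ this bundle has all isotropy weights $\equiv0$, hence is the pull-back of the honest line bundle $B^{\otimes m(X)}\otimes\cO_X\!\big(\sum_i\tfrac{m(X)b_i}{m_i}D_i\big)$ (using $\SD_i^{\otimes m_i}\cong\cO_X(D_i)$). So $c_1(M/\mu)=m(X)c_1(B)+\sum_i\tfrac{m(X)b_i}{m_i}[D_i]$ and dividing by $m(X)$ gives~(\ref{eqn:c1}). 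For the last assertion, two Seifert bundles $M,M'$ with the same orbit invariants correspond to orbifold line bundles $L,L'$ with the same weights $b_i$ at every $D_i$, so $L'\otimes L^{-1}$ has all weights $0$ and therefore is the pull-back of a topological line bundle $E$ on $X$; conversely tensoring with such an $E$ preserves all weights and corresponds to $B\mapsto B\otimes E$ in~(\ref{eqn:c1}). This makes the set of these Seifert bundles a principal homogeneous space under $H^2(X,\ZZ)$ (the group of topological line bundles on $X$), the action being free since it shifts $c_1(M/X)$ by $c_1(E)$, nonzero for nontrivial $E$. The only genuinely delicate step is the existence of the $\SD_i$ — equivalently, gluing the local Seifert models over tubular neighbourhoods of the $D_i$ to a circle bundle on $X\setminus\bigcup D_i$; the obstruction one would expect from the class $\sum\frac{b_i}{m_i}[D_i]$ not being integral is killed by the torsion–freeness of $H^2$ of the punctured tubes, which rests on $D_i^\circ$ being non-compact, while the compatibility at the double points $D_i\cap D_j$ is what forces the hypothesis $\gcd(m_i,m_j)=1$.
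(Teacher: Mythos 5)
Your argument is correct in outline, but it takes a genuinely different route from the paper's. The paper proves existence and the torsor structure in one stroke by parametrizing the gluings of the prescribed local models via \v{C}ech $1$-cocycles in the sheaf $\SC^\infty_{orb}(S^1)$ and the exponential sequence $0\to\ZZ\to\SC^\infty_{orb}\to\SC^\infty_{orb}(S^1)\to 0$, so that the set of Seifert bundles with given orbit invariants is identified with $H^1(X,\SC^\infty_{orb}(S^1))\cong H^2(X,\ZZ)$; it then establishes \eqref{eqn:c1} by an intersection-theoretic computation: pairing $c_1(M/\mu)$ with a test surface $S$ transverse to the $D_i$, choosing a transverse section that lifts locally to $u=z_2^{b_i}$, and reading off the vanishing order $b_i m/m_i$ of $s(w_1,w_2)=(w_2^{b_i m/m_i},w_1,w_2)$ at each point of $S\cap D_i$. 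You instead build the orbifold line bundle explicitly as $B\otimes\bigotimes_i\SD_i^{\otimes b_i}$ with $\SD_i$ an $m_i$-th root of $\cO_X(D_i)$ supported near $D_i$, justify the existence of $\SD_i$ by the Gysin/torsion-freeness argument on the punctured tube, and obtain \eqref{eqn:c1} by pure bookkeeping from the fact that $L^{\otimes m(X)}$ has trivial weights and hence descends to $B^{\otimes m(X)}\otimes\cO_X(\sum_i\tfrac{m(X)b_i}{m_i}D_i)$; the torsor structure then comes from ``weight-zero orbifold line bundles are pullbacks.'' Your weight dictionary (generator change $\xi\mapsto\xi^{b_i}$ turning $(1,j_i)$ into $(b_i,1)$) agrees exactly with the normalization the paper uses inside its $c_1$ computation. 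What your approach buys is a more transparent existence step and a cleaner derivation of the Chern class formula; what it costs is that the delicate point migrates into the gluing of the $\SD_i$ over the punctured tubes and at the double points $D_i\cap D_j$ (where $\gcd(m_i,m_j)=1$ splits the isotropy group) -- you correctly identify and dispatch this, though a full write-up would need to check that the trivializations chosen over each $D_i^\circ$ and near the intersection points can be made mutually compatible.
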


\begin{proof}
The orbit invariants determine uniquely the structure of smooth orbifold for $X$.
Let $\{(\tilde U_\alpha,\phi_\alpha,\Gamma_\alpha)\}$ be a covering of $X$ by orbifold charts.
The orbit invariants determine the local model of the Seifert bundle, so the possible 
Seifert bundles with these orbifold invariants are given by the gluing of the local models. 
This is defined by transition functions $g_{\alpha\beta}:\tilde{U}_\alpha \cap
\tilde{U}_{\beta}\to S^1$ which are $\Gamma_\gamma$-invariant for $\tilde{U}_\gamma\subset \tilde{U}_\alpha \cap
\tilde{U}_{\beta}$. Therefore it is defined by a $1$-cocycle 
in $\SC^\infty_{orb}(S^1)$, the orbifold functions with values in $S^1$. 
Using the exponential short exact sequence of sheaves $0\to \ZZ \to \SC^\infty_{orb} \to \SC^\infty_{orb}(S^1) \to 0$,
where the sheaf of orbifold real functions $\SC^\infty_{orb}$ is a fine sheaf, we have that
the possible Seifert bundles are parametrized by $H^1(X,\SC^\infty_{orb}(S^1)) \cong H^2(X,\ZZ)$.
We can tensor $M\ox B$, for a line bundle $B\to X$, by multiplying the transition
functions. Therefore the set of Seibert bundles forms a homogeneous space under $H^2(X,\ZZ)$.

For $M\ox B$, we have that $(M\ox B)/\mu=(M/\mu) \ox B^{\ox m}$, since the quotient is given locally by
$(u,z_1,z_2) \mapsto (u^m,z_1,z_2)$, where $m=m(X)$. So 
 $$
 c_1((M\ox B)/X)=\frac1m c_1((M\ox B)/\mu)=\frac1m (c_1(M/\mu) +m \, c_1(B))=
 c_1(M/X)+ c_1(B).
 $$

To prove  (\ref{eqn:c1}) is equivalent to prove that $c_1(M/\mu)=m \, c_1(M/X) \equiv \sum_i 
b_i\frac{m}{m_i}[D_i] \pmod{m}$. For this we take a $2$-cycle $S\subset X$, that we can assume
that it intersects transversely the $D_j$'s, and compute
$\la c_1(M/\mu),S\ra$. To compute $c_1(M/\mu)$, we fix a transverse section $s$ of the line bundle
associated to $M/\mu$.

In $\la c_1(M/\mu),S\ra$ there is a contribution coming from balls $B_p\subset S$ around each 
intersection point $p\in S\cap (\bigcup D_i)$ and a contribution from $S^o=S-\bigcup B_p$. 
The second one is $\la c_1(M/\mu),S^o\ra=\la m \,c_1(M/X),S^o\ra \in m\,\ZZ$, since $M\to X$ is
an honest circle bundle over the locus $S^o$. For this equality we choose $s$ to be the image
of a section of the line bundle associated to the circle bundle $M\to S^o$.

Now we look at the circle bundle $M/\mu \to X$
at a point $p\in S\cap D_i$. We can arrange orbifold coordinates $(z_1,z_2)$ such that $D_i=\{z_2=0\}$
and $S=\{z_1=0\}$. The Seifert bundle is given by coordinates $(u,z_1,z_2)$ modulo
$\xi\cdot(u,z_1,z_2)= (\xi u,z_1,\xi^{j_i} z_2)$, $\xi=e^{2\pi i/m_i}$. Equivalently, modulo
$(u,z_1,z_2) \mapsto (\xi^{b_i} u,z_1,\xi  z_2)$. The circle bundle $M/\mu$ is parametrized
by $(v=u^m, z_1,z_2)$ modulo $(v,z_1,z_2) \mapsto (v,z_1,\xi  z_2)$. 
The section $s$ lifts to a section $\hat s$ of $M$ over $\bd B_p$. In 
orbifold coordinates of $X$, it is of the form $\hat{s}(z_1,z_2) = (u(z_1,z_2),z_1,z_2)$ ,
with $u(z_1,\xi z_2)= \xi^{b_i} u(z_1, z_2)$. This means that we can choose $u(z_1,z_2)=z_2^{b_i}$.
Therefore, the section $s$ is locally $s(z_1,z_2)=(v,z_1,z_2)$ with $v=z_2^{b_i m}$.
Going back to smooth coordinates $w_1=z_1$, $w_2=z_2^{m_i}$, the section is written as
$s(w_1,w_2) =(w_2^{b_im/m_i},w_1,w_2)$. Therefore the
zero set of $s$ along $S=\{w_1=0\}$ has multiplicity ${b_im}/{m_i}$.
Adding the contributions of all points $p \in S\cap D_i$, we get the contribution $\sum \frac{b_im}{m_i} \la [D_i],S\ra$ to 
$\la c_1(M/\mu),S\ra$. This proves the sought formula.
\end{proof}

Let $\pi:M\to X$ be a Seifert bundle, $p\in M$ and $x=\pi(p)$.
The fiber over $x$ is the orbit $O(p)$, which is of the form $S^1/\ZZ_m$, where $m=m(x)=m(p)$ is both the 
isotropy of $x$ (as orbifold point) and the isotropy of $p$ (for the $S^1$-action on $M$). 
We call the orbit $O(p)$ \emph{semi-regular} if the orbifold point $x=\pi(p)$ is smooth.
This means that the local model in Proposition \ref{prop:models} is of type (b) or (d). In the case (d), the orbit
$O(p)$ has nearby orbits $O(p')$ with multiplicity $m(p')=m(p)$. In case (b), $m=m_1m_2$ and $\gcd(m_1,m_2)=1$,
and $O(p)$ has nearby orbits $O(p_1)$ and $O(p_2)$ of multiplicities $m_1,m_2$, respectively.

\begin{definition}
We say that a Seifert bundle is \emph{semi-regular} if the base orbifold $X$ is smooth, that is all orbits are semi-regular.
\end{definition}

Now we want to relate the homology of $M$ with that of $X$ for a Seifert bundle $\pi:M\to X$. We only need the case
of a semi-regular Seifert bundle, and we are interested in the case where $H_1(M,\ZZ)=0$. 
We have the following result.

\begin{theorem} \label{thm:Kollar}
Suppose that $\pi:M\to X$ is a semi-regular Seifert bundle with isotropy surfaces $D_i$ with multiplicities $m_i$. 
Then $H_1(M,\ZZ)=0$ if and only if
 \begin{enumerate}
 \item $H_1(X,\ZZ)=0$,
 \item $H^2(X,\ZZ)\to \sum H^2(D_i,\ZZ/m_i)$ is surjective,
 \item $c_1(M/\mu)\in H^2(X,\ZZ)$ is primitive.
 \end{enumerate}
 Moreover, $H_2(M,\ZZ)=\ZZ^k\oplus \bigoplus (\ZZ/m_i)^{2g_i}$, $g_i=$genus of $D_i$, $k+1=b_2(X)$.
\end{theorem}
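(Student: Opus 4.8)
The plan is to analyze the Leray spectral sequence (or, equivalently, the Gysin-type long exact sequence) of the Seifert fibration $\pi:M\to X$, but carried out orbifold-theoretically so that the local isotropy data $\{(D_i,m_i)\}$ enters explicitly. First I would reduce to the case of an honest circle bundle by working with $M/\mu\to X$, where $\mu=\ZZ_{m(X)}$: since each fiber $S^1/\ZZ_{m}$ of $M\to X$ covers the fiber of $M/\mu\to X$, the map $M\to M/\mu$ is a fiberwise $\ZZ_{m(X)}$-covering, and the homotopy type of $M$ differs from that of the circle bundle $M/\mu$ only over the isotropy surfaces $D_i$. Concretely, I would write $M$ as the union of the honest circle bundle over $X^\circ=X-\bigcup D_i$ and the tubular neighborhoods $N_i=\pi^{-1}(\text{nbhd of }D_i)$, whose local models are given in Section~\ref{sec:3}, namely $(S^1\times\tilde U)/\ZZ_{m_i}$, and compute via Mayer--Vietoris.

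The key computation is local: near $D_i^\circ$ the Seifert bundle looks like $(S^1\times \CC)/\ZZ_{m_i}$ with action $\xi\cdot(u,z_2)=(\xi u,\xi^{j_i}z_2)$, whose boundary (the unit circle bundle of the normal disk bundle) is a lens space; the inclusion of this piece contributes a $\ZZ/m_i$ in $H_1$ of the local model, generated by the exceptional fiber. Patching these together, the Mayer--Vietoris sequence in low degrees reads, schematically,
\[
H_2(X^\circ)\oplus\bigoplus_i H_2(N_i)\to H_2(M)\to \bigoplus_i H_1(\partial N_i)\to H_1(X^\circ)\oplus\bigoplus_i H_1(N_i)\to H_1(M)\to 0,
\]
and unwinding it against the known cohomology of the lens-space links produces the three conditions. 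Condition (1) is forced because $H_1(M)\twoheadrightarrow H_1(X)$ (a section over $X^\circ$ gives a splitting after killing the fiber class). Condition (2) is the statement that the boundary maps $\partial N_i\to N_i\simeq D_i$, which carry the $\ZZ/m_i$ reduction of the normal Euler class, do not create new homology, i.e. that every class in $\bigoplus H^2(D_i,\ZZ/m_i)$ is hit from $H^2(X,\ZZ)$ — equivalently that the $\ZZ/m_i$-torsion introduced along $D_i$ is already accounted for globally. Condition (3) is the vanishing of the fiber class in $H_1(M)$: the fiber bounds iff its Euler class $c_1(M/\mu)$ is primitive, by the usual Gysin argument $H_0(X)\xrightarrow{\cup c_1} H^2(X)$.

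The final formula $H_2(M,\ZZ)=\ZZ^k\oplus\bigoplus(\ZZ/m_i)^{2g_i}$ with $k+1=b_2(X)$ I would extract by running the same Mayer--Vietoris one degree higher: the free part $\ZZ^k$ comes from $\ker(c_1:H_2(X)\to \ZZ)$, i.e. the primitive classes orthogonal to the Euler class, giving rank $b_2(X)-1$; the torsion part comes from $H_1(\partial N_i)$, where each circle bundle over the genus-$g_i$ surface $D_i$ with multiplicity $m_i$ contributes $(\ZZ/m_i)^{2g_i}$ from $H_1(D_i)\otimes\ZZ/m_i$, once condition (3) forces the generator of the fiber torsion itself to die. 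The main obstacle I anticipate is bookkeeping the boundary maps precisely enough — in particular showing that the $\ZZ/m_i$ from the exceptional fiber in each $N_i$ maps isomorphically onto the correction term governed by condition (2)+(3), rather than surviving into $H_2(M)$, and doing this uniformly over intersections $D_i\cap D_j$ where the semi-regularity hypothesis ($\gcd(m_i,m_j)=1$, local model of type (b)) must be invoked to keep the two torsion pieces from interacting. This is exactly where semi-regularity, as opposed to mere quasi-regularity, is essential, and I expect Koll\'ar's argument (which this theorem is named after) to be the cleanest reference for the delicate part; I would follow it, adapting the orbifold $H^2(X,\ZZ)\to\bigoplus H^2(D_i,\ZZ/m_i)$ surjectivity condition to our transverse-intersection setting.
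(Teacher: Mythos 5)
Your outline is correct and follows the same underlying argument as the source the paper relies on: the paper's own proof is essentially a citation, deducing everything from Koll\'ar's Propositions 26(3), 27 and 28 together with the argument of \S25 of that paper (plus the observation $H^3(X,\ZZ)=H_1(X,\ZZ)=0$ by Poincar\'e duality), and those propositions are themselves proved by exactly the Leray/Mayer--Vietoris analysis over $X^\circ$ and the local models along the $D_i$ that you sketch. Since you explicitly defer the delicate bookkeeping (the precise identification of condition (2) and the non-interaction of torsion at $D_i\cap D_j$) to Koll\'ar, your proof and the paper's bottom out in the same reference, so I count this as essentially the same approach.
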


\begin{proof}
First, suppose that $X$ is smooth and satisfies (1)--(3). We have $H^3(X,\ZZ)=H_1(X,\ZZ)=0$, 
by Poincar\'e duality. By \cite[Proposition 26(3)]{Kollar}, we get $H_1(M,\ZZ)=0$. Now \cite[Corollary 27]{Kollar}
gives that $b_2(M)=k$ and \cite[Proposition 28]{Kollar} gives that $H_2(M,\ZZ)_{tors}=\bigoplus (\ZZ/m_i)^{2g_i}$.

Conversely, if $H_1(M,\ZZ)=0$ then the argument in \cite[\S25]{Kollar} gives that $H_1(X,\ZZ)=0$. 
Then  \cite[Proposition 26(3)]{Kollar} implies conditions (2)--(3).
\end{proof}

\begin{remark} \label{rem:e2}
If $X$ is a smooth $4$-manifold and $\pi:M\to X$ is a circle bundle, 
then Theorem \ref{thm:Kollar} also applies, just taking empty isotropy locus. Then
$H_1(M,\ZZ)=0$ if and only if $H_1(X,\ZZ)=0$ and $c_1(M)$ is primitive, and in that case
$H_2(M,\ZZ)=\ZZ^k$ with $k=b_2(X)-1$. Note that $H_2(M,\ZZ)$ is torsion-free.
\end{remark}

\begin{corollary} \label{cor:Kollar}
Suppose that $M$ is a $5$-manifold with $H_1(M,\ZZ)=0$ and 
$H_2(M,\ZZ)=\ZZ^k\oplus \bigoplus_{i=1}^{k+1} (\ZZ/p^i)^{2g_i}$, $k\geq 0$, $p$ a prime, and $g_i\geq 1$. 
If $M\to X$ is a semi-regular Seifert bundle, then $H_1(X,\ZZ)=0$, $H_2(X,\ZZ)=\ZZ^{k+1}$, and 
the ramification locus has $k+1$ disjoint surfaces $D_i$ linearly independent in rational homology, and
of genus $g(D_i)= g_i$.
\end{corollary}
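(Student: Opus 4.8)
The plan is to derive everything mechanically from Theorem~\ref{thm:Kollar} and the uniqueness of the primary decomposition of a finite abelian group. Since the Seifert bundle $M\to X$ is semi-regular, $X$ is smooth, so Theorem~\ref{thm:Kollar} applies: from $H_1(M,\ZZ)=0$ it yields $H_1(X,\ZZ)=0$, conditions (2)--(3) of that theorem, and an isomorphism $H_2(M,\ZZ)\cong\ZZ^{k'}\oplus\bigoplus_{i=1}^N(\ZZ/m_i)^{2g(D_i)}$, where $D_1,\dots,D_N$ are the isotropy surfaces, $m_i>1$ their multiplicities, $g(D_i)$ their genera, and $k'+1=b_2(X)$. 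Comparing the free parts with the hypothesis gives $k'=k$, hence $b_2(X)=k+1$. As a smooth oriented $4$-orbifold, $X$ is a closed oriented topological $4$-manifold (Definition~\ref{def:orb-smooth}); Poincar\'e duality gives $H^3(X,\ZZ)\cong H_1(X,\ZZ)=0$, and the universal coefficient theorem then forces $H_2(X,\ZZ)$ to be torsion-free. Therefore $H_2(X,\ZZ)\cong\ZZ^{k+1}$, and likewise $H^2(X,\ZZ)\cong\ZZ^{k+1}$.

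Next I would pin down $N$ and the multiplicities. Comparing torsion subgroups, $\bigoplus_{i=1}^N(\ZZ/m_i)^{2g(D_i)}\cong\bigoplus_{e=1}^{k+1}(\ZZ/p^e)^{2g_e}$. The right-hand side is a $p$-group, so each $m_i$ is a power of $p$; write $m_i=p^{c_i}$ with $c_i\ge1$. Since $g_e\ge1$, the cyclic factor $\ZZ/p^e$ occurs on the right for every $e\in\{1,\dots,k+1\}$, so by uniqueness of the primary decomposition each such $e$ must equal some $c_i$; thus $N\ge k+1$. For the reverse inequality I would reduce condition (2) modulo $p$: the surjection $H^2(X,\ZZ)\to\bigoplus_{i=1}^N H^2(D_i,\ZZ/m_i)=\bigoplus_{i=1}^N\ZZ/m_i$ stays surjective after $\otimes\,\ZZ/p$, giving a surjection $(\ZZ/p)^{k+1}=H^2(X,\ZZ)\otimes\ZZ/p\to(\ZZ/p)^N$, whence $N\le k+1$. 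Therefore $N=k+1$ and $(c_1,\dots,c_{k+1})$ is a permutation of $(1,\dots,k+1)$; after relabelling, $m_i=p^i$. Plugging this back into the torsion comparison, uniqueness of the primary decomposition gives $2g(D_i)=2g_i$, that is $g(D_i)=g_i$ (in particular $g(D_i)\ge1$).

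Finally I would check disjointness and rational linear independence. If $D_i$ and $D_j$ intersected for some $i\ne j$, Proposition~\ref{prop:models} would force $\gcd(m_i,m_j)=1$, impossible since $m_i=p^i$ and $m_j=p^j$ with $i,j\ge1$; hence the $D_i$ are pairwise disjoint. With $N=k+1$ and $m_i=p^i$, the mod-$p$ reduction of the surjection in condition (2) is a surjective, hence bijective, endomorphism of $(\ZZ/p)^{k+1}$; in terms of a $\ZZ$-basis $\alpha_1,\dots,\alpha_{k+1}$ of $H^2(X,\ZZ)$ its matrix is $(\langle\alpha_\nu,[D_i]\rangle \bmod p)_{\nu,i}$. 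Since this matrix is invertible over $\ZZ/p$, the integer matrix $(\langle\alpha_\nu,[D_i]\rangle)_{\nu,i}$ has determinant prime to $p$, hence nonzero, hence rank $k+1$ over $\QQ$. As the natural pairing $H^2(X,\QQ)\otimes H_2(X,\QQ)\to\QQ$ is perfect, the classes $[D_i]$ are then linearly independent in $H_2(X,\QQ)$ and, being $k+1$ of them, form a basis.

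The only real content is the counting in the second step: the upper bound $N\le k+1$ is the \emph{pivotal} point, and it is precisely there that condition (2) of Theorem~\ref{thm:Kollar} and the primality of $p$ are used; everything else is formal bookkeeping. A subsidiary point worth stating carefully is that $H_2(X,\ZZ)$ is genuinely torsion-free, so that ``$H_2(X,\ZZ)=\ZZ^{k+1}$'' is the full conclusion and not merely a statement about the free rank; this is what the Poincar\'e-duality step secures.
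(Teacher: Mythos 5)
Your proposal is correct and follows essentially the same route as the paper: apply Theorem~\ref{thm:Kollar}, match the torsion of $H_2(M,\ZZ)$ against $\bigoplus(\ZZ/p^i)^{2g_i}$ to pin down the multiplicities and force $N=k+1$ via surjectivity of $H^2(X,\ZZ)\to\sum H^2(D_i,\ZZ/p^i)$ reduced mod $p$, deduce disjointness from the $\gcd$ condition, and get rational independence from invertibility mod $p$ of the intersection matrix (the paper phrases this last step with dual classes $[S_i]$ and a putative relation $\sum b_i[D_i]=0$ rather than a determinant, but it is the same argument). Your explicit Poincar\'e duality/UCT justification that $H_2(X,\ZZ)$ is torsion-free is a small point the paper leaves implicit.
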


\begin{proof}
By Theorem \ref{thm:Kollar}, $H_1(X,\ZZ)=0$ and $H_2(X,\ZZ)=\ZZ^{k+1}$. Let $D_1,\ldots, D_m$
be the isotropy surfaces of positive genus, and let $D_{m+1},\ldots, D_l$ be the isotropy spheres with coefficients $m_j$. 
For each coefficient $p^i$, $1\leq i\leq k+1$, there is at least one 
$D_i$ with isotropy $p^i$, hence it must be $m\geq k+1$. 
Now recall that the map 
$$
\ZZ^{k+1} \to \sum_{i=1}^{k+1} H^2(D_i,\ZZ/p^i) + \sum_{j=k+2}^{m} H^2(D_j,\ZZ/p^{i_j}) + \sum_{j=m+1}^l
H^2(D_j,\ZZ/m_j)
$$ 
is surjective. In particular, it cannot be $m>k+1$,
so there are exactly $k+1$ surfaces $D_i$, and they have each a different isotropy coefficient $p^i$. 
It follows that $g(D_i)=g_i$, and that $D_i$ and $D_j$ are disjoint for $i \ne j$.

Let us see that the classes $[D_i]$ are independent. The above map is given by
\begin{eqnarray*}
H^2(X,\ZZ)=\ZZ^{k+1} & \too & \sum_{i=1}^{k+1} H^2(D_i,\ZZ/p^i) = \sum_{i=1}^{k+1} \ZZ/p^i \\
 \,  [S] &\mapsto & \left( [S] \cdot [D_i] \pmod{p^i} \right)
\end{eqnarray*}
As this map is surjective, for each $[D_i]$ there exists an element $[S_i] \in H^2(X,\ZZ)$ so that
$[S_i] \cdot [D_i] \equiv 1  \pmod{p^{i}}$ and $[S_i] \cdot [D_j] \equiv 0 \pmod{p^{j}}$ for $j \ne i$.
Thus $[S_i] \cdot [D_i] \equiv 1  \pmod{p}$ and $[S_i] \cdot [D_j] \equiv 0 \pmod{p}$ for $j \ne i$.
If the $[D_i]$ are not linearly independent then there exists integers
$b_i$ so that $\sum b_i [D_i]=0$. We can choose $b_i$ that are coprime.
Multiplying by $[S_j]$ we get $\sum b_i [D_i]\cdot [S_j]=0$, for $1\le j \le k+1$. Reducing modulo $p$,
we have $b_i \equiv 0 \pmod{p}$, which is a contradiction. 
\end{proof}

%%%%%%%%%%%%%%%%%%%%%%%%%%%%%%%%%%%%%%%%%%%%%%%%%%%%%%%%%%%%%%%
\section{K-contact and Sasakian $5$-manifolds}\label{sec:4}
%%%%%%%%%%%%%%%%%%%%%%%%%%%%%%%%%%%%%%%%%%%%%%%%%%%%%%%%%%%%%%%

A Sasakian or a K-contact structure on a compact manifold $M$ is called \textit{quasi-regular} if there is a positive
integer $\delta$ satisfying the condition that each point of $M$ has a neighbourhood
such that each leaf for $\xi$ passes through $U$ at most $\delta$ times. If
$\delta\,=\, 1$, then the Sasakian or K-contact structure is called \textit{regular} (see \cite[p.\ 188]{BG}).

%If $N$ is a K\"ahler manifold whose K\"ahler form $\omega$ 
%defines an integral cohomology class, then the total space of the circle bundle 
%$S^1 \hookrightarrow M \stackrel{\pi}{\longrightarrow} N$ with Euler class $[\omega]\in 
%H^2(M,\mathbb{Z})$ is a regular Sasakian manifold with contact form $\eta$ such that $d 
%\eta \,=\, \pi^*(\omega)$. The converse also holds: if $M$ is a regular Sasakian structure 
%then the space of leaves $N$ is a K\"ahler manifold, and we have a circle
%bundle $S^1\to M \to N$ as above. 
%If $M$ has a quasi-regular Sasakian structure, then the space of leaves $N$ is a K\"ahler
%orbifold with cyclic quotient singularities, and there is an orbifold circle bundle
%$S^1 \to M\to N$ such that the contact form $\eta$ satisfies $d\eta=\pi^*(\omega)$,
%where $\omega$ is the orbifold K\"ahler form.
%
%Similar properties hold in the K-contact case, substituting K\"ahler by symplectic (actually almost K\"ahler).
%If $M$ has a regular K-contact structure, then it is the total space of a circle bundle
%$S^1 \hookrightarrow M \stackrel{\pi}{\longrightarrow} N$, where $(N,\omega)$ is a symplectic
%manifold, with Euler class $[\omega]\in 
%H^2(N,\mathbb{Z})$ and $d \eta \,=\, \pi^*(\omega)$. If $M$ has a quasi-regular K-contact structure,
%then it is the total space of an orbifold circle bundle $S^1 \to M\to N$ over a symplectic
%orbifold $N$ with cyclic quotient singularities and Euler class $[\omega]\in 
%H^2_{orb}(N,\mathbb{Z})$, where $\omega$ is the orbifold symplectic form.

A result of \cite{RUK} says that if $M$ admits a Sasakian structure, then it admits also a quasi-regular Sasakian structure. Also, if
a compact manifold $M$ admits a K-contact structure, it admits a quasi-regular contact structure \cite{MT}. 

\begin{theorem} \label{thm:otro}
Let $(M,\eta,\Phi,\xi,g)$ be a quasiregular K-contact manifold. Then the space of leaves $X$ has a natural structure of an almost K\"ahler cyclic orbifold where the projection $M \to X$ is a Seifert bundle.

Furthermore, if $(M,\eta,\Phi,\xi,g)$ is Sasakian, then $X$ is a K\"ahler orbifold.
\end{theorem}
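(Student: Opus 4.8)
The plan is to start from the quasi-regular K-contact structure $(M,\eta,\Phi,\xi,g)$ and exploit the fact that $\xi$ generates a locally free $S^1$-action (after rescaling $\eta$ so the Reeb flow has closed orbits, using quasi-regularity). By Proposition \ref{prop:Seifert}, the space of leaves $X$ is then a cyclic, oriented $4$-orbifold and $\pi:M\to X$ is a Seifert bundle. The orientation on $X$ comes from the transverse orientation $d\eta|_{\ker\eta}$, and the local models of the $S^1$-action match those in (\ref{action}) since $\xi$ is Killing, so its flow acts by isometries and the isotropy representation lands in $\SO(4)$, indeed in $\U(2)$ because $\Phi$ commutes with the flow. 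This is the part already packaged by the earlier propositions, so I would invoke them directly.

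Next I would descend the transverse geometry. The key point is that $\eta$ is $S^1$-invariant (since $\mathcal{L}_\xi\eta=0$), so $d\eta$ is a basic (horizontal and invariant) $2$-form; being basic, it descends to a closed orbi-$2$-form $\omega\in\Omega^2_{orb}(X)$ with $\pi^*\omega=d\eta$. Because $\eta\wedge(d\eta)^2>0$ and $d\eta$ is horizontal of maximal rank on $\ker\eta$, we get $\omega^2>0$, so $(X,\omega)$ is a symplectic orbifold in the sense of Definition \ref{def:orb-sympl}. Similarly $\Phi$ preserves $\ker\eta=\mathcal{D}$, commutes with $\xi$, and is invariant under the flow (as $\xi$ is Killing and the whole K-contact structure is $\xi$-invariant), so it descends to an orbi-almost-complex structure $J$ on $X$; the compatibility conditions $d\eta(\Phi X,\Phi Y)=d\eta(X,Y)$ and $d\eta(\Phi X,X)>0$ descend to $\omega(JX,JY)=\omega(X,Y)$ and $\omega(X,JX)>0$, i.e.\ $g_X(u,v):=\omega(u,Jv)$ is an orbi-Riemannian metric. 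Hence $(X,J,\omega)$ is an almost K\"ahler cyclic orbifold. I should check that these descended tensors are genuinely orbi-tensors, i.e.\ $\Gamma_\alpha$-equivariant in each chart and compatible with changes of charts, but this is automatic from the Seifert chart description $(S^1\times\tilde U)/\mathbb{Z}_m$ in Definition \ref{definition seifert bundle} together with the $S^1$-invariance of the data.

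For the Sasakian case the extra input is integrability of the almost complex structure $I$ on the cone $C(M)=M\times\mathbb{R}^{>0}$. The cleanest route is to relate the integrability of $J$ on $X$ to that of $I$: the transverse almost complex structure $\Phi|_{\mathcal D}$ and the cone complex structure $I$ encode the same data up to the extra $\xi,t\partial_t$ directions, and a standard computation shows $N_I=0$ on the cone implies the transverse Nijenhuis tensor of $\Phi$ vanishes, i.e.\ $N_J=0$ on $X$. Alternatively one uses the well-known fact that a Sasakian structure is precisely a normal contact metric structure, which is equivalent to the transverse holomorphicity of $\Phi|_{\mathcal D}$ with respect to the transverse metric; this normality condition is manifestly basic and descends to $N_J=0$. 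By Definition \ref{def:orb-sympl}, $(X,J,\omega)$ is then a K\"ahler orbifold.

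The main obstacle I expect is not conceptual but bookkeeping: verifying cleanly that all the structure tensors are \emph{basic} (horizontal and $\xi$-invariant) so that they descend, and that on the singular orbits the descended objects land in the orbifold charts $(\tilde U,\phi,\mathbb{Z}_m)$ as genuine $\mathbb{Z}_m$-equivariant tensors rather than merely forms on the smooth locus; here one must use that $\Phi$ preserves $\ker\eta$ and commutes with the isotropy action $d_pf\in\U(2)$, so the local model $(S^1\times\tilde U)/\mathbb{Z}_m$ carries a product-type almost complex structure descending to $\tilde U/\mathbb{Z}_m$. The other delicate point is the passage $N_I=0\ \Rightarrow\ N_J=0$: one should phrase it via the transverse Nijenhuis tensor of the foliation by Reeb orbits and note that the $\mathbb{R}^{>0}$-direction and $\xi$-direction contributions to $N_I$ vanish automatically for a contact metric structure, leaving exactly the basic part, which is $\pi^*$ of the orbifold Nijenhuis tensor of $J$.
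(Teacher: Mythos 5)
Your proposal is correct and follows essentially the same route as the paper: use quasi-regularity to see that the Reeb flow closes up into a locally free $S^1$-action (the paper spells this out by showing the first-return map $d_pf$ on $\ker\eta|_p$ has finite order), invoke Proposition \ref{prop:Seifert} to get the Seifert bundle over a cyclic orbifold, descend $d\eta$ and $\Phi$ through the local models $(S^1\times\tilde U)/\ZZ_m$ to obtain the almost K\"ahler structure, and in the Sasakian case push the vanishing of $N_I$ on the cone down to $N_J=0$ by lifting equivariant vector fields and projecting the bracket. The only place you are lighter than the paper is the finite-order argument for the return map, but that is exactly the standard fact you cite.
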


\begin{proof}
Take a point $p \in M$, and let $O(p)$ be the orbit through $p$. Since $O(p)$ interesects finitely 
many times every small neighbourhood, then $O(p)$ must be a circle. Let 
$\phi_t$ be the Reeb flow, and consider $t_p$ the period of $\phi_t(p)$. Let $f=\phi_{t_p}$,
$H_p=\la \xi_p\ra^\perp=\ker \eta|_{p}$, and $d_pf:H_p\to H_p$. For $\epsilon >0$ small, take $B_\epsilon(0)\subset H_p$.
Then $\varphi:\RR\x B_\epsilon(0)\to X$, $\varphi(t,w)=\phi_t(\exp_p(w))$, is an open embedding whose image $W$
is a neighbourhood of $O(p)$ consisting of orbits of the Reeb flow (recall that the Reeb flow is by isometries, so it 
preserves the distances to $O(p)$). 
Since $\xi$ is a quasi-regular vector field, the orbits intersect $S_p=\varphi(\{0\}\x B_\epsilon(0))$ 
at finitely many points. For $q=\varphi(0,w)$, the points of intersection are $f^k(q)=\varphi(k\,t_p,w)$, $k\in \ZZ$.
So there is some $k$ such that $f^k(q)=q$, i.e.,\ $d_pf^k(w)=w$.
Therefore $d_pf:H_p\to H_p$ is of finite order. Let $m$ be its order. So $d_pf^m=\Id$, hence 
$f^m=\Id$. Therefore $\phi_t$ gives an $S^1$-action with period $m\,t_p$.

By Proposition  \ref{prop:Seifert}, we have a Seifert bundle $\pi:M\to X$, over the space of leaves $X$, which is
a cyclic orbifold. Let us see that $X$ has the structure of an almost K\"ahler orbifold. The open set $W\cong
(S^1\x B_\epsilon(0))/\ZZ_m$, and the orbifold chart is $(\tilde U=B_\epsilon(0),\phi, \ZZ_m)$, where
$\bar\varphi:B_\epsilon(0) \to X$, $\bar\varphi(w)=\pi(\varphi(0,w))=\pi(\exp_p(w))$. 
Then the orbifold tangent space at $\bar  p=\pi(p)$ is identified
with $T_0  \tilde U \cong H_p$. We put at $\bar p$ the complex 
and symplectic structures $J,\omega$ on $T_0 \tilde U$
given by $\Phi, d\eta$ on $H_p$, respectively. These are well defined independently of the point
in the orbit, since the Reeb flow acts by isometries, preserving $\Phi$ and $\eta$. Finally, these complex
and symplectic structures are $\ZZ_m$-invariant (since the action is given by $d_pf$, 
the isometry defined by the Reeb flow $f=\phi_{t_p}$).

Now suppose that $M$ is Sasakian. Then, by
definition, there is an integrable complex structure $I$ on the cone $C(M)=M\x\RR^{>0}$, given
by $I(X)=\Phi(X)$ on $\ker\eta$, and $I(\xi)=t\frac{\bd}{\bd t}$. This means that the Nijenhuis tensor vanishes, i.e.,
 \begin{equation}\label{eqn:NI} 
 N_I(X,Y)=-[X,Y]+ I[IX,Y]+ I[X,IY]-[IX,IY]=0.
 \end{equation} 
Take an orbifold chart $(\tilde U,\bar\varphi,\ZZ_m)$ as above with $W=(S^1\x \tilde U)/\ZZ_m$,
$\bar p=\bar\varphi(0)$. Take $X,Y$ two $\ZZ_m$-equivariant vector fields on $\tilde U$.
Let us see that $N_J(X,Y)_{\bar p}$ vanishes.
The vector fields $X,Y$ define vector fields, that we denote $X,Y$  again, on 
$W\x \RR^{>0} = ((S^1 \x \tilde U)/\ZZ_m )\x \RR^{>0}=
(S^1 \x \tilde U\x \RR^{>0})/\ZZ_m $ (defining them as zero along the
coordinates $S^1\x \RR^{>0}$). We write $X=X'+f\xi$, $Y=Y'+g\xi$, where 
$X'_x,Y'_x\in H_x$, for all points $x\in W$, $f,g$ smooth functions with $f(p)=g(p)=0$.
We expand $N_I(X,Y)$ given in (\ref{eqn:NI}), substitute at $p$, and discard the components
with $\xi, \partial_t$ (that is, project down to $H_p$). We get  
 $$
 -[X',Y']+ \Phi [\Phi X',Y']+\Phi [X',\Phi  Y']-[\Phi X',\Phi Y']=0,
 $$ 
at $p$. Using the projection $h: S^1 \x \tilde U \x \RR^{>0} \to \tilde U$,
and that the Lie bracket is preserved ($h_*[X',Y']=[X,Y]$), and the formula $h_*(\Phi X')=JX$, we get
$N_J(X,Y)=-[X,Y]+ J[JX,Y]+ J[X,JY]-[JX,JY]=0$ at $\bar p$, as required.
\end{proof}

\begin{lemma} \label{lem:omeg}
Let $(X,\omega)$ be a symplectic
$4$-manifold with a collection of embedded symplectic surfaces $D_i$
intersecting transverselly and positively, and integer numbers $m_i>1$, with $\gcd(m_i,m_j)=1$ 
whenever $D_i \cap D_j \neq \emptyset$. Then there is a Seifert bundle $\pi:M\to X$ such that:
\begin{enumerate}
\item It has Chern class $c_1(M/X)=[\hat\omega]$ for some orbifold symplectic form $\hat\omega$ on $X$.
\item If $\sum \frac{b_im}{m_i} [D_i]$ is primitive and the second Betti number $b_2(X)\geq 3$, then 
then we can further have that $c_1(M/\mu)\in H^2(X,\ZZ)$ is primitive.
\end{enumerate}
\end{lemma}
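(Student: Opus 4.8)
The plan is to apply Proposition~\ref{prop:orb->symp} to give $X$ the structure of a smooth symplectic orbifold, and then to invoke Proposition~\ref{prop:seifert-existence} to produce a Seifert bundle whose Chern class is chosen with two properties in mind: that it lie in the symplectic cone of $X$, which gives (1), and that $m(X)$ times it be primitive, which gives (2). Concretely: by Proposition~\ref{prop:orb->symp} together with Remark~\ref{rem:omega'}, $X$ carries an orbifold symplectic form $\hat\omega_0$ with $[\hat\omega_0]=[\omega]$ in $H^2_{orb}(X)\cong H^2(X;\RR)$. Fix admissible orbit invariants $j_i$ (so $0<j_i<m_i$, $\gcd(j_i,m_i)=1$) and let $0<b_i<m_i$ with $j_ib_i\equiv1\pmod{m_i}$; set $m:=m(X)=\lcm(m_i)$, $\kappa:=\sum_i\frac{b_i}{m_i}[D_i]$, and $v:=m\kappa=\sum_i\frac{b_im}{m_i}[D_i]\in H^2(X;\ZZ)$. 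By Proposition~\ref{prop:seifert-existence}, as $B$ ranges over the complex line bundles on $X$ the Seifert bundles with orbit invariants $\{(D_i,m_i,j_i)\}$ realize exactly the Chern classes $c_1(M/X)=c_1(B)+\kappa$, and then $c_1(M/\mu)=m\,c_1(B)+v$.

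The key observation I would use is that the set $\SK_{orb}\subset H^2(X;\RR)$ of classes of orbifold symplectic forms is an open cone containing $[\omega]$: it is a cone since positive multiples of orbi-symplectic forms are orbi-symplectic, and it is open by the standard argument that positivity of $\hat\omega^2$ is $C^0$-open and that a class near $[\omega]$ has a closed orbi-$2$-form representative which is $C^0$-close to $\hat\omega_0$ (on a compact orbifold $H^2_{orb}$ is finite-dimensional). Hence there is $\epsilon_0>0$ with $B_{\epsilon_0}([\omega])\subset\SK_{orb}$, and therefore $B_{N\epsilon_0}(N[\omega])=N\cdot B_{\epsilon_0}([\omega])\subset\SK_{orb}$ for every $N$. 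For part (1) it then suffices to pick $N$ so large that $N\epsilon_0$ exceeds the covering radius of the image of $H^2(X;\ZZ)$ in $H^2(X;\RR)$, and to choose $B$ with $c_1(B)$ within that covering radius of $N[\omega]-\kappa$; then $c_1(M/X)=c_1(B)+\kappa$ lies in $\SK_{orb}$ and, being rational, is the class $[\hat\omega]$ of an actual orbifold symplectic form.

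For part (2), suppose $v$ is primitive and $b_2(X)\ge3$. The point is that for $N$ large we have a great deal of freedom: we may take $c_1(B)$ to be any class whose image lies within distance $R:=\tfrac12\epsilon_0N$ of $N[\omega]-\kappa$, since then $c_1(M/X)\in B_R(N[\omega])\subset\SK_{orb}$ and part (1) still holds. Among these I would choose $c_1(B)$ so that $c_1(M/\mu)=m\,c_1(B)+v$ is primitive. For a prime $p\mid m$ this is automatic, since $m\,c_1(B)+v\equiv v\not\equiv0$ in $H^2(X;\ZZ)\ox\ZZ/p$ (as $v$ is primitive). For $p\nmid m$ the locus of $c_1(B)$ with $m\,c_1(B)+v\equiv0\pmod p$ is a single coset of $pH^2(X;\ZZ)$, of index at least $p^{b_2(X)}\ge p^3$ in $H^2(X;\ZZ)$. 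Since $\sum_p p^{-3}<1$, the ``bad'' classes have upper density strictly less than $1$, so a lattice-point count in the ball $B_R(N[\omega]-\kappa)$ — whose radius tends to infinity with $N$ — yields a ``good'' choice of $c_1(B)$ once $N$ is large enough. The corresponding Seifert bundle then satisfies both (1) and (2).

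The step I expect to be the main obstacle is making the counting in the last paragraph rigorous: one must estimate, uniformly in the (receding) centre $N[\omega]-\kappa$, the number of lattice points of each coset $pH^2(X;\ZZ)$ inside a large ball, and account for torsion in $H^2(X;\ZZ)$ and for primes $p$ comparable to $R$. This is routine rather than deep: the error term $O(R^{b_2(X)-1})$ for counting a fixed sublattice in a ball is independent of the centre, torsion only decreases the bad densities, and since every coordinate of $m\,c_1(B)+v$ is $O(R)$ no prime exceeding $O(R)$ can divide their greatest common divisor, so the sum over primes is effectively finite with tail $o(R^{b_2(X)})$.
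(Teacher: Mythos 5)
Your part (1) is essentially the paper's argument, reorganized. The paper perturbs $\omega$ so that $[\omega]=\frac{1}{mk+1}\bigl(a+c_1(M/X)\bigr)$ for some $a\in H^2(X,\ZZ)$, $k\geq 1$ (using density of that set of classes), rescales to $\tilde\omega=(mk+1)\omega$, twists by $B$ with $c_1(B)=a$, and only then applies Proposition~\ref{prop:orb->symp} to produce $\hat\omega$ with $[\hat\omega]=[\tilde\omega]=c_1(\tilde M/X)$; you pass to the orbifold symplectic cone first and locate a point of the coset $\kappa+H^2(X,\ZZ)$ in a large scaled ball. Both rest on the same two facts --- density of the achievable Chern classes and openness of the symplectic condition under scaling --- so I count these as the same proof. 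Part (2) is where you genuinely diverge. The paper chooses primitive classes $b_1,b_2$ orthogonal to $c_1(M/\mu)$ and a class $a_0$ with $a_0\cdot b_1=1$, $a_0\cdot b_2=0$ (this is where $b_2(X)\geq 3$ and unimodularity of the intersection pairing enter), arranges $\gcd(a\cdot b_1,\,a\cdot b_2)=1$ by an explicit replacement $a\mapsto k_0a+a_0$ while keeping density, and then observes that any $\ell$ dividing $c_1(\tilde M/\mu)$ divides $m(a\cdot b_1)$ and $m(a\cdot b_2)$, hence $m$, hence $v=c_1(\tilde M/\mu)-ma$, contradicting primitivity of $v$. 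Your sieve over primes ($p\mid m$ killed by primitivity of $v$; $p\nmid m$ contributing a single coset of $pH^2(X,\ZZ)$ of density at most $p^{-b_2(X)}\leq p^{-3}$, with $\sum_p p^{-3}<1$) reaches the same conclusion and is sound --- you correctly identify and dispose of the real issues (uniformity of the count in the receding centre, torsion, primes of size comparable to the radius) --- but it leaves a quantitative lattice-point estimate to be carried out, whereas the paper's construction is explicit and needs no counting. What your route buys is a little extra generality: the density bound already closes for $b_2(X)\geq 2$ since $\sum_p p^{-2}<1$, while the paper's choice of $b_1,b_2,a_0$ genuinely uses $b_2(X)\geq 3$.
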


\begin{proof}
Consider the Seifert bundle $\pi:\widetilde{M} \to X$ given by some orbit invariants $\{(D_i,m_i,j_i)\}$,
$m=m(X)$, with $c_1(\widetilde{M}/\mu)=m\sum \frac{b_i}{m_i} [D_i]$, possible by 
Proposition \ref{prop:seifert-existence}. The set of elements
 \begin{equation}\label{eqn:aaa}
 \left\{\frac{1}{m k +1}a +\frac{1}{mk+1} c_1(\widetilde{M}/X) \, |\,  a\in  H^2(X,\ZZ),  k\geq 1 \right\}\subset H^2(X,\RR)
 \end{equation}
is dense. So we can perturb $\omega$ slightly so that 
$[\omega] = \frac{1}{m k +1}a+\frac{1}{mk+1} c_1(\widetilde{M}/X)$, 
for some $a\in  H^2(X,\ZZ)$ and $k\geq 1$.
Then the symplectic form $\tilde\omega=(mk+1)\omega$ satisfies that 
$[\tilde\omega]=a+c_1(\widetilde{M}/X)$. Choosing a line bundle $B$ with $c_1(B)=a$,
we have a Seifert bundle $M=\widetilde{M}\ox B$ with $c_1(M/X)=[\tilde\omega]$. 
Now the process of Proposition \ref{prop:orb->symp} gives an orbifold symplectic form $\hat\omega$ on
the orbifold $X$ with isotropy surfaces $D_i$ with multiplicities $m_i$. This has
$[\hat\omega]=[\tilde\omega]=c_1(M/X)$. This proves (1).

Now let us see (2).
Take a primitive class $b_1\in H^2(X,\ZZ)$ with $c_1(\widetilde{M}/\mu) \cdot b_1=0$. Then there
exists $a_0\in H^2(X,\ZZ)$ with $a_0\cdot b_1=1$. Now take a primitive $b_2 \in H^2(X,\ZZ)$ 
with $c_1(\widetilde{M}/\mu) \cdot b_2=0$ and $a_0\cdot b_2=0$, possible since $b_2(X)\geq 3$. 
Then let us see that the elements of (\ref{eqn:aaa}) with
$\gcd (a\cdot b_1,a\cdot b_2)=1$ are dense. Take any element $x$ in (\ref{eqn:aaa})
given by some $a$ and $k\geq 1$. Let $k_1=a\cdot b_1, k_2=a\cdot b_2 \in \ZZ$. Consider
$k_0$ a large integer containing all prime factors of both $k_1,k_2$. Then take
the element $x'$ given by $a'=k_0 a+a_0$, $k'=k_0 k$, which 
satisfies $|x'-x|\leq C|x|/k$. Note that we can suppose that $k$ is arbitrarily large in the expression of $x$.
Thus the set of such $x'$ is dense.

So consider an element $a$ with $\gcd(a\cdot b_1, a\cdot b_2)=1$ and a Seifert bundle $M$ with
$c_1(M/\mu)=ma+c_1(\widetilde{M}/\mu)=m[\tilde\omega]$ as above. Then 
$c_1(M/\mu) \cdot b_j= m(a\cdot b_j)$, $j=1,2$. Therefore if $c_1(M/\mu)$ is
divisible by some $\ell$, then $\ell | m$. So $c_1(\widetilde{M}/\mu)=c_1(M/\mu)-m a$ is divisible by 
$\ell$, and hence it is not a primitive class, contrary to hypothesis.
\end{proof}

Let $\pi:M\to X$ be any Seifert bundle.
We construct a connection $1$-form on $M\to X$ as follows. Take an orbifold covering
 $X= \bigcup U_\alpha$, with an  orbifold partition of unity $\{\rho_\alpha\}$.
For each $U_\alpha =\tilde U_\alpha/\ZZ_{m_\alpha}$ we have
$\pi^{-1}(U_\alpha)=(S^1\x \tilde U_\alpha)/\ZZ_{m_\alpha}$. Let
$\eta_\alpha=u_\alpha^{-1} d u_\alpha$, where $u_\alpha$ is the $S^1$-coordinate. Define
 $$
 \eta= \sum (\pi^*\rho_\alpha) \eta_\alpha.
 $$
This is an orbifold $1$-form and $F=d\eta=\sum d\rho_\alpha \wedge \eta_\alpha$
is the (orbifold) curvature $2$-form of $M\to X$.

For the circle fiber bundle $M/\mu \to X$, $\eta$ descends to a $1$-form $\bar\eta$ on
$M/\mu$. The fiber of $M/\mu$ is
parametrized by $\bar u_\alpha=u_\alpha^m$, $m=m(X)$.
So the connection $1$-form on $M/\mu$ equals $\bar\eta=m\eta$.
Its curvature is $m F$ and thus $c_1(M/\mu)=[m F]$. This implies that
 $$
 c_1(M/X)=\frac1m c_1(M/\mu)= [F].
 $$

The following result appears in \cite[p.\ 211]{BG}, where it is refered to \cite{Hat}. However
the proof in \cite{Hat} does not cover the orbifold case. So we have included a proof.

\begin{theorem} \label{thm:k-contact}
Let $(X,\omega,J,g)$ be an almost K\"ahler cyclic orbifold with 
$[\omega] \in H^2(X;\QQ)$, and let $\pi: M \to X$ be a Seifert 
bundle with $c_1(M/X)=[\omega]$. Then $M$ admits a K-contact structure 
$(\xi,\eta,\Phi,{g})$ such that $\pi^*(\omega)=d \eta$.
\end{theorem}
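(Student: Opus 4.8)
The plan is to build the contact form $\eta$ on $M$ directly as the connection $1$-form associated to a compatible connection on the Seifert bundle $\pi:M\to X$, arranging that its curvature is exactly the pull-back of $\omega$. First I would observe that since $c_1(M/X)=[\omega]\in H^2(X;\QQ)\cong H^2_{orb}(X)$, the orbifold de Rham class of $\omega$ equals the class $[F]$ of the curvature $2$-form $F=d\eta_0$ of the reference connection $1$-form $\eta_0=\sum\rho_\alpha\eta_\alpha$ constructed above. Hence there is an orbifold $1$-form $\beta\in\Omega^1_{orb}(X)$ with $\omega=F+d\beta$. Set $\eta=\eta_0+\pi^*\beta$; this is a genuine $1$-form on $M$ (the correction term is basic), it is $S^1$-invariant, and it satisfies $i_\xi\eta=1$ for the generator $\xi$ of the $S^1$-action (suitably normalized, using $m(X)$) and $d\eta=\pi^*F+\pi^*d\beta=\pi^*\omega$. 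In particular $i_\xi(d\eta)=i_\xi\pi^*\omega=0$ since $\xi$ is tangent to the fibers and $\omega$ is basic, so $\xi$ is the Reeb field of $\eta$, and the locus of the fibers being nondegenerate, $\eta\wedge(d\eta)^2=\eta\wedge\pi^*(\omega^2)>0$, so $\eta$ is a contact form.

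Next I would define the tensor $\Phi$ on $TM$. Using $\eta$ we have the splitting $TM=\ker\eta\oplus\la\xi\ra$, and the differential $d\pi$ restricts to an isomorphism $\ker\eta_p\xrightarrow{\ \cong\ }T_{\pi(p)}^{orb}X$ onto the orbifold tangent space. I would pull back the orbifold almost complex structure $J$ on $X$ along this isomorphism to define $\Phi$ on $\ker\eta$, and set $\Phi(\xi)=0$. Then $\Phi^2=-\Id+\xi\otimes\eta$ by construction. The compatibility $d\eta(\Phi X,\Phi Y)=d\eta(X,Y)$ follows because $d\eta=\pi^*\omega$ and $J$ is $\omega$-compatible on $X$, i.e.\ $\omega(Ju,Jv)=\omega(u,v)$; likewise $d\eta(\Phi X,X)=\omega(Jd\pi(X),d\pi(X))=g_X(d\pi(X),d\pi(X))>0$ for nonzero $X\in\ker\eta$, where $g_X$ is the orbifold metric $g(u,v)=\omega(u,Jv)$. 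Then I would define the Riemannian metric on $M$ by the prescribed formula $g_M(X,Y)=d\eta(\Phi X,Y)+\eta(X)\eta(Y)$; this is positive definite by the previous inequalities and makes $\pi$ a Riemannian submersion with totally geodesic fibers onto $(X,g_X)$ away from (and, in the orbifold sense, including) the isotropy locus.

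It remains to check that $\xi$ is Killing for $g_M$; this I expect to be the main point requiring care. The key observation is that $\xi$ generates the $S^1$-action, under which $\eta$ is invariant (both $\eta_0$ and the basic correction $\pi^*\beta$ are) and $\Phi$ is invariant (since $J$ lives downstairs on $X$ and $d\pi$ is $S^1$-invariant, the fibers being the $S^1$-orbits). Because $g_M$ is built functorially from $\eta$, $\Phi$ and $d\eta=\pi^*\omega$, it is automatically $S^1$-invariant, hence $\mathcal{L}_\xi g_M=0$, so $\xi$ is Killing. One subtlety is that at points with nontrivial isotropy the "orbifold tangent space" and the metric $g_X$ must be handled in a local uniformizing chart $(\tilde U,\phi,\ZZ_m)$: there the whole construction takes place $\ZZ_m$-equivariantly on $(S^1\times\tilde U)/\ZZ_m$ with the product data, so everything descends, and the semiregular/cyclic structure guarantees the models are exactly those of Proposition~\ref{prop:models}. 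I would also double-check the normalization constant relating $\xi$, the period, and $m(X)$ so that $c_1(M/X)=[\omega]$ matches $d\eta=\pi^*\omega$ on the nose (rescaling $\omega$ by a positive constant if necessary, which does not affect being K-contact). Finally, $\pi^*(\omega)=d\eta$ holds by construction, completing the proof.
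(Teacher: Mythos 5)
Your proposal is correct and follows essentially the same route as the paper: correct the reference connection $1$-form by a primitive $\beta$ of $\omega-F$ so that $d\eta=\pi^*\omega$, take $\xi$ from the $S^1$-action as Reeb field, transport $J$ to $\ker\eta$ via $d\pi$ to get $\Phi$, and observe that the resulting metric is $S^1$-invariant so $\xi$ is Killing. The only differences are cosmetic (sign conventions for $\beta$ and your extra remarks on normalization), so no further comparison is needed.
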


\begin{proof}
Take the (orbifold) connection $1$-form constructed above, and let 
$F=d\eta$ be its curvature. As $[F]=c_1(M/X)=[\omega]$, we have
that $F-\omega=d\beta$, for some orbifold $1$-form $\beta$. Then
we can change $\eta$ to $\eta'=\eta-\beta$, so that its curvature
is $F'=F-d\beta=\omega$.

Now the $1$-form $\eta$ is a smooth form on the total space $M$.
On each $\pi^{-1}(U)=(S^1\x\tilde U)/\ZZ_m$, we have that 
$d\eta=\omega$ is the $2$-form coming from $\tilde U$. So $\eta\wedge (d\eta)^2>0$,
and $\eta$ is a contact form. Now define the Reeb vector field $\xi$ as the
one given by the $S^1$-action, which clearly preserves $\eta$.
Define $H_p=\ker \eta_p$, and $\Phi:T_pM \to T_pM$ by
$\Phi(\xi)=0$ and $\Phi:H_p \to H_p$ 
as the almost complex structure $J_x:T_x\tilde U\to T_x\tilde U$, for $x=\pi(p)$,
under the isomorphism $H_p\cong T_x\tilde U$. This is well-defined since the
$S^1$-flow preserves the horizontal subspaces $H_p$. Clearly the Reeb flow
preserves $\Phi$.

Finally define the metric $g$ by declaring $H_p$ and $\xi_p$ orthogonal, $\xi_p$
unitary and $g$ is the metric on $H_p$ given by $\Phi$ and $\omega$. Then
the Reeb flow preserves $g$, i.e.,\ it acts by isometries. This means that
$(M,\xi,\eta,\Phi,{g})$ is a K-contact manifold.
\end{proof}

\begin{remark}\label{rem:Kim}
Theorem \ref{thm:k-contact} corrects a statement of \cite{Kim}, where
it is claimed that a K-contact structure can be constructed from an orbifold where
the isotropy locus is not a symplectic surface. This is wrongly used to construct examples
of K-contact manifold, and to conclude that the manifolds of \cite{Kollar} admit
K-contact structures, which is the main result of \cite{Kim}.
\end{remark}

%%%%%%%%%%%%%%%%%%%%%%%%%%%%%%%%%%%%%%%%%%%%%%%%%%%%%%%%%%%%%%%
\section{A symplectic $4$-manifold with many disjoint symplectic surfaces}\label{sec:5}
%%%%%%%%%%%%%%%%%%%%%%%%%%%%%%%%%%%%%%%%%%%%%%%%%%%%%%%%%%%%%%%

Now we move to the construction of a K-contact manifold which cannot admit a semi-regular Sasakian 
structure (Theorem \ref{thm:main}). For this, we need a symplectic manifold with many disjoint
symplectic surfaces which will be used to construct a Seifert bundle.

\begin{theorem}\label{thm:generators} There exists a simply connected symplectic $4$-manifold $X$ with 
$b_2=36$ and with $36$ disjoint surfaces $S_1, \ldots ,S_{36}$ such that
\begin{enumerate}
\item  $g(S_1)=\ldots =g(S_9)=1$, $g(S_{11})=\ldots =g(S_{19})=1$, 
$g(S_{21})=\ldots =g(S_{29})=1$, and $S_i\cdot S_i=-1$, for $i=1,\ldots,9,11,\ldots,19,21,\ldots,29$;
\item   $g(S_{10})=3$, $g(S_{20})=3$, $g(S_{30})=3$, and $S_j\cdot S_j=1$, $j=10,20,30$; 
\item $g(S_{31})=1$, $g(S_{32})=1$, $g(S_{33})=2$, and $S_{31}\cdot S_{31}=-1$, $S_{32}\cdot S_{32}=-1$,
 $S_{33}\cdot S_{33}=1$;
\item  $g(S_{34})=1$, $g(S_{35})=1$, $g(S_{36})=2$, and $S_{34}\cdot S_{34}=-1$, $S_{35}\cdot S_{35}=-1$,
$S_{36}\cdot S_{36}=1$.
\end{enumerate}
 The homology classes  $[S_j]$, $j=1, \ldots,36$, generate $H_2(X,\mathbb{Z})$. 
\end{theorem}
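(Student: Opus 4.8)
The plan is to build $X$ by a sequence of fiber sums (symplectic connected sums along tori and higher-genus surfaces) and blow-ups, starting from standard symplectic building blocks, keeping careful track of the homology. The key observation is that the $36$ surfaces split into four packages of nine, nine, nine, and nine (items (1)--(4)), and the three packages in (1)--(2) each consist of ten surfaces — nine genus-$1$ surfaces of square $-1$ plus one genus-$3$ surface of square $-1$ — which look like the exceptional configuration one gets after performing nine blow-ups on a genus-$3$ surface sitting inside some symplectic $4$-manifold, or after a knot-surgery/fiber-sum construction that produces a genus-$3$ fiber and then blowing up. So first I would fix an elliptic surface such as $E(1)=\CP^2\#9\overline{\CP}^2$, which carries a symplectic form, has $b_2=10$, and contains a symplectic torus $F$ of square $0$ together with nine disjoint exceptional spheres; then take $E(1)_k$, its fiber sum with itself, or use a Gompf-type construction to get a symplectic $4$-manifold containing a genus-$3$ symplectic surface together with nine disjoint symplectic tori, arrange the self-intersections to be $-1$ by blowing up once near each torus and near the genus-$3$ surface (each blow-up drops the square by $1$ and, after tubing in the exceptional sphere, preserves the genus). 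Doing this in three disjoint regions, or taking three copies glued together by fiber sums along further auxiliary surfaces, yields the $30$ surfaces of (1)--(2).

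The remaining six surfaces of (3)--(4) — two genus-$1$ surfaces of square $-1$, one genus-$2$ surface of square $+1$, repeated twice — I would obtain similarly: take a symplectic $4$-manifold (again an elliptic surface or a product like $T^2\times\Sigma_2$ or a ruled surface over $\Sigma_2$) that contains a genus-$2$ symplectic surface of positive square and two disjoint symplectic tori, then blow up once near each torus to push the squares down to $-1$. The point of choosing genus $\leq 3$ throughout is dictated by the needs of Theorem~\ref{thm:4g+5} referenced in the introduction; the genus-$2$ and genus-$3$ surfaces are present to supply the homologically nontrivial ``$+1$'' or large-genus classes so that the full collection can span $H_2$. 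I would then assemble all four packages into a single simply connected symplectic $X$ by performing symplectic fiber sums along disjoint auxiliary surfaces chosen so that the gluing identifies enough cycles to make $\pi_1(X)=1$ (Gompf's theorem guarantees the fiber sum stays symplectic and lets one control $\pi_1$), while keeping $S_1,\dots,S_{36}$ disjoint from the surfaces used for gluing, so they survive in $X$ with the stated genera and self-intersections.

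The final and most delicate point is the homology bookkeeping: I must verify that $b_2(X)=36$ and that the classes $[S_1],\dots,[S_{36}]$ form a $\ZZ$-basis of $H_2(X,\ZZ)$, not merely a $\QQ$-basis or a finite-index sublattice. For this I would compute $H_2$ additively at each stage via the Mayer--Vietoris/Gompf formula for fiber sums and the standard $H_2(\,\cdot\,\#\overline{\CP}^2)=H_2\oplus\ZZ\la E\ra$ for blow-ups, and then exhibit the intersection matrix of $(S_i\cdot S_j)$ explicitly: because the $S_i$ are pairwise disjoint it is diagonal with entries $-1$ (thirty times for the genus-$1$ families and the three genus-$3$ surfaces, twice for each pair in (3)--(4)) and $+1$ (for $S_{33}$ and $S_{36}$), so its determinant is $\pm1$, which forces the $[S_i]$ to be primitive and, together with the count $b_2=36$ and the fact that each blow-up contributes an exceptional class that is visibly in the span, to be a full integral basis. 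The main obstacle I anticipate is not any single gluing but precisely this simultaneous control of $\pi_1$, of disjointness of all thirty-six surfaces from one another and from the gluing loci, and of the integrality of the resulting lattice — i.e.\ choosing the building blocks and fiber-sum identifications so that everything lines up at once; this is what the ``delicate construction'' in this section must carry out in detail.
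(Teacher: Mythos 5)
Your high-level strategy (Gompf sums with copies of $E(1)$, blow-ups, and the closing observation that $36$ pairwise disjoint surfaces with unimodular diagonal intersection matrix inside a unimodular lattice of rank $36$ must form an integral basis) points in the right direction, and that last lattice argument is correct and is essentially how the paper justifies generation over $\ZZ$. But the constructive core is missing, and the step you do specify would fail. You propose to produce each package of items (1)--(2) by finding nine disjoint tori and a genus-$3$ surface and then ``blowing up once near each'' to push the squares to $-1$. Blowing up near (rather than at a point of) a surface does not change its square; and blowing up at a point of each of the $\sim 34$ surfaces whose square needs adjusting adds $\sim 34$ to $b_2$, so to land on $b_2=36$ the starting manifold would need $b_2\approx 2$ while containing $36$ disjoint symplectic surfaces, $34$ of them of square $0$ and orthogonal to the two spanning classes --- hence null-homologous, contradicting $\int\omega>0$ on a symplectic surface. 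The count does not close, and you never check it. The paper needs no blow-ups at all for the thirty surfaces of (1)--(2): it forms $Z=\TT^4\#_{T_{12}=F_2}E(1)_2\#_{T_{13}=F_3}E(1)_3\#_{T_{14}=F_4}E(1)_4$ and, via Lemma \ref{lem:glue}, tubes each of the nine exceptional sections of $E(1)$ (spheres of square $-1$ meeting the fiber once) to a separate parallel copy of the dual torus $T_{34}$ (resp.\ $T_{24},T_{23}$) across the fiber sum, yielding nine disjoint tori of square $-1$; a line, meeting the fiber three times, tubes to three parallel tori to give the genus-$3$ surface. Simple connectivity is concrete there too: $\pi_1(T_{12}),\pi_1(T_{13}),\pi_1(T_{14})$ jointly generate $\pi_1(\TT^4)$ (Lemma \ref{lem:pi1}), whereas your ``auxiliary surfaces chosen so that the gluing identifies enough cycles'' is not a mechanism.

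The second genuine gap is items (3)--(4). Importing a block such as $T^2\times\Sigma_2$ or a ruled surface over $\Sigma_2$ brings in fundamental group that a fiber sum along a torus will not kill, plus homology from the gluing locus that your $b_2$ budget does not absorb. The paper instead manufactures these six surfaces inside $Z$: two Lagrangian $(-2)$-spheres obtained by capping Lagrangian cylinders in $\TT^4$ with vanishing discs of the elliptic fibrations, each meeting a Lagrangian torus transversely once; a perturbation of $\omega$ (Lemma \ref{lemma:lagr-sympl}) makes all of them symplectic; and each sphere--torus pair is then disjoined by resolving intersections and performing exactly one blow-up, which is where the two extra $\overline{\CP^2}$ summands, the two exceptional classes, and the genus-$2$ surfaces of square $+1$ come from. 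Your sketch has no substitute for this, and it is precisely the delicate part. (One further caveat: the paper's construction actually yields $S_{10}^2=S_{20}^2=S_{30}^2=+1$ rather than $-1$ as in the statement of the theorem; any completed proof has to resolve that sign, though it is immaterial for the application in Corollary \ref{cor:last}.)
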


In the subsequent subsections we will construct such $X$. Our basic tools are Gompf symplectic sum, 
symplectic blow-up, elliptic and Lefschetz fibrations, and symplectic resolution of transverse intersections. 
We recall these tools following \cite{G-Stipsicz}.

\subsection{Symplectic resolution of transverse intersections}\label{subsec:resolution}
Let $X$ be a symplectic $4$-manifold and let $\Sigma_1$ and $\Sigma_2$ be embedded symplectic surfaces
intersecting transverseley and positively at a point $q\in X$. Then $\Sigma_1\cup\Sigma_2$ determines the 
homology class $[\Sigma_1]+[\Sigma_2]\in H_2(X,\mathbb{Z})$.  By Lemma $\ref{lem:symplectic orthogonal}$,
after slightly perturbing $\Sigma_1$ we can take Darboux coordinates 
$(z_1,z_2)$ in a $4$-ball neighbourhood $D$ of $q$, so that $\Sigma_1=\{z_1=0\}$ and $\Sigma_2=\{z_2=0\}$.
Then the union $\Sigma_1\cup\Sigma_2$ is described locally as 
 $$
  F=\{(z_1,z_2)\in D \,\,|\,\,z_1z_2=0,\,|z_1|^2+|z_2|^2\leq 1\}.
 $$
Cut out the pair $(D,F)$ and replace it with $(D,R)$, where $R\subset D$ is obtained by perturbing the subset
 $$
 R'=\{(z_1,z_2)\,\,|\,\,z_1z_2=\varepsilon,\,|z_1|^2+|z_2|^2\leq 1\},
 $$
for $\varepsilon>0$ sufficiently small, to achieve that $\partial F=\partial R\subset\partial D$. This construction 
replaces $\Sigma_1\cup\Sigma_2$ by a smooth symplectic surface of genus 
$g(\Sigma_1)+g(\Sigma_2)$, representing the homology class $[\Sigma_1]+[\Sigma_2]$. 
It does not change the ambient manifold.  We call this construction the resolution of the transverse intersection.

\subsection{Symplectic blow-up}\label{subsec:blowup}
Let $X$ be a symplectic $4$-manifold and $q\in X$. The symplectic blow-up of $X$ at $q$ is defined as follows.
Take the Darboux coordinates $(z_1,z_2)$ in a $4$-ball neighbourhood $D$ of $q$, and put the standard
complex structure $J$ on $D$. Consider 
 $$
 \tilde D=\{ ((z_1,z_2),[w_1,w_2])\in D\x \CP^1 \, |\, z_1w_2=z_2w_1\}.
 $$
Then there is a natural projection $q:\tilde D\to D$, such that $q:\tilde D -E\to D-\{(0,0)\}$ is a biholomorphism,
where $E=\{(0,0)\}\x \CP^1$, $q(E)=\{(0,0)\}$. We cut out $D$ from $X$ and replace it with $\tilde D$, obtaining
the manifold $\tilde X$. The symplectic form of $X$ and the natural symplectic form of $\tilde D$ (coming from
its K\"ahler structure) can be glued to give a symplectic structure for $\tilde X$. 
As a smooth $4$-manifold, $\tilde X=X\#\overline{\CP^2}$ and 
$E=\overline{\CP^1}\subset\overline{\CP^2}$ is called the exceptional sphere. 
Its homology class $[E]$ is denoted by $e\in H_2(X',\mathbb{Z})=
H_2(X,\mathbb{Z})\oplus H_2(\overline{\mathbb{C}P^2},\mathbb{Z})$ and satisfies $e\cdot e=-1$.

Now consider a symplectic surface $\Sigma\subset X$ and blow up a point $p\in \Sigma$. Then 
we can take coordinates $(z_1,z_2)$ such that $\Sigma=\{z_1=0\}$. The surface $\tilde\Sigma\subset \tilde X$
defined in $\tilde D$ by the equations $z_1=w_1=0$ is called the \emph{proper transform} of $\Sigma$. It is
symplectic, and $[\tilde\Sigma]=[\Sigma]-e$. Therefore $[\tilde\Sigma]^2=[\Sigma]^2-1$. Moreover, the
exceptional divisor $E$ is symplectic and intersects $\tilde\Sigma$ transversely. Actually, the symplectic resolution
of the intersection of $\tilde\Sigma\cup E$ is $\Sigma$.

If $\Sigma_1$ and $\Sigma_2$ are two symplectic surfaces in $X$ intersecting transversely and
positively at a point $p$, blowing-up at $p$ and taking the proper transforms,  
we get two disjoint symplectic surfaces $\tilde{\Sigma}_1 , \tilde{\Sigma}_2 \subset \tilde X$. 
This is proved by taking a Darboux chart such that $\Sigma_1=\{z_1=0\}$ and $\Sigma_2=\{z_2=0\}$, 
which is possible since $\Sigma_1,\Sigma_2$ intersect transversely and positively.

\subsection{Gompf symplectic sum}\label{subsec:sympl-sum}
The following construction is introduced in \cite{Gompf}. Let $M_1$ and $M_2$ be
closed symplectic $4$-manifolds, and $N_1\subset M_1$, $N_2\subset M_2$ symplectic surfaces of
the same genus and with $N_1^2=-N_2^2$. Fix a symplectomorphism $N_1\cong N_2$.
If $\nu_j$ is the normal bundle to $N_j$, then
there is a reversing-orientation bundle isomorphism $\psi: \nu_1\rightarrow \nu_2$. 
Identifying the normal bundles $\nu_i$ with the tubular neighbourhoods $\nu(N_j)$ of $N_j$ in $M_i$, one 
has a symplectomorphism  $\varphi: \nu(N_1) -N_1 \rightarrow \nu(N_2) -N_2$ by composing $\psi$ with 
the diffeomorphism $x\mapsto \frac{x}{||x||^2}$ that turns each punctured normal fiber inside out. 
The Gompf symplectic sum $M_1\#_NM_2$ is the manifold obtained from 
$(M_1-N_1) \sqcup (M_2-N_2)$ by gluing with $\varphi$ above. It is proved in \cite{Gompf} that 
this surgery yields a symplectic manifold, denoted $M=M_1\#_{N_1=N_2} M_2$.  
The Euler characteristic of the Gompf symplectic sum is given by 
$\chi(M)=\chi(M_1)+\chi(M_2)-2\chi(N)$, where $N=N_1=N_2$.

\begin{lemma} \label{lem:glue}
Suppose that $S_1\subset M_1$ and $S_2\subset M_2$ are symplectic surfaces intersecting
transversely and positively with $N_1,N_2$, respectively, such that $S_1\cdot N_1=S_2\cdot N_2=d$. Then
$S_1,S_2$ can be glued to a symplectic surface $S=S_1\# S_2 \subset M_1 \#_{N_1=N_2} M_2$
with self-intersection $S^2=S_1^2+S_2^2$ and genus $g(S)=g(S_1)+g(S_2)+d-1$.
\end{lemma}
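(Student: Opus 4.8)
The plan is to carry out the Gompf symplectic sum while simultaneously tracking what happens to the pieces $S_1$ and $S_2$. First I would set up the standard local model near $N := N_1 = N_2$: in a tubular neighbourhood $\nu(N_j) \cong \nu_j$, each $S_j$ meets $N_j$ transversely and positively in exactly $d$ points (counted with sign, but positivity means all $+1$), and near each such intersection point Lemma \ref{lem:symplectic orthogonal} lets me choose Darboux coordinates $(z,w)$ with $N_j = \{z = 0\}$ and $S_j = \{w = 0\}$. In particular, over a punctured disk transverse to $N_j$ at an intersection point, the surface $S_j$ is a section of the normal bundle $\nu_j$, i.e.\ the ``radial'' direction; the total number of these local sheets is $d$ on each side.

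Next I would recall that the gluing map $\varphi \colon \nu(N_1) - N_1 \to \nu(N_2) - N_2$ is built from an orientation-reversing bundle isomorphism $\psi \colon \nu_1 \to \nu_2$ composed with the fibrewise inversion $x \mapsto x/\|x\|^2$. The key geometric point is that fibrewise inversion turns a small radial arc (a piece of $S_1$ running outward from $N_1$) into a radial arc running \emph{outward from $N_2$} in the glued manifold — more precisely, after the inversion the $d$ sheets of $S_1$ emanating near an intersection point match up with $d$ sheets of $S_2$ across the gluing region, because $\psi$ is a bundle map covering the fixed symplectomorphism $N_1 \cong N_2$ and the $d$ intersection points on $N_1$ can be arranged (by an isotopy of $S_1$ or $S_2$ supported near $N$) to correspond under this identification to the $d$ intersection points on $N_2$. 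Thus $(S_1 - N_1) \cup_\varphi (S_2 - N_2)$ is a smooth embedded surface $S$ in $M_1 \#_{N_1 = N_2} M_2$, and it is symplectic because $\varphi$ is a symplectomorphism and $S_1, S_2$ were symplectic (one may have to smooth/interpolate the matching in the collar, exactly as in the standard Gompf argument, keeping the surface symplectic throughout).

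It remains to compute the two numerical invariants. For the genus: $S$ is obtained from $S_1 \sqcup S_2$ by deleting, on each side, $d$ small open disks (neighbourhoods in $S_j$ of the $d$ intersection points with $N_j$) and gluing the resulting $2d$ boundary circles in $d$ pairs along annuli in the collar. Equivalently, $S$ is the fibre sum of $S_1$ and $S_2$ along $d$ points, so $\chi(S) = \chi(S_1) + \chi(S_2) - 2d$; writing $\chi(S_j) = 2 - 2g(S_j)$ and $\chi(S) = 2 - 2g(S)$ gives $g(S) = g(S_1) + g(S_2) + d - 1$. For the self-intersection: $[S]^2$ can be computed from a pushoff. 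A normal pushoff of $S_1$ inside $M_1$ meets $S_1$ in $S_1^2$ points away from $N$; similarly for $S_2$; and one checks that the gluing contributes nothing extra to the intersection number — the collar region carries no new intersections because the inversion diffeomorphism preserves the (co)normal framings up to the orientation reversal that is already accounted for in the sum formula. Hence $[S]^2 = S_1^2 + S_2^2$. The step I expect to be the main obstacle is the middle one: verifying carefully that the $d$ sheets of $S_1$ near $N_1$ are carried by $\varphi$ precisely onto the $d$ sheets of $S_2$ near $N_2$ and that the resulting surface can be kept symplectic across the collar — this requires pinning down the local normal forms and an isotopy matching the intersection points on the two sides, all compatibly with the symplectomorphism $N_1 \cong N_2$ fixed in the construction.
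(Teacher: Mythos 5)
Your proposal is correct and follows essentially the same route as the paper: arrange the symplectomorphism $N_1\cong N_2$ to match the $d$ intersection points, use Lemma~\ref{lem:symplectic orthogonal} to put the sheets of $S_j$ in symplectically orthogonal normal form so they glue to a symplectic surface across the collar, and then read off the genus and self-intersection (which the paper dismisses as straightforward and you compute explicitly via the Euler characteristic and a pushoff). No issues.
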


\begin{proof}
When doing the Gompf symplectic sum of $M_1,M_2$ along $N_1,N_2$, we arrange
the symplectomorphism $N_1\cong N_2$ to take the intersection points
$S_1\cap N_1$ to the points $S_2\cap N_2$. Then 
we have to take tubular neighbourhoods of $N_j$ by using
the symplectic orthogonal to $T_pN_j$ at each $p\in N_j\cap S_j$. If $S_j$ and $N_j$
intersect orthogonally with respect to the symplectic form, then $S_1$ and $S_2$
glue nicely to give a symplectic surface $S$ in the Gompf connected sum.
We can arrange that the intersection becomes orthogonal after a small symplectic
isotopy around the intersection point, as done in Lemma \ref{lem:symplectic orthogonal}.
The claim about the self intersection and the genus are straightforward.
\end{proof}

\subsection{Elliptic fibrations}\label{subsec:elliptic}
We begin with some recollections on elliptic and Lefschetz fibrations from \cite{Gompf,G-Stipsicz}.
A complex surface $S$ is an elliptic fibration if there is a holomorphic map $f: S\rightarrow C$ 
to a complex curve $C$ such that for generic $t\in C$ the preimages $f^{-1}(t)$ are smooth elliptic curves. 
The elliptic fibration $E(1)$ is defined on $\CP^2$ blown-up at $9$ points as follows. Take two generic cubics in 
$\CP^2$ given by polynomials $p_0([x:y:z])=0$, $p_1([x:y:z])=0$. 
These cubics intersect in $9$ points $p_1,\ldots, p_9$. 
Consider the pencil of cubics $t_0p_0+t_1p_1$ parametrized by $[t_0:t_1]\in\CP^1$. For any point
$q\in\CP^2- \{p_1,\ldots,p_9\}$ there is only one cubic $t_0p_0+t_1p_1$ going through $q$. 
This defines a map 
 $$
 f:\CP^2 - \{p_1,\ldots, p_9\}\rightarrow \CP^1,\quad f(q)=[t_0:t_1].
 $$
Blowing up $\CP^2$ at $p_1,\ldots ,p_9$, we get a K\"ahler surface $E(1)=\CP^2 \# 9 \overline{\CP^2}$
and the map $f$ extends to a $f:E(1)\to \CP^1$, which is an elliptic fibration. 

We will use the notion of vanishing cycle. Let $X$ be a K\"ahler manifold. A Lefschetz fibration on $X$ is a 
holomorphic map $f: X\rightarrow\Sigma$, where $\Sigma$ is a complex curve such that each critical 
point of $f$ has a local (complex) coordinate chart on which $f(z_1,z_2)= z^2_1+z_2^2$. Hence
a regular fiber $F_t=f^{-1}(t)$ of the Lefschetz fibration is given locally by the equation 
$z_1^2+z_2^2=t$, and we can suppose $t>0$ multiplying $(z_1,z_2) \in F_t$ by some complex number. 

For $\epsilon>0$ real and positive, the intersection 
$F_t\cap\mathbb{R}^2\subset\mathbb{C}^2$ yields a circle $x_1^2+x_2^2=\epsilon$ (here $z_j=x_j+iy_j$). 
This circle bounds a disc $D_\epsilon$ in $X$ defined by $\{(z_1,z_2) \in F_t \cap\RR^2 \, | \, t\in [0,\epsilon]\}=
X\cap \RR^2\cap B_\epsilon(0)$, which is called the vanishing cycle of the critical point. This is
an embedded disc of self-intersection $-1$ and Lagrangian with respect to the symplectic structure of $X$.
We refer to \cite{G-Stipsicz} for the detailed exposition of the theory of elliptic and Lefschetz fibrations.

Let us summarize the properties of $E(1)$ which will be used later, from \cite{BPV,G-Stipsicz}.

\begin{proposition}\label{prop:9-blowup} The elliptic fibration $E(1)$ has the following properties.
\begin{enumerate}
\item $\pi_1(E(1))=\{e\}$, $\chi(E(1))=12$, $b_2(E(1))=10$.
\item Every exceptional sphere $E_i$ of the blow-up at a point $p_i$ is a section of the  elliptic 
fibration $f:E(1)\rightarrow\CP^1$, hence there are $9$ disjoint sections.
\item Let $h\in H_2(\CP^2,\mathbb{Z})$ be the homology class of the line $L\subset \CP^2$, and 
$e_i$ are homology classes of exceptional spheres $E_i$, then
$H_2(E(1),\ZZ)=\langle h,e_1,\ldots ,e_9\rangle$.
\item Let $F$ be a generic fiber of $f:E(1)\rightarrow\CP^1$. Then $\pi_1(E(1) -  F)=\{e\}$.
\item The homology class of $F$ is $[F]=3h-e_1-\ldots-e_9$. Hence a generic line $L\subset\CP^2$ 
intersects $F$ transversely in $3$ points.
\item For generic cubics, $f: E(1)\rightarrow\CP^1$ is a Lefschetz fibration.
\item In a fiber $F$ there are $12$ vanishing cycles; they come in two packets of six
$1$-cycles homologous to $a$ and six $1$-cycles homologous to $b$, where $\{a,b\}$ is a basis
of $H_1(F,\ZZ)$.
\end{enumerate}
\end{proposition}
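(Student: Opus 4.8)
The plan is to establish the seven properties in three groups, organized by the tools each requires: elementary blow-up topology for (1), (3), (5); the local geometry of the pencil of cubics for (2), (6); and the global monodromy of the elliptic fibration for (4), (7).

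First I would dispose of the purely topological statements. Since $E(1)=\CP^2\#9\overline{\CP^2}$ as a smooth manifold, the fundamental group of a connected sum gives $\pi_1(E(1))=\pi_1(\CP^2)*\pi_1(\overline{\CP^2})*\cdots*\pi_1(\overline{\CP^2})=\{e\}$, since every factor is trivial; this is the first claim of (1). Each complex blow-up raises $\chi$ by $1$ (a point, $\chi=1$, is replaced by a $\CP^1$, $\chi=2$), so $\chi(E(1))=\chi(\CP^2)+9=12$; as $E(1)$ is simply connected, $b_2=\chi-2=10$. Statement (3) is the standard computation $H_2(\CP^2\#9\overline{\CP^2},\ZZ)=\langle h\rangle\oplus\bigoplus_{i=1}^9\langle e_i\rangle$ with intersection form $\mathrm{diag}(1,-1,\ldots,-1)$. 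For (5), every member of the pencil is a cubic passing simply through each base point $p_i$, so its proper transform has class (total transform minus exceptionals) $3h-e_1-\cdots-e_9$; this is $[F]$. The transversality count then follows from $h\cdot[F]=h\cdot(3h-\sum e_i)=3h^2=3$, using $h\cdot e_i=0$.

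Next I would treat (2) and (6), which are local in nature. For (2), after blowing up $p_i$ the exceptional sphere $E_i$ is identified with $\PP(T_{p_i}\CP^2)\cong\CP^1$, and the proper transform of a pencil member meets $E_i$ at the point recording its tangent direction at $p_i$. For a generic pencil these tangent directions sweep out all of $\PP(T_{p_i}\CP^2)$ bijectively as the member varies, so $f|_{E_i}\colon E_i\to\CP^1$ is a holomorphic bijection, hence an isomorphism: $E_i$ is a section. The nine sections are disjoint because distinct exceptional divisors are disjoint. For (6), I would invoke the classical fact that a generic pencil of plane cubics has only nodal singular members; near a node a local holomorphic change of coordinates brings $f$ to the normal form $z_1^2+z_2^2=t$, which is exactly the Lefschetz condition. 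The number of singular fibers is pinned down by the Euler characteristic: with generic fiber a torus ($\chi=0$) and each nodal fiber a pinched torus ($\chi=1$), the multiplicativity formula $\chi(E(1))=\chi(\CP^1)\,\chi(F)+\#\{\text{singular fibers}\}\cdot(1-0)$ gives $12=0+\#\{\text{singular fibers}\}$, so there are exactly $12$ of them.

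Finally, for (4) and (7) I would pass to the monodromy of the Lefschetz fibration. Restricting to $\CP^1$ minus the twelve critical values, the monodromy is a representation into the mapping class group of the torus, and the global relation that the product of the twelve local monodromies is trivial is, for $E(1)$, the standard positive factorization $(\tau_a\tau_b)^6=1$, where $\tau_a,\tau_b$ are right-handed Dehn twists about a basis $\{a,b\}$ of $H_1(F,\ZZ)$; this exhibits the twelve vanishing cycles as six copies of $a$ and six of $b$, giving (7). Statement (4) then follows: writing $t_0=f(F)$, we have $E(1)-F=f^{-1}(\CP^1-\{t_0\})$, and since $\CP^1-\{t_0\}\cong\CC$ is contractible, $\pi_1(E(1)-F)$ is a quotient of $\pi_1(F)=\ZZ^2$ by the subgroup generated by the vanishing cycles; as these include both $a$ and $b$, the quotient is trivial. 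The main obstacle is the precise singular-fiber analysis underlying (6) and (7)—that a generic pencil has exactly twelve nodal fibers and that the resulting word in the mapping class group is $(\tau_a\tau_b)^6$—but both are classical results on $E(1)$ and may be quoted from \cite{BPV,G-Stipsicz} rather than rederived.
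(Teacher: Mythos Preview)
The paper does not actually prove this proposition: it is stated as a summary of standard facts about $E(1)$, with the sentence ``Let us summarize the properties of $E(1)$ which will be used later, from \cite{BPV,G-Stipsicz}'' and no proof environment following it. Your proposal therefore goes well beyond what the paper does.

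That said, your outline is correct. The topological items (1), (3), (5) follow immediately from $E(1)\cong\CP^2\#9\overline{\CP^2}$ as you indicate; the section property (2) and the Lefschetz condition (6) are exactly the standard local arguments for a generic pencil of cubics; and your handle-theoretic argument for (4)---that a Lefschetz fibration over a disk has $\pi_1$ equal to $\pi_1(\text{fiber})$ modulo the normal closure of the vanishing cycles---together with the factorization $(\tau_a\tau_b)^6=1$ for (7) is the correct route. Your one concession, namely quoting the monodromy word $(\tau_a\tau_b)^6$ from \cite{G-Stipsicz} rather than rederiving it, is entirely in keeping with how the paper treats the whole proposition.
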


We have the following result. 

\begin{lemma} \label{lem:pi1}
Let $X$ be a symplectic $4$-manifold with an embedded symplectic surface $T\subset X$ of self-intersection zero
and genus $1$. Then the Gompf connected sum $X'=X\#_{T=F}E(1)$ has fundamental group 
$\pi_1(X')=\pi_1(X)/H$, where $H$ is the normal subgroup generated by the image of $\pi_1(T)\to \pi_1(X)$.
\end{lemma}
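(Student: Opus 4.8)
The statement is a Seifert–van Kampen computation for the Gompf symplectic sum $X' = X \#_{T=F} E(1)$. The plan is to cover $X'$ by two open sets: a neighbourhood $U_1$ of $X - T$ (slightly thickened into the gluing region) and a neighbourhood $U_2$ of $E(1) - F$, with $U_1 \cap U_2$ deformation retracting onto the $S^1$-bundle of the common boundary, i.e.\ onto the unit normal circle bundle of $T$ (equivalently of $F$). Since $T$ has self-intersection zero, this normal circle bundle is the trivial bundle $T \times S^1$, so $\pi_1(U_1 \cap U_2) \cong \pi_1(T) \times \ZZ$, generated by the loops coming from $\pi_1(T)$ together with the meridian $\mu$ of $T$ (the $S^1$-fiber direction).

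First I would record the two relevant facts. On the $E(1)$ side, $U_2 \simeq E(1) - F$ and by Proposition~\ref{prop:9-blowup}(4) this is simply connected; moreover the meridian $\mu$ of $F$, viewed inside $E(1) - \nu(F)$, is already nullhomotopic there (it bounds a disc because a section $E_i$ meets $F$ in one point, so $E_i - (\text{disc})$ provides a nullhomotopy — or simply because $\pi_1(E(1)-F)=1$ kills it). On the $X$ side, $U_1 \simeq X - T$ with a collar; the inclusion $X - \nu(T) \inc X$ induces a surjection on $\pi_1$ whose kernel is exactly the normal closure of the meridian $\mu$ together with... more precisely, the standard fact is that $\pi_1(X) = \pi_1(X - T) / \langle\langle \mu \rangle\rangle$ when $T$ is connected.

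Then Seifert–van Kampen gives
\[
  \pi_1(X') \;=\; \pi_1(X-T) \ast_{\pi_1(T)\times\ZZ} \pi_1(E(1)-F)
  \;=\; \pi_1(X-T) \big/ \big\langle\!\big\langle\, \mu,\ \iota_*(\gamma)\ (\gamma\in\pi_1(T)) \,\big\rangle\!\big\rangle ,
\]
where $\iota \colon T \inc X$ and on the right $\mu$ dies because it is nullhomotopic in $U_2$, while each $\gamma \in \pi_1(T)$ gets identified, through $U_1\cap U_2$, with its image in the simply connected $\pi_1(U_2)=1$. Since killing $\mu$ already collapses $\pi_1(X-T)$ onto $\pi_1(X)$, the remaining relations say precisely that the image of $\pi_1(T) \to \pi_1(X)$ is killed. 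Hence $\pi_1(X') = \pi_1(X)/H$ with $H$ the normal subgroup generated by $\operatorname{im}(\pi_1(T)\to\pi_1(X))$, as claimed. (Note the basepoint must be taken on $T$; the meridian-based loops in $U_1\cap U_2$ are joined to the basepoint along $T$, which is why only $\pi_1(T)$ and $\mu$ appear as generators of the intersection group.)

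**Main obstacle.** The delicate point is the bookkeeping of the intersection $U_1 \cap U_2$ and the two identifications of its fundamental group — one must be careful that the gluing diffeomorphism $\varphi$ of the Gompf sum, which inverts the normal fibers ($x \mapsto x/\|x\|^2$), sends the meridian $\mu$ of $T$ to (plus or minus) the meridian of $F$ and acts as a symplectomorphism $T \cong F$ on the base, so that the two inclusion-induced maps $\pi_1(T)\times\ZZ \to \pi_1(U_j)$ are the naive ones up to this identification. Verifying that $\mu$ is indeed nullhomotopic in $E(1) - \nu(F)$ (not merely in $E(1)$) is the one substantive geometric input, and it follows from the existence of a section $E_i$ of the elliptic fibration meeting $F$ transversely in a single point (Proposition~\ref{prop:9-blowup}(2),(5)): removing from $E_i$ a small disc around that point gives a disc in $E(1)-\nu(F)$ bounded by $\mu$. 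Everything else is the standard van Kampen argument for surgery along a codimension-two submanifold, and I would cite \cite{Gompf} for the general form of this computation.
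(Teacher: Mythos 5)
Your proof is correct and follows essentially the same route as the paper: Seifert--van Kampen applied to $X'=(X-\nu(T))\cup_B(E(1)-\nu(F))$ with $B\cong T\times S^1$, using that $\pi_1(E(1)-\nu(F))$ is trivial so that the whole image of $\pi_1(B)$ must be killed in $\pi_1(X-\nu(T))$. The only cosmetic difference is that you identify the kernel explicitly via the meridian and the $\pi_1(T)$-loops, whereas the paper reaches the same conclusion by comparing with the second van Kampen decomposition $X=(X-\nu(T))\cup_B\nu(T)$; both are the same computation.
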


\begin{proof}
By definition $X'=(X-\nu(T)) \cup_B (E(1)-\nu(F))$, where $B=\partial (X-\nu(T))=
\partial (E(1)-\nu(F)) \cong \TT^3$. Applying Seifert-Van Kampen theorem, $\pi_1(X')$ is isomorphic to the
amalgamated product $\pi_1(X-\nu(T)) \ast_{\pi_1(B)} \pi_1(E(1)-\nu(F))$. Since $\pi_1(E(1)-\nu(F))
=\{1\}$, this is isomorphic to the quotient of $\pi_1(X-\nu(T))$ by the image of $\pi_1(B)$. 
Using Seifert-Van Kampen theorem for $X=(X-\nu(T)) \cup_B \nu(T)$, $\pi_1(X)$ is isomorphic to 
$\pi_1(X-\nu(T)) \ast_{\pi_1(B)} \pi_1(\nu(T))$. Therefore the quotient of 
$\pi_1(X)$ by the image of $\pi_1(T)$ equals the quotient of $\pi_1(X-\nu(T))$ by the
image of $\pi_1(B)$. The result follows.
\end{proof}

\subsection{Making Lagrangian submanifolds symplectic}\label{subsec:lagrangian}
We will need a slight modification of  Lemma 1.6  in \cite{Gompf}.

\begin{lemma}\label{lemma:lagr-sympl} Let $(M,\omega)$ be a $4$-dimensional  compact symplectic manifold. 
Assume that  $[F_1],\ldots ,[F_k] \in H_2(M,\ZZ)$ are linearly independent homology classes represented by 
$k$ Lagrangian surfaces $F_1,\ldots F_k$ which intersect transversely and not three of them intersect in
a point. Then there is an arbitrarily small perturbation $\omega''$ of the symplectic form $\omega$ such that 
all $F_1,\ldots,F_k$ become symplectic.
\end{lemma}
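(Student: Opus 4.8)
The plan is to reduce the lemma to the construction of a single closed $2$-form $\rho$ on $M$ with $\rho|_{F_i}>0$ (a positive area form, for a fixed orientation of each $F_i$) for every $i$, and then to take $\omega''=\omega+\epsilon\rho$ for $\epsilon>0$ small. Being symplectic is an open condition, so $\omega''$ is symplectic for $\epsilon$ small; since $\omega|_{F_i}=0$ (the $F_i$ are Lagrangian), we get $\omega''|_{F_i}=\epsilon\,\rho|_{F_i}>0$, so every $F_i$ becomes symplectic for $\omega''$; and $\omega''\to\omega$ as $\epsilon\to0$, which gives the required smallness. Hence the entire content lies in constructing $\rho$, and the argument refines \cite[Lemma~1.6]{Gompf}, the extra difficulty being that the $F_i$ are not assumed disjoint.

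To build $\rho$, I would first use the linear independence of $[F_1],\dots,[F_k]$ in $H_2(M;\RR)$ together with Poincar\'e duality to choose de Rham classes $a_1,\dots,a_k\in H^2(M;\RR)$ with $\la a_i,[F_j]\ra=\delta_{ij}$, and then produce, for each $i$, a closed $2$-form $\eta_i$ in the class $a_i$ enjoying three properties. First, $\eta_i|_{F_i}$ is a positive area form on all of $F_i$: starting from any closed representative of $a_i$, since $H^2_{DR}(F_i)=\RR$ its restriction equals a positive area form plus an exact $2$-form $d\tau$, and subtracting $d\tilde\tau$ for an extension $\tilde\tau$ of $\tau$ off $F_i$ (using a Weinstein tubular neighbourhood of $F_i$) removes the exact part without leaving $a_i$. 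Second, near each intersection point $p\in F_i\cap F_j$, $j\ne i$, there are Darboux coordinates $(x_1,x_2,y_1,y_2)$ with $F_i=\{y=0\}$, $F_j=\{x=0\}$ --- two transverse Lagrangians through a point can be simultaneously straightened to these linear models by a symplectomorphism --- in which $\eta_i=dx_1\wedge dx_2+\psi_i(y)\,dy_1\wedge dy_2$ for a bump function $\psi_i\ge0$ with $\psi_i(0)=1$ supported in an arbitrarily small ball; this form is closed, restricts to $dx_1\wedge dx_2>0$ on $F_i$ and to $\psi_i(y)\,dy_1\wedge dy_2\ge0$ on $F_j$, with $\int$-contribution near $p$ of order $(\text{radius})^2$. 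Here the hypothesis that no three of the $F_i$ meet at a point is used so that these local modifications take place in pairwise disjoint coordinate balls. Third, on $F_j$ minus arbitrarily small neighbourhoods of the points of $F_i\cap F_j$, the restriction $\eta_i|_{F_j}$ is $C^0$-small: because $\la a_i,[F_j]\ra=0$, one has complete freedom to prescribe $\eta_i|_{F_j}$ to be any $2$-form on $F_j$ of total integral $0$ (extend it via a Weinstein neighbourhood of $F_j$ and correct $\eta_i$ by an exact form), so take it to be the small ``bumps'' of the second property plus a correction of total mass equal to theirs but spread out over all of $F_j$, hence $C^0$-small; this correction is exact near $F_i$ and so does not disturb the first two properties.

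Finally I would set $\rho=\sum_{i=1}^k\eta_i$ and verify $\rho|_{F_j}>0$ for each $j$. Away from all intersection points, $\rho|_{F_j}=\eta_j|_{F_j}+\sum_{i\ne j}\eta_i|_{F_j}$ is a fixed positive area form bounded below plus a $C^0$-small error, hence positive once the neighbourhoods and bump supports above are chosen small enough. Near an intersection point $p\in F_i\cap F_j$, only $\eta_i$ and $\eta_j$ fail to be small; in the adapted Darboux chart one finds $\eta_i|_{F_j}=\psi_i(y)\,dy_1\wedge dy_2$ and $\eta_j|_{F_j}=dy_1\wedge dy_2$, so $\rho|_{F_j}=\bigl(1+\psi_i(y)\bigr)\,dy_1\wedge dy_2+(\text{small})>0$, and symmetrically $\rho|_{F_i}>0$ near $p$. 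This gives $\rho|_{F_i}>0$ everywhere for every $i$, and the lemma follows from the opening reduction. The main obstacle is exactly the construction of the forms $\eta_i$: arranging a single closed form to lie in the prescribed dual class, to restrict to a positive area form on $F_i$ everywhere, and to restrict to a controlled form on each other $F_j$ --- $C^0$-small away from the double points and a small non-negative bump near them --- all glued coherently by means of Weinstein neighbourhoods and the no-triple-point hypothesis.
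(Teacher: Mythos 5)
Your proposal is correct in its overall strategy and this is also the strategy of the paper (and of Gompf's Lemma~1.6): everything reduces to producing a closed $2$-form that restricts to a positive area form on every $F_i$, and then adding $\epsilon$ times it to $\omega$. Where you diverge is in the construction of that form. The paper uses the linear independence to pick a \emph{single} closed form $\eta$ with $\int_{F_i}\eta=1$ for all $i$, chooses area forms $\omega_i$ on each $F_i$ with $\int_{F_i}\omega_i=1$, writes $\omega_i-j_i^*\eta=d\alpha_i$, and corrects $\eta$ by the globally exact forms $d(\rho_i\, p_i^*\alpha_i)$, where $p_i$ is a tubular-neighbourhood projection of $F_i$ arranged to collapse each transversally intersecting surface to a point near the intersection. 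That arrangement makes $j_j^*\bigl(d(\rho_i\,p_i^*\alpha_i)\bigr)=0$ for $j\neq i$, so the corrected form restricts to exactly $\omega_i$ on each $F_i$ and no estimates are needed; this is where the transversality and no-triple-point hypotheses enter. Your version, with $k$ dual classes $a_i$, adapted Darboux models $dx_1\wedge dx_2+\psi_i(y)\,dy_1\wedge dy_2$ near double points, and $C^0$-smallness away from them, achieves only positivity rather than exact equality, and it buys that at the cost of the gluing you yourself flag as the main obstacle. In particular, the local bump $\psi_i(y)\,dy_1\wedge dy_2$ is not compactly supported in the Darboux ball (its support is a slab in $y$), so a naive cutoff $\chi(x)\psi_i(y)\,dy_1\wedge dy_2$ is not closed; you need to realize it as $d(\chi(x)\beta)$ for a primitive $\beta$ of $\psi_i(y)\,dy_1\wedge dy_2$ depending only on $y$ (which restricts to $0$ on $\{y=0\}$, so positivity on $F_i$ is preserved). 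With that point repaired your argument goes through, but the paper's collapsing-projection device accomplishes all of this in one stroke and is the cleaner route.
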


\begin{proof} 
Since $[F_1],\ldots, [F_k]$ are linearly independent, there exists a closed $2$-form $\eta$
such that $\int_{F_i}\eta=1$, for all $i=1,\ldots, k$. Take symplectic (volume) forms $\omega_i$ 
on $F_i$ such that $\int_{F_i}\omega_i=1$. Then $\int_{F_i}(\omega_i-j^*_i\eta)=0$ so there are $1$-forms
$\alpha_i$ on $F_i$ such that $\omega_i-j^*_i\eta=d\alpha_i$.
 
We extend $\alpha_{i}$ to a tubular neighbourghoods $U_i$ of $F_i$ by pulling-back via a projection $p_i:U_i\to F_i$.
We arrange this projection to project any surface intersecting $F_i$ to a point. Then we extend $p_i^*\alpha_i$
to the whole of $M$ by multiplying with a cut-off function $\rho_i$ which is $0$ off a neighbourhood of $F_i$
and $1$ in a smaller neighbourhood. Set $\eta'=\eta+\sum_{j}d(\rho_j (p_j^*\alpha_{j}))$. 
Clearly, $d\eta'=d\eta=0$ and $j_{i}^*\eta'=\omega_{i}$, for all $i$. The form $\omega¡=\omega+\epsilon\eta'$ 
is symplectic for small $\epsilon>0$, and all $F_i$ are symplectic with respect to $\omega'$. 
\end{proof} 

\subsection{First step: a configuration of  tori in $\TT^4$}\label{subsec:tori}
Let $\mathbb{T}^4=\mathbb{R}^4/\mathbb{Z}^4$, with coordinates $x_1,\ldots,x_4$.
There are six embedded tori:
 \begin{align*}
 & T_{12}=\{(x_1,x_2,\alpha_3,\alpha_4)\}\subset\mathbb{T}^4,\,
 T_{34}=\{(\alpha_1,\alpha_2,x_3,x_4)\}\subset\mathbb{T}^4, \\
 &T_{23}=\{(\beta_1,x_2,x_3,\beta_4)\}\subset\mathbb{T}^4,\,
 T_{14}=\{(x_1,\beta_2,\beta_3,x_4)\}\subset\mathbb{T}^4,\\
 & T_{13}=\{(x_1,\gamma_2,x_3,\gamma_4)\}\subset\mathbb{T}^4,\,
 T_{24}=\{(\gamma_1,x_2,\gamma_3,x_4)\}\subset\mathbb{T}^4,
 \end{align*}
where $\alpha_i,\beta_i,\gamma_i$ are generic numbers which may be fixed when necessary. 
We get a configuration of six tori intersecting transversely in pairs $T_{12}\cap T_{34}$, 
$T_{23}\cap T_{14}$ and $T_{13}\cap T_{24}$, each pair intersects in a single point. 
The choice of the generic numbers ensures that one can have ``parallel'' disjoint copies 
$T_{ij}'$ of $T_{ij}$.
 
Consider a symplectic form 
 $$
 \omega=dx_1\wedge dx_2+dx_3\wedge dx_4+dx_2\wedge dx_3+ \delta \, dx_1\wedge dx_4
 +  dx_2\wedge dx_4 - \delta\, dx_1\wedge dx_3,
 $$ 
where $\delta>0$ is small. Note that all $T_{ij}$ are symplectic
with respect to $\omega$, where $T_{13}$ is given the reversed orientation.
From here we easily see that the following intersections are positive
$[T_{12}] \cdot [T_{34}]>0$ , $[T_{13}] \cdot [T_{24}]>0$ , $[T_{14}] \cdot [T_{23}] >0$.

Consider now the following specific collection of three disjoint $2$-tori, which are symplectic in $(\TT^4,\omega)$,
 \begin{align*}
  T_{12} &=\{(x_1,x_2,0,0)\}\subset\mathbb{T}^4, \\
 T_{13} &=\{(x_1,0,x_3,\frac12 )\}\subset \mathbb{T}^4,\\
 T_{14} &=\{(x_1,\frac12,\frac12,x_4)\}\subset\mathbb{T}^4
 \end{align*}
We shall do a Gompf connected sum along each of $T_{12}, T_{13}$ and $T_{14}$. For this, 
we cut out tubular neighbourhoods of $T_{12}, T_{13}$ and $T_{14}$ of some small radius $\varepsilon>0$. 
\begin{align*}
  Y &= \mathbb{T}^4  -  (\nu(T_{12}) \cup \nu(T_{13})\cup \nu(T_{14}))  \\ &=
 \{(x_1,x_2,x_3,x_4)\,\,|\,\,|(x_3,x_4)|\geq \varepsilon, \, |(x_2,x_4-\frac12)|\geq\varepsilon, \, 
|(x_2-\frac12,x_3-\frac12)|\geq\varepsilon\}. 
 \end{align*}
We shall denote $\bd_{1j}Y=\bd \nu(T_{1j})$, $j=2,3,4$, the three connected components
of the boundary $\bd Y$.

Let us describe a configuration of certain Lagrangian tori and cylinders in $Y$ to be used later.
\begin{align*}
 C_1 &=\{\left(x_1,- \delta (\frac12-2\varepsilon) (t-1), 0, \varepsilon+ (\frac12-2\varepsilon) t\right) ,\,t\in [0,1]\},\\
 C_2 &=\{\left(x_1, \frac12 +\delta (\frac12-2\varepsilon) (t-1), \varepsilon+ (\frac12-2\varepsilon) t,0 \right), \,t\in [0,1]\}, \\ 
 T_1 &=\{\left(\frac12 - \frac{\varepsilon}{\delta} (\sin \theta-\cos\theta), \varepsilon \cos \theta, x_3, \frac12+\varepsilon \sin\theta\right),\,\theta\in [0,2\pi]\}, \\
 T_2&=\{\left(\frac12 - \frac{\varepsilon}{\delta} (\sin \theta + \cos\theta), \frac12+\varepsilon \cos \theta, \frac12+\varepsilon \sin\theta,x_4\right),\,\theta\in [0,2\pi]\}.
 \end{align*}

 \begin{proposition}\label{prop:cylinders} If we choose $\delta$ and $\varepsilon$ small enough,
 the cylinders $C_1,C_2$ and the tori $T_1,T_2$ satisfy the following:
 \begin{enumerate}
 \item $C_1,C_2\subset Y$, $T_1\subset \partial_{13} Y$, $T_2 \subset \partial_{14} Y$, 
 \item $C_1\cap C_2=\emptyset$, $C_1\cap T_2=\emptyset$, $C_2\cap T_1=\emptyset$, $T_1\cap T_2=\emptyset$,
 \item $C_1$ and $T_1$ intersect transversely in one point, and the same holds for $C_2$ and $T_2$,
 \item $C_1,C_2,T_1,T_2$ are Lagrangian,
 \item $\partial C_1\subset \partial Y$ consists of two circles, one contained $\bd_{12} Y$ and another in $\bd_{13} Y$, 
 $\partial C_2\subset \partial Y$ consists of two circles, one contained $\bd_{12} Y$ and another in $\bd_{14} Y$.
 \end{enumerate}
 \end{proposition}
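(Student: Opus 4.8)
The plan is to verify the five assertions by direct computation from the explicit parametrizations, organizing the work around which coordinate ranges and which $2$-form summands are involved; no part is conceptually hard, so the real task is bookkeeping. I would first record, for each surface, its coordinate picture: on $C_1$ one has $x_3\equiv 0$, $x_2=\delta(\tfrac12-2\varepsilon)(1-t)\in[0,\delta(\tfrac12-2\varepsilon)]$ and $x_4=\varepsilon+(\tfrac12-2\varepsilon)t\in[\varepsilon,\tfrac12-\varepsilon]$, and symmetrically on $C_2$ one has $x_4\equiv 0$, $x_3\in[\varepsilon,\tfrac12-\varepsilon]$, $x_2\in[\tfrac12-\delta(\tfrac12-2\varepsilon),\tfrac12]$; on $T_1$ one has the identity $|(x_2,x_4-\tfrac12)|=|(\varepsilon\cos\theta,\varepsilon\sin\theta)|\equiv\varepsilon$, and on $T_2$ the identity $|(x_2-\tfrac12,x_3-\tfrac12)|\equiv\varepsilon$. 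These two identities give at once $T_1\subset\partial_{13}Y$ and $T_2\subset\partial_{14}Y$. For $C_1$ the three defining inequalities of $Y$ follow from $|(x_3,x_4)|=|x_4|\ge\varepsilon$ (equality only at $t=0$), $|(x_2,x_4-\tfrac12)|\ge|x_4-\tfrac12|\ge\varepsilon$ (equality only at $t=1$) and $|(x_2-\tfrac12,x_3-\tfrac12)|\ge|x_3-\tfrac12|=\tfrac12>\varepsilon$; the analogue holds for $C_2$. This proves (1), and the equality cases show $\partial C_1$ is the $x_1$-circle $\{x_2=x_3=0,\ x_4=\varepsilon\}\subset\partial_{12}Y$ together with the $x_1$-circle $\{x_2=x_3=0,\ x_4=\tfrac12-\varepsilon\}\subset\partial_{13}Y$, and similarly for $\partial C_2$ with $\partial_{12}Y$ and $\partial_{14}Y$; this is (5).

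For (2) I would simply compare the ranges above: $C_1\cap C_2=\emptyset$ since $x_3=0$ on $C_1$ but $x_3\ge\varepsilon$ on $C_2$; $C_2\cap T_1=\emptyset$ since $x_4=0$ on $C_2$ but $x_4\ge\tfrac12-\varepsilon$ on $T_1$; and $C_1\cap T_2=\emptyset$, $T_1\cap T_2=\emptyset$ since $x_2\le\delta(\tfrac12-2\varepsilon)$ resp. $x_2\in[-\varepsilon,\varepsilon]$ on the first surface while $x_2\ge\tfrac12-\varepsilon$ on $T_2$, which is impossible once $\varepsilon<\tfrac14$ and $\delta<1$. For (3) I would solve $C_1\cap T_1$ head-on: matching $x_2$ and $x_4$ and setting $s=1-t$ gives $\cos\theta=\tfrac{\delta(\frac12-2\varepsilon)s}{\varepsilon}$ and $\sin\theta=-1-\tfrac{(\frac12-2\varepsilon)s}{\varepsilon}\le-1$, forcing $s=0$, hence $t=1$, $\theta=3\pi/2$, and then $x_3=0$ and $x_1=\tfrac12+\tfrac\varepsilon\delta$ are determined; so $C_1\cap T_1$ is a single point, and the same computation applies verbatim to $C_2\cap T_2$. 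Transversality I would check by extending $C_i$ slightly past $t=1$ to a surface $\hat C_i\subset\mathbb T^4$ and showing $T\hat C_i+TT_i$ fills $T\mathbb T^4$ at the intersection point: at $\theta=3\pi/2$ the $\theta$-tangent of $T_1$ is $\tfrac\varepsilon\delta\,\partial/\partial x_1+\varepsilon\,\partial/\partial x_2$ and the $t$-tangent of $\hat C_1$ is $-\delta(\tfrac12-2\varepsilon)\,\partial/\partial x_2+(\tfrac12-2\varepsilon)\,\partial/\partial x_4$, so together with $\partial/\partial x_1\in T\hat C_1$ and $\partial/\partial x_3\in TT_1$ all four coordinate directions are hit.

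The heart of the argument is (4), the Lagrangian condition for $\omega=dx_1\wedge dx_2+dx_3\wedge dx_4+dx_2\wedge dx_3+\delta\,dx_1\wedge dx_4+dx_2\wedge dx_4-\delta\,dx_1\wedge dx_3$. Pulling $\omega$ back to $C_1$, where $dx_3=0$, $dx_2=-\delta(\tfrac12-2\varepsilon)\,dt$ and $dx_4=(\tfrac12-2\varepsilon)\,dt$, only $dx_1\wedge dx_2$ and $\delta\,dx_1\wedge dx_4$ survive, and they sum to $\bigl(-\delta(\tfrac12-2\varepsilon)+\delta(\tfrac12-2\varepsilon)\bigr)\,dx_1\wedge dt=0$; on $C_2$, where $dx_4=0$, the surviving terms $dx_1\wedge dx_2$ and $-\delta\,dx_1\wedge dx_3$ cancel in the same way. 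On $T_1$, all terms with two factors proportional to $d\theta$ vanish, leaving the $dx_3\wedge dx_4$ and $dx_2\wedge dx_3$ contributions $(\varepsilon\cos\theta+\varepsilon\sin\theta)\,dx_3\wedge d\theta$, which is cancelled exactly by $-\delta\,dx_1\wedge dx_3=\varepsilon(\cos\theta+\sin\theta)\,d\theta\wedge dx_3$; on $T_2$ the analogous cancellation uses instead $dx_3\wedge dx_4$, $\delta\,dx_1\wedge dx_4$ and $dx_2\wedge dx_4$. This is precisely where the coefficients $-\delta(\tfrac12-2\varepsilon)$ in the $C_i$ and $-\tfrac\varepsilon\delta$ with the sign patterns $\sin\theta\mp\cos\theta$ in the $T_i$ are forced.

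The step I expect to cost the most care is not any single computation but the simultaneous choice of small $\delta$ and $\varepsilon$: one must fix them so that $C_1,C_2\subset Y$, so that all of the range separations used in (2) are strict, so that the one intersection point found in (3) genuinely lies in $\overline Y$, and so that $\omega$ remains symplectic (i.e.\ $\omega^2>0$, the original reason for taking $\delta$ small). Each individual inequality is elementary, but assembling them consistently — and tracking the $x_1$-coordinate mod $1$ in $T_1,T_2$, where $\tfrac\varepsilon\delta$ is large — is the bulk of the write-up.
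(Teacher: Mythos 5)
Your proposal is correct and follows essentially the same route as the paper: a direct coordinate verification of containment and disjointness from the explicit parametrizations, locating the single transverse intersection by noting that the fourth coordinate forces $t=1$, $\theta=3\pi/2$, and checking the Lagrangian condition via the cancellations in the pullback of $\omega$. The only slip is cosmetic: the boundary circle of $C_1$ on $\partial_{12}Y$ (the $t=0$ end) sits at $x_2=\delta(\tfrac12-2\varepsilon)$, not $x_2=0$ (which occurs at the $t=1$ end), but this does not affect claim (5).
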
 

 \begin{proof} 
The proof is obtained by a straightforward check up. It is easy to see that all of them are Lagrangian. For instance,
for $T_1$, the tangent space is generated by $-\frac{\varepsilon}{\delta}(\cos \theta+\sin \theta)\frac{\bd}{\bd x_1} 
- \varepsilon \sin\theta
\frac{\bd}{\bd x_2} + \varepsilon\cos\theta \frac{\bd}{\bd x_4}$ and $\frac{\bd}{\bd x_3}$, and
 $$
 \omega\left(-\frac{\varepsilon}{\delta} (\cos \theta+\sin \theta) \frac{\bd}{\bd x_1} - \varepsilon \sin\theta
\frac{\bd}{\bd x_2} + \varepsilon\cos\theta \frac{\bd}{\bd x_4},\frac{\bd}{\bd x_3}\right)=0.
 $$
The torus $T_1\subset \partial_{13}Y$ since its coordinates satisfy $|(x_2,x_4-\frac12)|=\varepsilon$. Analogously
$T_2 \subset \partial_{14} Y$.

Now, $\bd C_1=\{(x_1,0,0,\varepsilon)\}\sqcup \{(x_1,-\delta(\frac12-2\varepsilon),0,\frac12-\varepsilon)\}\subset
 \bd_{12}Y \sqcup \bd_{13}Y$, and clearly $C_1\subset Y$ since $x_3=0$ assures that it is well away from $\nu(T_{14})$.
The statement for $C_2$ is similar.

It is clear that $C_1\cap C_2=\emptyset$. For $T_1\cap T_2=\emptyset$, it follows since they are in different boundary components.
Also $ C_1\cap T_2=\emptyset$ since $T_2\subset \bd_{14}Y$ and $\bd C_1\subset \bd_{12}Y\sqcup \bd_{13}Y$. 
Similarly, $C_2\cap T_1=\emptyset$.
To compute $C_1\cap T_1$, looking at the fourth coordinate we see that they coincide only 
for $t=1, \theta= - \pi/2$ so there is only one point in $C_1\cap T_1$. 
In the same way, $C_2\cap T_2$ consists of one point (looking at the third coordinate). 
It is easy to check that these intersections are transverse.  
  \end{proof}

\subsection{Second step: The symplectic manifold $Z$}\label{subsec:second}
The normal bundles of $T_{1j}\subset \TT^4$ are trivial. Therefore we can take three 
copies of the elliptic surface $E(1)$, call them $E(1)_2$, $E(1)_3$ and $E(1)_4$, with
generic fibers $F_2,F_3,F_4$, respectively, and form the Gompf symplectic sum
 \begin{equation}\label{eqn:Z}
 Z=\mathbb{T}^4\#_{T_{12}=F_2} E(1)_2\#_{T_{13}=F_3} E(1)_3\#_{T_{14}=F_4} E(1)_4\, .
 \end{equation}
Using Lemma \ref{lem:pi1}, we have that $\pi_1(Z)$ is isomorphic to the quotient of
$\pi_1(\TT^4)$  by the images of  $\pi_1(T_{12}), \pi_1(T_{13}), \pi_1(T_{14})$, hence
$Z$ is simply-connected.
Using the formula for the Euler characteristic of the Gompf symplectic sum in 
Subsection \ref{subsec:sympl-sum}, one obtains 
 $$\chi(Z)=36,\,b_2(Z)=34.$$

Now we are going to construct $34$ symplectic surfaces in $Z$. This will be done in several steps.
First, let us focus on the first Gompf symplectic sum $\mathbb{T}^4\#_{T_{12}=F_2} E(1)_2$. Call
$E(1)=E(1)_2$, $F=F_{12}$, $T=T_{12}$.  
By Proposition \ref{prop:9-blowup} there are $9$ sections $E_1,\ldots ,E_9$  
of $E(1)$ which are spheres of self-intersection numbers $(-1)$ intersecting $F$ transversely at
one point. By Lemma \ref{lem:glue}, we can glue them to (disjoint parallel copies of) $T_{34}$,
to get $S_1=E_1\# T_{34},\ldots, S_9=E_9 \# T_{34}$, which are disjoint symplectic tori of self-intersection $-1$.
Now take a generic line $L\subset E(1)$ provided by Proposition \ref{prop:9-blowup}, which intersects
$F$ in three points (and does not intersect any of the exceptional spheres $E_i$). 
This is a symplectic sphere which can be glued, by Lemma \ref{lem:glue}, to three
parallel copies of $T_{34}$, to get a symplectic surface $S_{10}=L\# 3T_{34}$ of genus $3$ and self-intersection $1$,
which is moreover disjoint from all the previous ones.

When doing the second and third Gompf symplectic sums in (\ref{eqn:Z}), we construct similar collections
$S_{11},\ldots, S_{19}, S_{20}$ and $S_{21},\ldots, S_{29}, S_{30}$ of symplectic surfaces in $Z$, so that 
\begin{itemize}
\item $g(S_1)=\ldots =g(S_9)=1$, $g(S_{11})=\ldots =g(S_{19})=1$, $g(S_{21})=\ldots =g(S_{29})=1$,
$g(S_{10})= g(S_{20})= g(S_{30})=3$.
\item $S_k\cdot S_k=-1$, $1\leq k\leq 9$, $S_{10}\cdot S_{10}=1$, $S_{10+k}\cdot S_{10+k}=-1$, $1\leq k\leq 9$, 
$S_{20}\cdot S_{20}=1$,  
$S_{20+k}\cdot S_{20+k}=-1$, $1\leq k\leq 9$, $S_{30} \cdot S_{30}=1$.
\end{itemize}
All of them are disjoint since for constructing $S_{10+k}$, $k=1,\ldots, 10$, we glue with parallel copies of $T_{24}$,
and for constructing $S_{20+k}$, $k=1,\ldots, 10$, we glue with parallel copies of $T_{23}$.
We can arrange as many copies as we wish of $T_{34}, T_{24}, T_{23}$ which do not intersect.

The four remaining surfaces are constructed as follows. 
Consider the (Lagrangian) cylinders $C_1$,$C_2$ and tori $T_1,T_2$ from Proposition \ref{prop:cylinders}. 
Recall that they are contained in $Y$, so they are disjoint with the tori $T_{1j}$, $j=2,3,4$. Moreover, we can take 
collections of parallel copies of  $T_{34}, T_{24}, T_{23}$ which
do not intersect any of $C_1$,$C_2, T_1,T_2$. Therefore we can assume that $C_i$ and $T_i$ 
are disjoint from $S_1,\ldots,S_{30}$ in $Z$. 

We use the cylinder $C_i$ to construct Lagrangian spheres in $Z$ as follows. The boundary of $\bd C_1$ in 
$\bd_{12}Y$ is a circle $\gamma$. We arrange the identification $\bd( E(1)_2-\nu(F_2))=F_2 \x S^1 \cong \bd_{12} Y=
T_{12} \x S^1$ to match this circle with a vanishing cycle of the elliptic fibration $E(1)_2$ (see Subsection \ref{subsec:elliptic}).
Let $V$ be the vanishing disk in $E(1)_2$, which is a Lagrangian $(-1)$-disk. This can be glued to $C_1$ to obtain
a Lagrangian submanifold $V\cup_{\gamma} C_1$ of self-intersection $-1$. To make the gluing smooth, we
may need to change the gluing in the Gompf connected sum as follows: the gluing region is a neighbourhood of
$Y=F \x S^1$ of the form $F \x S^1 \x (-\epsilon,\epsilon)$, where the symplectic form is $\omega_F +  d\theta\wedge d t$, 
and the Lagrangian has tangent space at is spanned by $\gamma'$ and a vector $a\frac{\bd}{\bd\theta}+b\frac{\bd}{\bd t}$.
A diffeomorphism of the form $(\theta,s) \mapsto (\theta+g(s),s)$ can serve to arrange $a=0$, so that the Lagrangian
enters the gluing region in the radial direction and thus can be glued without corner.
Finally, gluing the other boundary component of $\bd C_1$ with a vanishing disk in $E(1)_{3}$, 
we get a Lagrangian $(-2)$-sphere $L_1$.
This intersects $T_1$ transversely at one point.

In a similar way we obtain another pair $L_2,T_2$ of a Lagrangian $(-2)$-sphere and Lagrangian torus of self-intersection $0$, both
intersecting transversely at one point. We can arrange that $L_1,L_2$ are disjoint, because by Proposition
\ref{prop:9-blowup} we can choose two different vanishing cycles (hence disjoint) in $E(1)_{2}$, to match
the two boundary components of $C_1,C_2$ in $\bd_{12}Y$, which are homologous cycles.

Looking at the intersection form, we see that the $34$ surfaces $S_1,\ldots, S_{30}$ and $L_1,L_2,T_1,T_2$ are independent
in homology, hence they span $H_2(Z,\QQ)$. Finally, we apply Lemma \ref{lemma:lagr-sympl} to change slightly the
symplectic form so that all these Lagrangian surfaces become symplectic. 
Moreover, the proof of Lemma \ref{lemma:lagr-sympl} shows that we can deform the symplectic 
form so that both pairs $(L_1,T_1)$ and $(L_2,T_2)$ intersect positively, so we assume this.

\subsection{Making all symplectic surfaces disjoint} \label{subsec:last-step}
To make the surfaces in $Z$ disjoint we have to do the following process with both pairs $L_1,T_1$ and $L_2,T_2$. 
Let $L,T$ a pair of a symplectic sphere and a symplectic torus with $L\cdot L=-2,L\cdot T=1, T\cdot T=0$. Take
a parallel copy of $T$, call it $T'$, displacing via the normal bundle. Resolve the intersection point 
$T'\cap L$ with the process of Subsection \ref{subsec:resolution} to get a torus $T''$ homologous to $T'+L$.
Hence $T''\cdot T'' = (T+L)^2=0$ and $T'' \cdot T= (T+L)\cdot T=1$. 
Therefore $T$ and $T''$ intersect  at one point, say $p$. Locally, the model around $p$ is determined by the equation $z\cdot w=0$, 
where $T=\{z=0\}$, and $T''=\{w=0\}$. Consider $T+T''$ and resolve 
the singularity producing a symplectic genus $2$ surface $\Sigma$. We move it to intersect $T$ and $T''$ 
in the same point $p$. Locally, it is the same as to write down the equation 
$(z-\varepsilon)\cdot (w-\varepsilon)=\varepsilon^2$. The equalities
  $$
   \Sigma\cdot T=(T+T'')\cdot T=1,\,\Sigma\cdot T''=(T+T'')\cdot T''=1,
   $$
show that $p$ is the only intersection point of the three surfaces $T,T'',\Sigma$, and that they intersect 
transversely. Moreover, $\Sigma^2=(T+T'')^2=2$. Blowing up at $p$ we get a symplectic 
manifold $\tilde Z=Z\# \overline{\mathbb{C}P^2}$, where the proper transforms  $\tilde T,\tilde T'',\tilde\Sigma$ 
are disjoint symplectic surfaces of genus
$1,1,2$ and self-intersection numbers $-1,-1,1$ (see Subsection \ref{subsec:blowup}). 
They generate the same $3$-dimensional space in homology, 
as $T,T''$ and the exceptional sphere $E\in H_2(\tilde Z,\ZZ)$. 

Using this method for both pairs $L_1,T_1$ and $L_2,T_2$, we end up with the symplectic manifold
$X=Z\#2\overline{\mathbb{C}P^2}$, with $b_2(X)=36$, and with $36$ disjoint symplectic surfaces $S_1,\ldots, S_{30}, 
\tilde T_1,\tilde T''_1,\tilde\Sigma_1, \tilde T_2,\tilde T''_2,\tilde\Sigma_2$. This forces that these $36$ surfaces generate 
the homology of $X$. The genus and self-intersections of the surfaces are those stated in Theorem \ref{thm:generators}.
This finishes the proof.

\begin{corollary} \label{cor:last}
Take a prime $p$, and $g_i=g(S_i)$ as given in Theorem \ref{thm:generators}. Then
there is a $5$-dimensional K-contact manifold $M$ with $H_1(M,\ZZ)=0$ and 
 $$
 H_2(M,\ZZ)= \ZZ^{35} \oplus \bigoplus_{i=1}^{36} (\ZZ/p^{i})^{2g_i}\, .
 $$
\end{corollary}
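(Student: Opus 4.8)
The strategy is to feed the symplectic $4$-manifold $X$ from Theorem \ref{thm:generators} into the machinery of Sections \ref{sec:2}--\ref{sec:4}, producing a Seifert bundle $M\to X$ whose total space is the desired K-contact $5$-manifold. First I would assign to the $36$ disjoint symplectic surfaces $S_1,\dots,S_{36}$ the multiplicities $m_i=p^{i}$, $i=1,\dots,36$. Since the $S_i$ are pairwise disjoint, the condition $\gcd(m_i,m_j)=1$ for intersecting surfaces is vacuous, so Proposition \ref{prop:orb->symp} applies and turns $X$ into a smooth symplectic orbifold with isotropy surfaces $S_i$ of multiplicity $p^{i}$. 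Then Lemma \ref{lem:admits-aK} gives an orbifold almost K\"ahler structure, so the hypotheses of Theorem \ref{thm:k-contact} will be in place once we have a Seifert bundle with the right Chern class.

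Next I would invoke Lemma \ref{lem:omeg} to build the Seifert bundle $\pi:M\to X$. Part (1) of that lemma produces a semi-regular Seifert bundle with $c_1(M/X)=[\hat\omega]$ for an orbifold symplectic form $\hat\omega$; combined with Theorem \ref{thm:k-contact} this already makes $M$ a semi-regular K-contact manifold with $d\eta=\pi^*\hat\omega$. To control the topology of $M$ via Theorem \ref{thm:Kollar} I need the three conditions there: $H_1(X,\ZZ)=0$ holds because $X$ is simply connected; the surjectivity $H^2(X,\ZZ)\to\bigoplus H^2(S_i,\ZZ/p^{i})$ follows because the classes $[S_i]$ generate $H_2(X,\ZZ)\cong H^2(X,\ZZ)$ (Poincar\'e duality) and one can choose $[b_i]$ with $[b_i]\cdot[S_j]=\delta_{ij}$ up to the relevant prime power, using disjointness and independence of the $[S_i]$ — this is the same linear-algebra argument as in Corollary \ref{cor:Kollar}; and primitivity of $c_1(M/\mu)$ is arranged by part (2) of Lemma \ref{lem:omeg}, whose hypotheses $b_2(X)=36\geq 3$ and primitivity of $\sum (b_im/m_i)[S_i]$ I must check (the latter again reduces to the existence of dual classes, hence to the independence of the $[S_i]$ in $H_2(X,\ZZ)$).

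With all three conditions of Theorem \ref{thm:Kollar} verified, that theorem gives $H_1(M,\ZZ)=0$ and
\[
 H_2(M,\ZZ)=\ZZ^{k}\oplus\bigoplus_{i=1}^{36}(\ZZ/p^{i})^{2g_i},\qquad k+1=b_2(X)=36,
\]
so $k=35$, which is exactly the claimed homology. The main obstacle I anticipate is the bookkeeping needed to simultaneously satisfy conditions (2) and (3): one must choose the orbit invariants $j_i$ (equivalently the $b_i$ with $j_ib_i\equiv 1\ (\mathrm{mod}\ p^{i})$, which is automatic since $\gcd(j_i,p^{i})=1$ can be arranged) and a line bundle $B$ so that $c_1(M/\mu)$ is primitive while the surjectivity onto $\bigoplus H^2(S_i,\ZZ/p^{i})$ is preserved; fortunately Lemma \ref{lem:omeg}(2) packages precisely this, so the real work is just confirming that the generating property of $\{[S_i]\}$ yields both the primitivity of $\sum(b_im/m_i)[S_i]$ and the surjectivity statement. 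Everything else — smoothness of the orbifold, semi-regularity of the Seifert bundle, and the K-contact conclusion — is immediate from the cited results.
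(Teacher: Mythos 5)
Your proposal is correct and follows essentially the same route as the paper: multiplicities $m_i=p^i$ on the disjoint $S_i$, Proposition \ref{prop:orb->symp} and Lemma \ref{lem:admits-aK} to get an almost K\"ahler orbifold, Lemma \ref{lem:omeg} plus Theorem \ref{thm:k-contact} for the K-contact Seifert bundle, and Theorem \ref{thm:Kollar} for the homology. The only cosmetic difference is in verifying surjectivity onto $\bigoplus H^2(S_i,\ZZ/p^i)$: the paper uses directly that $[S_i]\mapsto S_i^2=\pm1\pmod{p^i}$, whereas you invoke dual classes via the fact that the $[S_i]$ generate $H_2(X,\ZZ)$ — equivalent arguments.
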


\begin{proof}
 Consider the symplectic manifold $(X,\omega)$ provided by Theorem \ref{thm:generators},
and let $S_i$, $1\leq i\leq 36$, be the collection of disjoint symplectic surfaces. Put coefficients
$m_i=p^{i}$ for $S_i$. Using Proposition \ref{prop:orb->symp}, we give $X$ the structure of
a symplectic orbifold with isotropy surfaces $S_i$ of multiplicities $m_i$. By Lemma \ref{lem:admits-aK}, 
$X$ admits an almost K\"ahler orbifold structure. 
Lemma \ref{lem:omeg}
implies that there exists a Seifert bundle $M\to X$ such that $c_1(M/X)=[\omega]$, and
by Theorem \ref{thm:k-contact}, $M$ admits a K-contact structure.

We compute the homology of $M$ using Theorem \ref{thm:Kollar}. As $X$ is simply connected,
$H_1(X,\ZZ)=0$. By Lemma \ref{lem:omeg}, we can arrange
that $c_1(M/\mu)\in H^2(X,\ZZ)$ is primitive. Now $k+1=b_2(X)=36$, so $H^2(X,\ZZ)=\ZZ^{36}$. The map
$H^2(X,\ZZ)\to H^2(S_i,\ZZ)$ sends $[S_j]$ to zero, $j\neq i$, since all $S_i$ are disjoint.
It sends $[S_i]$ to $S_i^2$, hence $H^2(X,\ZZ)\to H^2(S_i,\ZZ/p^{e_i})$ sends $[S_i]$ to $S_i^2 \pmod{p^{i}}$.
Given the self-intersection numbers in Theorem \ref{thm:generators}, this is $+1$ or $-1$. So 
 $$
  H^2(X,\ZZ) \to \sum H^2(S_i,\ZZ/p^{i} )
 $$
is surjective. Hence $H_1(M,\ZZ)=0$. The result follows. 
\end{proof}

 \begin{remark} \label{rem:suggestion}
 The manifold $M$ of Corollary \ref{cor:last} does not admit a regular K-contact structure.
This follows from Remark \ref{rem:e2} since $H_2(M,\ZZ)$ has torsion.
\end{remark}

%%%%%%%%%%%%%%%%%%%%%%%%%%%%%%%%%%%%%%%%%%%%%%%%%%%%%%%%%%%%%%%
\section{K\"ahler surfaces with many disjoint complex curves}\label{sec:6}
%%%%%%%%%%%%%%%%%%%%%%%%%%%%%%%%%%%%%%%%%%%%%%%%%%%%%%%%%%%%%%%

Now we want to find obstructions for the existence of Sasakian $5$-dimensional manifolds. In particular, we aim
to prove that the $5$-manifold constructed in the previous section, which admits a K-contact structure, cannot
admit a Sasakian structure.

The proof of Theorem \ref{thm:main} follows from Corollary \ref{cor:last} and the following: 

\begin{proposition}
Let $M$ be a $5$-dimensional manifold with $H_1(M,\ZZ)=0$ and 
 $$
 H_2(M,\ZZ)= \ZZ^{35} \oplus \bigoplus_{i=1}^{36} (\ZZ/p^{i})^{2g_i}\, .
 $$
where $g_i=g(S_i)$ are the numbers given in Theorem \ref{thm:generators}, and $p$ is a prime number.
Then $M$ does not admit a semi-regular Sasakian structure.
\end{proposition}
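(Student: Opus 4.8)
The plan is to argue by contradiction: suppose $M$ admits a semi-regular Sasakian structure. By Theorem \ref{thm:otro}, $M$ is then a semi-regular Seifert bundle over a K\"ahler cyclic orbifold $X$, and since semi-regularity means $X$ is a topological manifold, Proposition \ref{prop:locus-orb-K} tells us that $X$ is in fact a smooth compact K\"ahler surface whose isotropy locus is a disjoint-or-transverse union of smooth complex curves $D_i$. Now apply Corollary \ref{cor:Kollar} with $k=35$: the hypotheses on $H_1(M)$ and $H_2(M)$ are exactly met, so we conclude that $H_1(X,\ZZ)=0$, $H_2(X,\ZZ)=\ZZ^{36}$, and the ramification locus consists of exactly $36$ \emph{disjoint} complex curves $D_1,\ldots,D_{36}$, with $g(D_i)=g_i$, where the $g_i$ are the genera from Theorem \ref{thm:generators}. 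Reading off that list, $34$ of the $D_i$ have genus $1$ and two have genus $2$; in particular every $D_i$ has genus $g_i\geq 1$, and $b_2(X)=36\geq 2$.

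This is precisely the configuration forbidden by the conjecture stated in the introduction, and the point is that the special numerical features here — all genera lie in the range $1\le g_i\le 3$, and the self-intersection numbers $D_i^2$ are $-1$ or $+1$ — are exactly what is needed to invoke Theorem \ref{thm:4g+5}. So the second step is simply to quote Theorem \ref{thm:4g+5}: a K\"ahler surface with $b_1=0$ cannot carry $b_2$ disjoint complex curves all of genus $g\geq 1$ under the hypotheses fitting our curves (genus $\le 3$, the given self-intersections). Since $X$ is such a surface with $b_1(X)=0$ and $b_2(X)=36$, and the $D_i$ are $36$ disjoint complex curves all of genus $\geq 1$ satisfying those constraints, Theorem \ref{thm:4g+5} yields a contradiction. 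Hence no semi-regular Sasakian structure on $M$ exists.

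The only real subtlety — and the step I expect to need the most care — is verifying that the constraints of Theorem \ref{thm:4g+5} are genuinely satisfied by the curves produced by Corollary \ref{cor:Kollar}: one must check that the genus bound ($g_i\le 3$) and the self-intersection pattern dictated by Theorem \ref{thm:generators} match the hypotheses of that theorem, and that disjointness (which Corollary \ref{cor:Kollar} delivers from the prime-power coefficients $p^i$ being pairwise incompatible) is correctly transported through the identification of $X$ as a smooth K\"ahler surface. The rest is a direct chain of citations: Theorem \ref{thm:otro} $\Rightarrow$ Proposition \ref{prop:locus-orb-K} $\Rightarrow$ Corollary \ref{cor:Kollar} $\Rightarrow$ Theorem \ref{thm:4g+5}.

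\begin{proof}
Suppose, for contradiction, that $M$ admits a semi-regular Sasakian structure. By Theorem \ref{thm:otro}, $M$ is a semi-regular Seifert bundle $\pi:M\to X$ over a K\"ahler cyclic orbifold $X$. Semi-regularity means $X$ is smooth as an orbifold, hence a topological manifold, and by Proposition \ref{prop:locus-orb-K}, $X$ is a smooth compact complex surface, in fact K\"ahler, whose isotropy surfaces $D_i$ are smooth complex curves intersecting transversely. Since $H_1(M,\ZZ)=0$ and $H_2(M,\ZZ)=\ZZ^{35}\oplus\bigoplus_{i=1}^{36}(\ZZ/p^i)^{2g_i}$, Corollary \ref{cor:Kollar} (with $k=35$) applies: we get $H_1(X,\ZZ)=0$, $H_2(X,\ZZ)=\ZZ^{36}$, and the ramification locus consists of exactly $36$ pairwise disjoint curves $D_1,\ldots,D_{36}$ with $g(D_i)=g_i$.

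By Theorem \ref{thm:generators}, the genera $g_i$ satisfy $1\le g_i\le 3$ for all $i$, and the self-intersection numbers $D_i^2$ are those listed there (each equal to $-1$ or $+1$). Thus $X$ is a K\"ahler surface with $b_1(X)=0$, $b_2(X)=36$, carrying $36$ disjoint complex curves all of genus $\ge 1$ with the self-intersections of Theorem \ref{thm:generators}. This contradicts Theorem \ref{thm:4g+5}. Therefore $M$ admits no semi-regular Sasakian structure.
\end{proof}
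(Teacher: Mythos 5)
Your proof follows the same route as the paper's: semi-regular Sasakian $\Rightarrow$ Seifert bundle over a smooth K\"ahler surface (Theorem \ref{thm:otro} plus Proposition \ref{prop:locus-orb-K}), then Corollary \ref{cor:Kollar} with $k=35$ to extract $36$ disjoint complex curves of genera $g_i$, then Theorem \ref{thm:4g+5} for the contradiction; the chain of citations is identical. Two inaccuracies are worth flagging, though neither is fatal. First, in your outline you claim $34$ of the curves have genus $1$ and two have genus $2$; you have forgotten the three genus-$3$ curves $S_{10},S_{20},S_{30}$ --- the correct count is $31$ of genus $1$, two of genus $2$ and three of genus $3$, which still satisfies the hypotheses of Theorem \ref{thm:4g+5} (at least two $g_i>1$, and $\max g_i\leq 3$). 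Second, you assert that the self-intersection numbers $D_i^2$ of the curves in the K\"ahler base $X$ are those listed in Theorem \ref{thm:generators}; this is not deducible. Corollary \ref{cor:Kollar} recovers the genera from the torsion of $H_2(M,\ZZ)$, but it says nothing about the self-intersections of the $D_i$ in the hypothetical K\"ahler surface, which need not agree with those of the symplectic model used to build $M$. Fortunately Theorem \ref{thm:4g+5} imposes no hypothesis on self-intersections (they are constrained internally in its proof via the signature of $H^{1,1}$), so the argument goes through; the ``subtlety'' you single out is in fact a non-issue. It would also be cleaner to state the contradiction explicitly: Theorem \ref{thm:4g+5} gives $b\leq 2g+3=9$, whereas $b=36$.
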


\begin{proof}
Let $M$ be a $5$-dimensional manifold with $H_1(M,\ZZ)=0$ which admits a Sasakian structure. Then it also admits
a quasi-regular Sasakian structure. This means that $M$ is a Seifert bundle over a K\"ahler orbifold $\pi:M\to X$,
by Theorem \ref{thm:otro}. By Corollary \ref{cor:Kollar}, $H_1(X,\ZZ)=0$, $H_2(X,\ZZ)=\ZZ^{36}$ and 
the ramification locus contains a collection of $36$ disjoint surfaces $D_i$ with $g(D_i) = g_i$. 

If the Sasakian structure is semi-regular, then $X$ is a smooth K\"ahler manifold. By Proposition 
\ref{prop:locus-orb-K}, the ramification locus consists of smooth K\"ahler curves which span the second homology (again, see Corollary \ref{cor:Kollar}). We see in Theorem \ref{thm:4g+5}
below that this is not possible.
\end{proof}

A smooth K\"ahler manifold with  disjoint complex curves spanning its second homology is a rare phenomenom. 
We have the following first result in this direction.

\begin{theorem} \label{thm:4g+5}
Let $S$ be a smooth K\"ahler surface with $H_1(S,\QQ)=0$ and 
containing $D_1,\ldots, D_b$, $b=b_2(S)$, smooth disjoint complex curves with $g(D_i)=g_i>0$,
and spanning $H_2(S,\QQ)$. Assume that: 
 \begin{itemize}
 \item at least two $g_i$ are bigger than $1$,
 \item $g=\max\{g_i\}\leq 3$.
 \end{itemize}
Then $b\leq 2g+3$.
\end{theorem}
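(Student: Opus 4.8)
The plan is as follows.

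\textbf{Reduction.} Since the curves are pairwise disjoint, $D_i\cdot D_j=0$ for $i\neq j$, so $[D_1],\dots,[D_b]$ is an orthogonal system in $H^2(S;\QQ)$; as there are $b=b_2(S)$ of them and they are linearly independent, they form a basis, and since each $[D_i]$ has type $(1,1)$ we get $H^2(S)=H^{1,1}(S)$, hence $p_g=q=0$ and $b_2^+(S)=1$. The intersection form being non-degenerate and diagonal in this basis with signature $(1,b-1)$, exactly one curve, say $D_1$, has $D_1^2=d>0$, while $D_i^2=-n_i$ with $n_i\geq1$ for $i\geq2$. Adjunction gives $K_S\cdot D_i=2g_i-2-D_i^2$, and Noether's formula (with $e(S)=b+2$ and $\chi(\cO_S)=1$) gives $K_S^2=10-b$.

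\textbf{Key inequality.} Expanding $K_S=\sum_i a_i[D_i]$, so that $a_i=(K_S\cdot D_i)/D_i^2$, one gets
$$K_S^2=\frac{(2g_1-2-d)^2}{d}-\sum_{i=2}^{b}\frac{(2g_i-2+n_i)^2}{n_i}=10-b .$$
Each term of the sum is $\geq 2g_i-2+n_i\geq 2g_i-1\geq1$, and is $\geq 8$ whenever $g_i\geq2$ (by AM--GM, $n_i+(2g_i-2)^2/n_i\geq2(2g_i-2)$). Since at least two of the genera exceed $1$, at least one of $D_2,\dots,D_b$ has genus $\geq2$; substituting into the identity above yields
$$\frac{(2g_1-2-d)^2}{d}\;\geq\;16 .$$
As $2g_1-2\leq 2g-2\leq 4$, this is a severe constraint: it forces either $d\leq 2g_1-2$ with $(2g_1-3)^2\geq16$ (impossible, since the left side is $\leq9$) or $d\geq16$.

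\textbf{Kodaira dimension $\geq0$.} If $\kappa(S)\geq0$ then $K_S$ is pseudo-effective; since $D_1$ is an irreducible curve with $D_1^2>0$ it is nef, so $K_S\cdot D_1\geq0$, i.e.\ $1\leq d\leq 2g_1-2\leq4$. On this range $d\mapsto(2g_1-2-d)^2/d$ is maximal at $d=1$ with value $(2g_1-3)^2\leq9<16$, contradicting the key inequality. Hence $\kappa(S)=-\infty$, and since $b_1(S)=0$ the surface $S$ is rational.

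\textbf{The rational case, which I expect to be the main obstacle.} Now the key inequality forces $d=D_1^2\geq16$, and I must rule out a nef and big curve $D_1$ of self-intersection $\geq16$ and genus $\leq3$ on a rational surface. Riemann--Roch gives $h^0(\cO_S(D_1))\geq d-g_1+2$ (here $h^2(\cO_S(D_1))=h^0(K_S-D_1)=0$, as $K_S-D_1$ cannot be effective on a rational surface), and since $D_1$ is reduced and irreducible $|D_1|$ has no fixed part; after resolving any base points, $|D_1|$ defines a birational morphism from a blow-up of $S$ onto a surface $S'\subset\PP^N$ whose general hyperplane section is a smooth curve of genus $g_1\leq3$ and whose $\Delta$-genus equals $g_1$. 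Since $S'$ is rational (so $h^1(\cO_{S'})=0$) and $g_1\geq1$, the classification of polarised surfaces with $\Delta$-genus $\leq3$ forces $\cO_{S'}(1)\cong\omega_{S'}^{-1}$, i.e.\ $S'$ is a del Pezzo surface, hence has only rational (ADE) singularities — incompatible with the fact that this morphism contracts the genus-$\geq1$ curves $D_i$, $i\geq2$. (For $b\leq10$ there is a shorter variant: $-K_S$ is then effective by Riemann--Roch and connected, because $0\to\cO_S(K_S)\to\cO_S\to\cO_{-K_S}\to0$ together with $h^0(K_S)=h^1(K_S)=0$ gives $H^0(\cO_S)\cong H^0(\cO_{-K_S})$; each $D_i$ with $i\geq2$ is then a component of $-K_S$ since $-K_S\cdot D_i<0$, and comparing $p_a(-K_S)=1$ with the genera $g_i$, while controlling multiplicities, bounds $b$.) Either way one reaches a contradiction, so no such configuration exists and in particular $b\leq 2g+3$.
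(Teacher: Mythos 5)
The first part of your argument is correct and even sharper than the paper's own opening: the orthogonal decomposition $K_S=\sum (K_S\cdot D_i/D_i^2)\,[D_i]$, the identity $K_S^2=10-b$, the estimate $(2g_1-2-d)^2/d\geq 16$ (the paper only extracts $\geq 10$), and the elimination of $\kappa(S)\geq 0$ via nefness of $D_1$ are all fine. At this point the paper turns elsewhere: it deduces $m_1=D_1^2\geq 2g_1+3$, uses $m_1\geq 2g_1-1$ to make $\cO(D_1)|_{D_1}$ very ample, lifts a section to a Lefschetz pencil through $m_1$ points of $D_1$, blows up to obtain a relatively minimal genus-$g_1$ fibration over $\PP^1$ whose fibres contain the remaining $D_i$, and applies Xiao's slope inequality $\lambda_\pi\geq 4-4/g$; this is what yields the quantitative bound $b\leq 2g+3$ rather than outright non-existence.

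The gap is in your rational case, which you yourself flag as the main obstacle. Concretely: (i) the classification of polarized surfaces of $\Delta$-genus $\leq 3$ does \emph{not} force $\cO_{S'}(1)\cong\omega_{S'}^{-1}$ --- for instance $(\PP^2,\cO_{\PP^2}(4))$ has degree $16$, sectional genus $3$ and $\Delta$-genus $3$ and is not a del Pezzo polarization --- so you cannot conclude that $S'$ has only rational (ADE) singularities; (ii) you have not established that $\phi_{|D_1|}$ is birational onto its image, that $|D_1|$ is base-point free, or that $h^1(\cO_S(D_1))=0$, all of which are needed even to pin $\Delta(S',\cO(1))$ down to exactly $g_1$ rather than $\leq g_1$; and (iii) without rationality of the singularities of $S'$ there is no contradiction in contracting curves of positive genus --- normal projective rational surfaces with elliptic singularities, obtained by contracting an elliptic curve of negative self-intersection, do exist. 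The parenthetical variant via effectivity of $-K_S$ requires $K_S^2\geq 0$, i.e.\ $b\leq 10$, whereas the whole point is to exclude large $b$. Finally, note that your argument, if completed, would show that \emph{no} such configuration exists for any $b$; that is precisely the conjecture stated in the paper's introduction, which the authors present as open and replace by the weaker bound $b\leq 2g+3$. This is a strong signal that the rational case cannot be dispatched by the route you sketch, and that some quantitative input such as the slope inequality is genuinely needed.
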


\begin{proof}
First, it is clear that $[D_1],\ldots ,[D_b]$ are a basis of $H^2(S,\QQ)$. As these are classes of type
$(1,1)$, we have that $h^{1,1}=b$ and geometric genus $p_g=h^{2,0}=0$. The irregularity is
$q=h^{1,0}=0$ since $b_1=0$. Therefore Noether's formula \cite{BPV} says that 
 $$
 \frac{1}{12}(K_S^2+c_2(S)) = \chi(\cO_S)=1-q+p_g=1.
 $$
Note that $c_2(S)=\chi(S)=2+b$, since $b=b_2$ and $b_1=b_3=0$. Therefore
$K_S^2=10-b$, where $K_S$ is the canonical divisor of $S$.

By the Riemann-Hodge relations, the signature of $H^{1,1}(S)$ is $(1,b-1)$. Therefore, we
can suppose $D_1^2=m_1$, $D_i^2=-m_i$, $i=2,\ldots, b$, where all $m_i$ are positive integer numbers.
By the adjunction equality, we have
 $$
 K_S\cdot D_i+D_i^2 =2 g_i-2,
 $$
so $K_S\cdot D_i=2g_i-2-D_i^2$, and hence
 $$
K_S= \sum_{i=1}^b \frac{2g_i-2-D_i^2}{D_i^2} D_i
 $$
and
 $$
K_S^2= \sum_{i=1}^b  \frac{(2g_i-2-D_i^2)^2}{D_i^2}. 
 $$
For $i\geq 2$, we have 
 $$
  \frac{(2g_i-2-D_i^2)^2}{D_i^2} = - \frac{(2g_i-2+ m_i )}{m_i} (2g_i-2+m_i) \leq -(2g_i-2+m_i) \leq  -1,
$$
since $2g_i-2\geq 0$. Then
 $$
 10-b =K_S^2  \leq  \frac{(2g_1-2-D_1^2)^2}{D_1^2} -(b-1).
 $$
With the hypothesis that at least one $g_i$, $i\geq 2$, satisfies that $g_i>1$, we have an strict inequality. So
 $$
 \frac{(2g_1-2-m_1)^2}{m_1} \geq 10.
 $$ 
This is rewritten as $m_1^2-(4g_1+6)m_1+4(g_1-1)^2 \geq 0$. Hence
  $$
  m_1\geq 2g_1+3 +\sqrt{20g_1+5} \qquad \text{or} \qquad
  m_1\leq 2g_1+3 -\sqrt{20g_1+5}.
 $$
For $g_1\leq 3$, we have that the second inequality is impossible (since $m_1\geq 1$). Hence
$m_1\geq 2g_1+3$.

Now we have that there is a curve $D_1$ of genus $g_1$ with self-intersection $D_1^2\geq 2g_1-1$. Take
the line bundle $L=\cO(D_1)$. This has $m_1=\deg( L|_{D_1})\geq 2g_1-1$, so $L|_{D_1}$ is very ample.
In particular, there is a section $s\in H^0(L|_{D_1})$ vanishing exactly at $m_1$ distinct points $Z\subset D_1$. The 
long exact sequence in cohomology associated to $0\to \cO \to L\to L|_{D_1}\to 0$, together with the fact that 
$H^1(\cO)=H^{0,1}(S)=0$, gives an exact sequence
 $$
  0 \to \CC \to H^0(L) \to H^0(L|_{D_1}) \to 0,
 $$
Take the preimage of the section $s \in H^ 0(L|_{D_1})$. This is a $2$-dimensional subspace of
$H^0(L)$. It gives a Lefschetz pencil $\PP^1 \subset \PP(H^0(L))$ of sections whose
zero sets are curves going through $Z$. Blow-up $Z$ to get a smooth complex surface $\tilde S$ and a Lefschetz fibration
 $$
 \pi:\tilde S  \too \PP^1
$$
with the proper transform of $D_1$, say $C_1=\tilde D_1$ as one smooth fiber of genus $g_1$. The other
$D_i$, $2\leq i\leq b$, are not touched by the blow-up loci, so we do not change their name.

Now let $E_j$, $j=1,\ldots, m_i$, be the exceptional divisors of the blow-up map $\tilde S\to S$. These are sections
of $\pi$. Note that $C_1, E_1,\ldots, E_{m_1}, D_2,\ldots, D_b$ are a basis of $H_2(\tilde S,\QQ)$.
Since $D_j\cdot E_1 =0$, we have that $D_j$ is contained in a fiber for all $i=2,\ldots, b$. Note that
it follows that $g_i\leq g_1$, for $i\geq 2$, since the genus of a component of a singular fiber cannot be
bigger than the genus of the generic fiber. So $g=g_1$.

Let us see that $\pi$ is a relatively minimal fibration. This means that there are no $(-1)$-rational curves contained
in a fiber. Suppose that $B$ is such a curve. If $B$ intersects a section say $E_1$, then $B+E_1$ is a rational nodal
curve of self-intersection zero. This implies that there is a linear system of rational curves of self-intersection zero
and hence $\tilde S$ is ruled. 
Now note that there cannot be more than $g$ curves of genus $\geq 1$ in a fiber (since $g$ is the
genus of a generic fiber). If $b_2\leq g+1$ then automatically $b_2\leq 2g+3$. Otherwise 
$b_2\geq g+2$ and then there must be a curve $D_3$ not lying in the fiber of $B$. Then 
$(B+E_1)\cdot D_3 =0$, and $D_3$ has positive genus. 
This is not possible (a
curve of positive genus survives in a minimal model of $\tilde S$, hence it should be intersected by the ruling). 

Suppose that $B$ does not intersect any section. Then $B$ is contained in a fiber. If $B$ does not intersect any
$D_j$ then it is homologically trivial. Suppose it intersect some $D_k$ in some fiber $F$. Let $F_1,\ldots, F_k$ be
the irreducible components of $F$. By \cite[(III.8.2)]{BPV}, the span of $\la F_1,\ldots, F_k\ra$ 
has dimension $k$, and subject to the only relation $C_1=F=\sum a_i F_i$, for some $a_i$. Removing
the components that do intersect the exceptional divisors, the rest of the components, together with the $D_i$
and the $E_j$, should be independent. Therefore there cannot be more components not intersecting the $E_j$ than
those provided by the $D_k$ in the fiber, hence such $B$ does not appear.

Now, as in \cite{X} and \cite{Chen}, write 
 \begin{align*}
 K_{\tilde S/\PP^1}^2 &= K_{\tilde S}^2-8(g-1)(-1)=10-b-m_1+8g-8, \\
 \chi_\pi &= \chi(\cO_{\tilde S})-(g-1)(-1)=1+g-1=g, \\
 \lambda_\pi &=K_{\tilde S/\PP^1}^2  / \chi_\pi= (2-b-m_1+8g)/g.
 \end{align*}
By \cite{X}, for any relatively minimal fibration of genus $g\geq 2$, we have $4-4/g \leq \lambda_\pi\leq 12$.
The first inequality implies that $4g-4 \leq 2-b-m_1+8g \leq 2-b-(2g+3)+8g$ hence $b\leq 2g+3$.
\end{proof}

\begin{remark}
The proof of Theorem \ref{thm:4g+5} also works when we have all complex curves spanning the second homology of genus $g_i=1$. We only have to 
note that automatically $m_1 \geq 1$, and this is enough to construct a Lefschetz fibration.
\end{remark}

To extend the arguments of this paper to quasi-regular Sasakian manifolds (and hence to all Sasakian manifolds), we
need a version of Theorem \ref{thm:4g+5} that covers the case that $S$ is a cyclic K\"ahler orbifold. The argument should 
run as follows: desingularize each orbifold point (this is a Hirzebruch-Jung desingularisation \cite{BPV}), creating a
tree of rational curves of negative self-intersection, and bound $K_{\tilde S}^2$ for the desingularisation $\tilde S \to S$.
The authors have only managed to make this argument work for the case where all complex curves are of genus $g_i=1$.
Unfortunately, we have not been able to construct a symplectic manifold $X$ with $H_1(X,\ZZ)=0$ and $b=b_2(X)$
disjoint symplectic tori in $X$ spanning $H_2(X,\QQ)$.

\end{document}